\crefname{equation}{}{}
\definecolor{lightgray}{rgb}{0.9,0.9,0.9}
\definecolor{darkgray}{rgb}{0.5,0.5,0.5}
\newcommand\blfootnote[1]{%
  \begingroup
  \renewcommand\thefootnote{}\footnote{#1}%
  \addtocounter{footnote}{-1}%
  \endgroup
}
\declaretheorem{theorem}
\declaretheorem[sibling=theorem]{lemma}
\declaretheorem[sibling=theorem]{remark}
\declaretheorem[sibling=theorem]{fact}
\declaretheorem[sibling=theorem]{proposition}
\declaretheorem[sibling=theorem]{corollary}
\algnewcommand{\lst}{\texttt{lst}}
\algnewcommand{\slst}{\texttt{slst}}
\algnewcommand{\SEND}{\textbf{send}}
\newsavebox{\algleft}
\newsavebox{\algright}
\newcounter{algorithmicH}
\let\oldalgorithmic\algorithmic
\renewcommand{\algorithmic}{%
  \stepcounter{algorithmicH}
  \oldalgorithmic}
\renewcommand{\theHALG@line}{ALG@line.\thealgorithmicH.\arabic{ALG@line}}
\icmltitlerunning{Implicit Riemannian Optimism with Applications to Min-Max Problems}
\begin{document}
\twocolumn[

\icmltitle{Implicit Riemannian Optimism with Applications to Min-Max Problems}

\icmlsetsymbol{equal}{*}

\begin{icmlauthorlist}
\icmlauthor{Christophe Roux}{equal,zib,tub}
\icmlauthor{David Martínez-Rubio}{equal,zib,tub,uc3m}
\icmlauthor{Sebastian Pokutta}{zib,tub}
\end{icmlauthorlist}

\icmlaffiliation{zib}{Zuse Institute Berlin, Germany}
\icmlaffiliation{tub}{Technische Universität Berlin, Germany}
\icmlaffiliation{uc3m}{Carlos III University of Madrid, Spain}

\icmlcorrespondingauthor{Christophe Roux}{roux@zib.de}
\icmlcorrespondingauthor{David Martínez-Rubio}{martinez-rubio.zib.de}

\vskip 0.3in
]

\printAffiliationsAndNotice{\icmlEqualContribution}

\blfootnote{\color{darkgray}Most of the non-local notations in this work have a link to their definitions, using \href{https://damaru2.github.io/general/notations_with_links/}{this code}, such as ${\protect\hyperlink{def:riemannian_exponential_map}{\color{darkgray}\operatorname{Exp}_{x}}}$, which links to where this notation is defined as the exponential map of a manifold from a point $x$.}

\begin{abstract}
  We introduce a Riemannian optimistic online learning algorithm for Hadamard manifolds based on inexact implicit updates. Unlike prior work, our method can handle in-manifold constraints, and matches the best known regret bounds in the Euclidean setting with no dependence on geometric constants, like the minimum curvature. Building on this, we develop algorithms for g-convex, g-concave smooth min-max problems on Hadamard manifolds. Notably, one method nearly matches the gradient oracle complexity of the lower bound for Euclidean problems, for the first time.
  \end{abstract}

\section{Introduction}
\label{sec:introduction}

Riemannian optimization refers to the optimization functions defined over Riemannian manifolds. Such problems arise when the constraints of Euclidean optimization problems can be viewed as Riemannian manifolds, such as the symmetric positive-definite cone, the sphere, or the set of orthogonal linear layers for a neural network. This Riemannian formulation enables us to leverage the geometric structure of such problems by viewing them as unconstrained problems on a manifold.

Furthermore, some non-convex Euclidean problems, such as operator scaling or optimistic likelihood estimation become geodesically convex (g-convex), i.e., convex along all geodesics, when viewed as Riemannian optimization problems under the right metric \citep{allen2018operator,nguyen2019calculating}. 

Riemannian optimization methods have many application in machine learning such as hyperbolic embeddings \citep{sala2018representation}, hyperbolic neural networks \citep{ganea2018hyperbolic,chami2019hyperbolic}, Gaussian mixture models \citep{hosseini2015matrix}, the Karcher mean \citep{karcher1977riemannian}, dictionary learning \citep{cherian2016riemannian,sun2016complete}, low-rank matrix completion
\citep{vandereycken2013low,mishra2014r3mc,tan2014riemannian,cambier2016robust,heidel2018riemannian}, and optimization under orthogonality constraints \citep{edelman1998geometry,casado2019cheap}.

In this work, we analyze Riemannian optimization in the online setting where an agent receives an arbitrary, possibly adversarial, sequence of Riemannian loss functions and selects actions before knowing the loss functions. Its goal is to minimize the cumulative values of the losses associated to the actions it chooses, i.e., its \emph{regret}.
In particular, we are interested in \emph{optimistic} methods, which allow us to improve the regret whenever the environment is not fully adversarial predicting the next loss based on a \emph{hint}.

Some online optimistic Riemannian algorithms were analyzed by \citet{hu2023minimizing,wang2023riemannian}.
Both works face issues with circular arguments related to the geometry of the manifold. \citet{hu2023minimizing} consider the constrained setting but can only guarantee the actions to lie in a \emph{neighborhood} of the constraint set. The size of this neighborhood depends on the step size, which in turn depends on a geometric factor dependent on the neighborhood's size.
\citet{wang2023riemannian} consider the unconstrained setting but their step sizes depend on the diameter of the set in which the iterates stay set via geometric constants, which again influence the size of this set. They address this issue by \emph{assuming} that the method's iterates stay in a compact set whose diameter is known \emph{a-priori}.

Such circular relationships between step sizes and other problem parameters, which depend on the size of the set in which the iterates lie via geometric terms occur in many Riemannian optimization settings and lead to unfinished analyses, cf. \citet{martinez2024convergence}.

\citet{chen2023generalized,choi2023inexact,amirhossein2020inexact,dixit2019online} study implicit online algorithms, however optimistic approaches have not been developed, even for Euclidean implicit online optimization.

Our optimistic algorithm \RIOD{} is based on a two-step implicit update rule, i.e., on minimizing a regularized version of the loss or hint function, instead of a linearized version thereof. This is important since the linear approximations of g-convex are neither g-convex nor g-concave. Further, we chose a two-step variant even though there are optimistic methods requiring just one step in the euclidean setting, since subtracting a full hint function loss would yield a potentially hard non g-convex subproblem.

In contrast to previous works, \RIOD{} can enforce in-manifold constraints without relying on strong assumptions and has regret guarantees independent of geometric terms, matching the regret guarantees of Euclidean algorithms.  Further, the regret guarantees allow for inexact updates under a certain precision criterion, which can be implemented cheaply for smooth losses. Given that our assumption covers the Euclidean space as a special case, this algorithm might be of independent interest for the online optimization community.

Application for online Riemannian optimization include online formulations of the Karcher mean, covariance estimation and robust subspace recovery \citep{karcher1977riemannian,wiesel2012geodesic,zhang2015robust}. Another notable application of online optimistic methods is to solve $\L $-smooth and g-convex, g-concave min-max problems. To that end, the problem is interpreted as a two-player, zero-sum game and the strategies of both variables are updated based on online optimistic algorithms. This means that both players use gradient based methods and hence adapt their strategy in an incremental fashion. This knowledge can be leveraged using optimism, which leads to  faster rates, see \citet[Chapter 11.5]{orabona2019modern} for an introduction to this approach in the Euclidean setting.

Applications of g-convex, g-concave and $\L $-smooth min-max problems in machine learning include the robust Karcher mean, distributionally robust linear quadratic control and more generally the distributionally robust version of any finite-sum, g-convex optimization problem \citep{zhang2023sion,jordan2022first,taskesen2023distributionally}.
Further, we believe that Riemannian min-max algorithms represent an promising method to solve distributionally robust optimization problems with ambiguity sets based on parametric probability distributions as their parameter space can often be seen as a Riemannian manifold, see \citep{brigant2023parametric}.
Other Riemannian min-max problems, which do not satisfy the g-convex, g-concave and $\L $-smooth conditions include geometry-aware robust PCA and projection-robust optimal transport \citep{horev2017geometry,jiang2023riemannian}.

Building on our online algorithm \RIOD{}, we introduce \RIODA{}, an inexact and implicit min-max algorithm for possibly constrained problems based on updating both variables based on the \RIOD{} update rule. Implementing the update rules using Composite Riemannian Gradient Descent (\newtarget{def:acronym_composite_riemannian_gradient_descent}{\CRGD{}}) introduced in \citet{martinez2024convergence}, we achieve near-optimal gradient oracle complexity of $\bigol{\L \R^2/\epsilon}$ and $\bigotilde{\L /\mu}$ for $\mu=0$ and $\mu>0$, respectively, for an $\newtarget{def:accuracy_epsilon}{\epsilon}$ duality gap, matching Euclidean algorithms up to logarithmic factors. In particular, the complexity only has a logarithmic dependence on $\zeta$, a geometric term arising from the curvature of the manifold, defined in the next section, for the first time. Note that implementing the \CRGD{} update steps might be a hard computational problem. Nevertheless, this is a surprising result given that in the g-convex and $\L $-smooth minimization setting the best known rates of $\bigotilde{\zeta+\sqrt{\frac{\zeta \L \R^2}{\epsilon}}}$ proven in  \citet{martinez2023acceleratedminmax} do not match the Euclidean rates of $\bigo{\sqrt{\L \R^2/\epsilon}}$ \citep{nesterov2005smooth}, where $\R$ is the initial distance to a minimizer.

Implementing \RIODA{} using Riemannian Gradient Descent (\newtarget{def:acronym_riemannian_gradient_descent}{\RGD{}}) in the unconstrained setting, we improve the complexity from $\bigotilde{\zeta^4\L \R^2/\epsilon}$ \citep{martinez2024convergence} to  $\bigotilde{\zeta^2\L \R^2/\epsilon}$, among the algorithms that do not require the knowledge of the initial distance to the solution.

In the constrained setting implementing \RIODA{} using Projected RGD (\newtarget{def:acronym_metric_projected_riemannian_gradient_descent}{\PRGD{}}), we improve the gradient oracle complexity by a factor of $\bigotilde{\zeta^{3.5}}$ compared to the prior best rate \citep{martinez2023acceleratedminmax}. Furthermore, the algorithm does not require the knowledge of the Lipschitz constant of $f$ in the constraint set.
We validate our theoretical results by running experiments on the robust Karcher mean problem in the symmetric positive definite manifold and the hyperbolic space, see \cref{sec:experiments}.

\textbf{Contributions.}
\begin{itemize}[nosep]
  \item \textbf{\RIOD{}} An inexact Riemannian implicit online optimistic algorithm for the constrained setting on Hadamard manifolds, matching the best known Euclidean regret bounds, in particular removing the dependence on $\zeta$, a term arising from the geometry of the manifold. Furthermore, this method removes the dependence on strong assumptions present in prior works.
  \item \textbf{\RIODA{}} An inexact Riemannian implicit algorithm for min-max optimization on Hadamard manifolds with and without in-manifold constraints.
  \item Different implementations of the update rules of \RIODA{} using first-order methods, which improve on prior works by either reducing the complexity or not requiring the knowledge of certain parameters. Notably, one variant nearly achieves the optimal Euclidean gradient oracle complexity, by removing the dependence on $\zeta$ up to log factors. This result is in contrast to smooth g-convex minimization, where we know that at least there is an extra curvature-dependent additive penalty, linear on $\zeta$, over the rate of the Euclidean-optimal accelerated solutions, cf. \citep{criscitiello2023curvature}. 
\end{itemize}

\subsection{Preliminaries} \label{sec:preliminaries}
A Riemannian manifold $(\M,\mathfrak{g})$ is a real $C^{\infty}$ manifold $\M$ equipped with a Riemannian metric $\mathfrak{g}$, which assigns a smoothly varying and positive definite inner product to each $x\in \M$.
For $x \in \M$, denote by $\newtarget{def:tangent_space}{\Tansp{x}}\M$ the tangent space of $\M$ at $x$.
For vectors $v,w  \in \Tansp{x}\M$, we use $\innp{v,w}_x$ and $\norm{v}_x \defi \sqrt{\innp{v,v}_x}$ for the metric's inner product and norm, and omit $x$ when it is clear from context.
A geodesic of length $\ell$ is a curve $\gamma : [0,\ell] \to \M$ of unit speed that is locally distance minimizing.

The exponential map $\newtarget{def:riemannian_exponential_map}{\expon{x}} : \Tansp{x}\M\to \M$ takes a point $x\in\M$, and a vector $v\in \Tansp{x}\M$ and returns the point $y$ we obtain from following the geodesic from $x$ in the direction $v$ for length $\norm{v}$, if this is possible.
We denote its inverse by $\exponinv{x}(\cdot)$.
It is well defined for uniquely geodesic manifolds, i.e., manifolds where every two points in that space are connected by one and only one geodesic, so we have $\expon{x}(v) = y$ and $\exponinv{x}(y) = v$. We denote the distance between two points by $\newtarget{def:distance}{\dist}(x, y)=\norm{\exponinv{x}(y)}$.
The manifold $\M$ comes with a natural parallel transport of vectors between tangent spaces, that formally is defined from the Levi-Civita connection $\nabla$.
In that case, we use $\newtarget{def:parallel_transport}{\Gamma{x}{y}}(v) \in \Tansp{y}\M$ to denote the parallel transport of a vector $v$ in $\Tansp{x} \M$  to $\Tansp{y}\M$ along the unique geodesic that connects $x$ to $y$.

The sectional curvature of a manifold $\mathcal{M}$ at a point $x\in\mathcal{M}$ for a $2$-dimensional space $V\subset \Tansp{x}\M$ is the Gauss curvature of $\expon{x}(V)$ at $x$.
A Hadamard manifolds is a complete simply-connected Riemannian manifold of non-positive sectional curvature, which is in particular uniquely geodesic.

A set $\X$ is said to be g-convex if every two points are connected by a geodesic that remains in $\X$.
For two points $x,y \in \X$, let $\gamma:[0, 1] \to \M$ be a constant speed geodesic joining $x$ and $y$ such that $\gamma(0)=x$ and $\gamma(1)=y$, then we call a function $f$ g-convex in $\X$ if $f(\gamma(t)) \leq tf(x) + (1-t)f(y)$ for all $x,y\in \X$. A differentiable function is $\newtarget{def:strong_g_convexity_of_F}{\mu}$-strongly g-convex (resp., $\newtarget{def:riemannian_smoothness_constant}{\L }$-smooth) in $\X$, if we have $\circled{1}$ (resp. $\circled{2}$) for any two points $x, y \in \X$:
    \[
        \frac{\mu\dist(x,y)^2}{2}{\circled{1}[\leq]}f(y) -f(x) - \innp{\nabla f(x), \exponinv{x}(y)}{\circled{2}[\leq]}\frac{\L \dist(x,y)^2}{2}.
      \]
    The function g-convex if $\mu=0$.
Note the dependence on $\X$ is important, since for $\mu$-strongly g-convex and $\L $-smooth function, the condition $\L /\mu$ depends on the size of $\X$ \citep[Proposition 53]{criscitiello2023curvature}.
    A function $f$ is $\newtarget{def:Lipschitz_constant}{\Lips}$-Lipschitz in $\X$ if $|f(x) - f(y)| \leq \Lips \dist(x, y)$ for all $x, y \in \X$.

Given $r>0$, and a uniquely geodesic Riemannian manifold $\M$ with sectional curvature bounded in $[\newtarget{def:minimum_sectional_curvature}{\kmin},\newtarget{def:maximum_sectional_curvature}{\kmax}]$. Then, we define the geometric constants $\newtarget{def:zeta_geometric_constant}{\zeta_r}\defi r\sqrt{\abs{\kmin}}\coth(r\sqrt{\abs{\kmin}}) = \Theta(1 + r\sqrt{\abs{\kmin}})$ if $\kmin < 0$  and $\zeta_r\defi 1$ otherwise, and $\newtarget{def:delta_geometric_constant}{\delta_r} \defi r\sqrt{\kmax}\cot(r\sqrt{\kmax})$ if $\kmax > 0$ and $\delta_r\defi 1$ otherwise. It is $\delta_r \leq 1 \leq \zeta_r$.

We denote the closed Riemannian ball of center $x$ and radius $r$ by $\newtarget{def:closed_ball}{\ball}(x, r)$. $\newtarget{def:projection_operator}{\proj{\X}}(x)\defi \argmin_{y\in \X}\dist(y,x)$ denotes the metric projection onto a set $\X$. Note that for some sets, such as the Riemannian ball, $\proj{\X}$ can solved in closed form \citep{martinez2023acceleratedmin}.
The big-$O$ notation $\newtarget{def:big_o_tilde}{\bigotilde{\cdot}}$ omits $\log$ factors. We refer to \citet{petersen2006riemannian, bacak2014convex} for an overview of the Riemannian geometry used in this work.

\subsection{Related Work}
\label{sec:related-work}

\paragraph{Online Convex Optimization (OCO).}
Online optimization is modeled as a sequential game between an agent and its environment, where in each round $t$ it chooses an action $x_t$ from some convex set $\X$. Then, the environment reveals a convex loss function $\ell_t$ and the agent pays the loss $\ell_t(x_t)$.
Note that no assumptions are made about the environment, it could also act adversarially.
The goal of the agent is to choose the sequences of action $x_t$ such that they minimize the difference between the cumulative losses paid by the agent and the losses associated to a fixed action $u\in \X$, i.e., its \emph{regret}, $\newtarget{def:regret}{\regret{T}}(u)\defi \sum_{t=1}^T\ell_t(x_t)-\ell_t(u)$.
While the OCO setting covers a wide range of settings, it can be overly pessimistic as real-world settings are rarely fully adversarial.
The goal of \emph{optimistic} methods is to improve the regret in settings that are not fully adversarial by predicting the next loss based on additional information about the environment, encoded by a \emph{hint}. The regret of these methods depends on how well the hint approximates the real loss.

Another development in online optimization is the use of implicit updates. Typically, online optimization algorithms compute their next action minimizing the loss linearized at $x_t$ plus a regularizer.
Implicit algorithm instead minimize the full loss function plus a regularizer, which is typically not solvable in closed form, hence the name.
Recent works show improved regret guarantees for implicit versions of online mirror descent and follow the regularized leader \citep{campolongo2020temporal,chen2023generalized}.
Some works also provide regret guarantees that allow for inexact solutions of the implicit updates \citep{chen2023generalized,choi2023inexact,amirhossein2020inexact,dixit2019online}.

\paragraph{Online Riemannian Optimization.}
The OCO framework has been extended to the Riemannian setting for g-convex loss functions.
\citet{becigneul2019riemannian} provide $\bigo{\sqrt{T}}$ regret guarantees for adaptive online algorithms on Cartesian products of Riemannian manifolds.
\citet{wang2021no-regret} provide regret bounds for Lipschitz and g-convex Riemannian online optimization in the full information as well as the bandit and two-point feedback setting.
\citet{hu2023riemannian} analyze projection-free Riemannian methods in Hadamard manifolds using a linear minimization or a separation oracle in both the full-information as well as the bandit setting.
\citet{maass2022tracking} provide regret guarantees for zeroth-order methods in Hadamard manifolds.

\citet{hu2023minimizing} introduce an optimistic work for Hadamard manifolds with in-manifold constraints where the hint is an arbitrary vector in the tangent space of the last action $M_t\in T_{x_{t-1}}\M$. This method achieves \emph{improper} regret bounds of $\bigo{\eta\zeta \VT+\frac{\DOL^2}{\eta}}$, where $\newtarget{def:variation}{\VT}\defi \sum_{t=2}^T\max_{x\in\X} \norm{\nabla \ell_t(x)-M_t(x)}^{2}$ measures how well the hint predicts the loss functions. Improper regret is a relaxed notion of regret wherein the agent’s actions are only guaranteed to lie in a neighborhood of the constraint set while the comparator has to lie inside the set. This creates a circular relationship between the size of this neighborhood and the step size, which depends on the properties of the function in the neighborhood and geometric factors, both of which again depend on the size of the neighborhood. Hence, it is not clear if a bound on the improper regret implies a bound on the classical regret.

\citet{wang2023riemannian} prove regret optimistic regret bounds for general Riemannian manifolds without in-manifolds constraints. They consider the specific case where the hint function is $M_t=\nabla \ell_{t-1}(x_{t-1})$ is fixed to the loss gradient from the last iteration. Therefore, their regret guarantee $\bigo{\frac{\DOL^2}{\eta}+\eta \frac{\zeta_{\DOL} ^2(\bar{V}_T+\Lips^2)}{\delta_{\! \DOL}}}$ scales with the gradient variation $\bar{V}_T\defi\sum_{t=2}^T\max_{x\in\X} \norm{\nabla \ell_t(x)-\nabla \ell_{t-1}(x)}^{2}$.
Further, the step size of the algorithm depends on the size of the optimization domain via geometric terms, which again influences the size of the optimization domain, leading to a circular relationship. They rely on the assumption that the iterates stay in a pre-defined domain without a mechanism of enforcing it. Since the loss functions change in every round, this assumption does not need to hold in practice.

\begin{table}[]
  \caption{Comparison of Riemannian online algorithms. Our contribution is in \textbf{bold}. The entries in column $\regret{T}$ denote the regret, where $\eta>0$ is a parameter chosen by the algorithm. In column $\X$, \yesmark~indicates that the algorithm can enforce constraints. $M$ denotes the manifolds, where $\H$ denotes Hadamard manifolds, $E$ denotes the Euclidean space and $\M$ denotes general Riemannian manifolds.
    \textsuperscript{1}Guarantees in terms of \emph{improper} regret. \textsuperscript{2}The geometric term $\zeta$, $\delta$ are with respect to a domain which is not guaranteed to be bounded.}
\resizebox{\columnwidth}{!}{
\begin{tabular}{@{}lcccc@{}}
\toprule
       & $\regret{T}$ & $\X$ & $M$ \\ \midrule
    \newlink{def:acronym_riod}{\textbf{RIOD}}   &  $\bigo{\frac{\DOL^2}{\eta}+\eta \VT}$       &  \yesmark &  $\H$ \\
O-RCEG \hyperlink{cite.hu2023minimizing}{[HGA23]}\textsuperscript{1,2} &   $\bigo{\frac{\DOL^2}{\eta}+\eta \zeta \VT}$     &  \nomark & $\H$  \\
ROOGD   \hyperlink{cite.wang2023riemannian}{[WYH+23]}\textsuperscript{2} &   $\bigo{\frac{\DOL^2}{\eta}+\frac{\eta \zeta ^2(\bar{V}_T+\Lips^2)}{\delta}}$     & \nomark  &  $\M$ \\
\midrule
OOMD  \hyperlink{cite.rakhlin2013optimization}{[RS13]}&   $\bigo{\frac{\DOL^2}{\eta}+\eta \VT}$     &   \yesmark & $E$  \\\bottomrule
\end{tabular}}
\end{table}

\paragraph{Riemannian Min-Max Algorithms}
\begin{table}[h!]
  \caption{Comparison of Riemannian min-max algorithms for $\mu$-strongly g-convex, strongly g-concave and $\L $-smooth problems.
    The entries of $\mu=0$ and $\mu>0$ contain the convergence rates for the two settings.
    The entries in column $M$ denote the manifolds, where $\H$ denotes Hadamard manifolds and $\M$ denotes general Riemannian manifolds.
    $\X$ denotes whether the algorithm can enforce in-manifold constraints.
    In column $\R$? and $\Lips $?, \yesmark~means that the algorithm can be run \emph{without} knowledge of the initial distance $\R\defi \dist(x_1,x^{*})+\dist(y_1,y^{*})$ to a solution and Lipschitz constant $\Lips $ of the function in the optimization domain, respectively. Note that $\DMM$ refers to the diameter of the constraint set and $\tilde{R}\defi \Lips /\L +\DMM$.
    \textsuperscript{1}The iterates are not guaranteed to stay in a bounded domain.
    \textsuperscript{2}In addition, they achieve a last-iterate rate of $\bigo{\frac{\zetar }{\delta_{\!\R}^{3/2}}\frac{\L \R^2}{\epsilon^2}}$ for $\mu=0$.
    \textsuperscript{3}The algorithm has faster rates in the fine-grained setting where the strong g-convexity and smoothness constants can vary between the variables.
    \textsuperscript{4}\citet{martinez2023acceleratedminmax} showed that the iterates stay in a bounded set.
  }
\resizebox{\columnwidth}{!}{
\setlength{\tabcolsep}{0.5\tabcolsep}
\begin{tabular}{@{}lcccccc@{}}
\toprule
Algorithm    & $\mu=0$ & $\mu>0$ & $M$ & $\X$ & $\R$? &  $\Lips $?\\ \midrule
{\RIODA{}\textsubscript{\CRGD{}}}   & $\bigotilde{\frac{\L \R^2}{\epsilon}}$     &  $\bigotilde{\frac{\L }{\mu}}$                 &  $\H$ &  \yesmark &   \yesmark& \yesmark\\
    {\RIODA{}\textsubscript{\PRGD{}}}  &  $\bigotilde{\frac{\zeta_{\DMM} \zeta_{\!\tilde{R}}\L \R^2}{\epsilon}}$    &     $\bigotilde{\frac{\zeta_{\DMM} \zeta_{\!\tilde{R}}\L }{\mu}}$     &  $\H$ &  \yesmark &   \yesmark& \yesmark\\
  {\RIODA{}\textsubscript{\RGD{}}}  &  $\bigotilde{\zetar ^2\frac{\L \R^2}{\epsilon}}$    &     $\bigotilde{{\zetar ^2\frac{\L }{\mu}}}$         &  $\H$ &  \nomark &   \yesmark & \yesmark\\\midrule
\hyperlink{cite.martinez2024convergence}{[MRP24]} &  $\bigotilde{\zetar ^4\frac{\L \R^2}{\epsilon}}$    &      $\bigotilde{\zetar ^4\frac{\L }{\mu}}$             &  $\H$ & \nomark  & \yesmark & \yesmark\\
\hyperlink{cite.cai2023curvature}{[CJL+23]}\textsuperscript{1}      &  -    &         $\bigotilde{\frac{\L^2}{\mu^2}}$          & $\M$ &  \nomark  &  \nomark  & \yesmark\\
\hyperlink{cite.wang2023riemannian}{[WYH+23]}&  $\bigol{\frac{\zetar }{\delta_{\!\R}}\frac{\L \R^2}{\epsilon}}$    &      -             &  $\M$ & \nomark  & \nomark & \nomark\\
\hyperlink{cite.hu2023extragradient}{[HWW+23]}\textsuperscript{2}&  $\bigol{\frac{\zetar }{\delta_{\!\R}}\frac{\L \R^2}{\epsilon}}$    &      $\bigotilde{\frac{\L (\zetar ^2+ \R^2\kappamax)}{\mu\delta_R}}$             &  $\M$ & \nomark  & \nomark & \nomark\\
   \hyperlink{cite.martinez2023acceleratedminmax}{[MRC+23]}\textsuperscript{3}     &   $\bigotilde{\zeta_{\!\tilde{R}}\zeta_{\DMM} ^{4.5}\frac{\L \R^2}{\epsilon}}$   &  $\bigotilde{\zeta_{\!\tilde{R}}\zeta_{\DMM} ^{4.5}\frac{\L }{\mu}}$                 &  $\H$ & \yesmark  &  \nomark& \nomark\\
  \hyperlink{cite.jordan2022first}{[MLV22]}\textsuperscript{4}       &  -    &   $\bigotilde{\frac{\sqrt{\zetar }\L }{\sqrt{\delta_{\!\R}}\mu}}$        &  $\M$ &  \nomark  &  \nomark&\yesmark \\
  \hyperlink{cite.zhang2023sion}{[ZZS23]}\textsuperscript{4}       &  $\bigo{\frac{\sqrt{\zetar }\L \R^2}{\sqrt{\delta_{\!\R}}\epsilon}}$    &   -        &  $\M$ &  \nomark  &  \nomark&\yesmark \\
  \bottomrule
\end{tabular}}
\end{table}
We limit our discussion to the smooth and g-convex, g-concave setting.
\citet{zhang2023sion} introduced Riemannian Corrected Extra Gradient (RCEG), a variation of the Euclidean extragradient (EG) algorithm and showed convergence rates, that are optimal up to geometric factors, for unconstrained g-convex, g-concave  min-max problems on general Riemannian manifolds.
\citet{jordan2022first} extended the analysis to the $\mu$-strongly g-convex, g-concave setting.
Both works rely on the assumption that iterates of RCEG stay in a bounded domain without a mechanism to enforce it.
\citet{martinez2023acceleratedminmax} remove this assumption by showing that the iterates of RCEG naturally stay in a compact set around a solution without impacting the rates by modifying the step size.
Further, they introduce their algorithm RAMMA for Hadamard manifolds, which achieves faster rates with respect to function parameters in the fine-grained settings, where the smoothness and strong g-convexity parameters can vary between variables at the cost of worse dependence on $\zeta$. They achieve, among others, the optimal $\bigol{1/\sqrt{\epsilon}}$ rates with respect to $\epsilon$ for the strongly g-convex, g-concave setting. While RAMMA can handle in-manifold constraints, the algorithm has four loops, making it complex to implement. Further, it requires the knowledge of the Lipschitz constant $\Lips $ in the constrained setting and the initial distance to a solution $R$ in the unconstrained setting.

The following works consider the more general case of variational inequalities with monotone and Lipschitz operators, which encompass smooth and g-convex, g-concave min-max problems as special cases.
\citet{hu2023extragradient} introduce two variants of the EG algorithm in general Riemannian manifolds, focusing on last-iterate convergence in the g-convex, g-concave setting.
\citet{cai2023curvature} show linear convergence rates independent of geometric terms such as $\zeta$ in terms of the gradient norm in general Riemannian manifolds. However, they rely on the assumption that the iterates stay in a bounded domain without a mechanism to enforce it.
\citet{martinez2024convergence} introduce an inexact proximal point algorithm achieving accelerated rates. Implementing their update rule using the algorithm by \citet{cai2023curvature}, they show that the iterates naturally stay in a bounded domain and achieve accelerated rates without requiring prior knowledge of the initial distance to a solution, for the first time.

Lastly, lower bounds specific to Riemannian optimization where established in  \citep{hamilton2021nogo,criscitiello2022negative,criscitiello2023curvature}. These bounds hold for the $\L $-smooth and g-convex minimization problem, which is a special case of g-convex, g-concave and $\L $-smooth min-max optimization.

\section{Riemannian Implicit Optimistic Online Gradient Descent}
\label{sec:riem-impl-optim}
Before discussing the technical details of \RIOD{}, we present the motivations underlying its design.
There are two families of online optimistic algorithms, namely optimistic follow-the-regularized-leader (OFTRL) \citep{rakhlin2013online} and optimistic online mirror descent (OOMD) \citep{chiang2012online,rakhlin2013optimization}.

In the Riemannian setting, a function linearized at a point $\bar{x}\in\M$ is defined with respect to the tangent space $T_{\bar{x}}\M$ and is not g-convex. However, it is trivially star g-convex (and star g-concave) at $\bar{x}$, that is, convex (concave) along the geodesic going through $\bar{x}$. Thus, the OFTRL update rule for the action in round $t+1$ in the Riemannian setting would consist of minimizing \[\sum_{i=1}^t\innp{\nabla\ell_i(x_i),\exponinv{x_i}(x)}_{x_i}{+}\innp{M_t,\exponinv{x_t}(x)}_{x_t}{+}\frac{1}{2\eta}\dist(x_t,x)^2,
\]
where $\ell_t$, $x_t$ and $M_t$ refers to the loss, the action and the hint in round $t$. This function is neither g-convex nor star g-convex in general as each linearized function is only star g-convex with respect to one point. On the other hand, it is possible to implement OFTRL \emph{without} linearizing the loss functions, i.e., minimizing $\sum_{i=1}^t\ell_i(x_i)+\tilde{\ell}_t(x_t)+\frac{1}{2\eta}\dist(x_t,x)^2$. If the loss functions $\ell_t$ and the hint function $\tilde{\ell}_t$ are g-convex, the update rule is also g-convex. Further, the analysis of OFTRL can be extended to the full-loss setting, both in the Euclidean and Riemannian setting.
However, this  approach is computationally impractical as it would require minimizing the sum of an increasing number of functions, making each iteration progressively more expensive to implement.

The original OOMD algorithm, due to \citet{chiang2012online} and further generalized by \citet{rakhlin2013optimization}, is based on updating two iterate sequences, referred to as the \emph{primary} and \emph{secondary} iterates $\tilde{x}_t$ and $x_t$, where the agent chooses the primary iterates as actions. After the loss is observed in round $t$, the secondary iterate is updated based on the classical online mirror descent rule, i.e.,  $x_{t+1}\gets \argmin_x\innp{\nabla \ell_t(x_t),x}+\frac{1}{2\eta}\norm{x-x_t}^2$. Then, the primary iterate is updated by $\tilde{x}_{t+1}\gets \argmin_x\innp{M_t,x}+\frac{1}{2\eta}\norm{x-x_{t+1}}^2$ ensuring that the results stays close to the secondary iterate. 
Note that the secondary iterate is independent of the primary iterate, meaning that the agent always maintains a more conservative action from which it can compute an improved action based on the hint. This way the agent can benefit from the hint without accumulating errors arising when the hint does not perfectly predict the next loss. The method in  \citet{hu2023minimizing} is based on this approach.

\citet{Joulani2017modular} introduced a single-iterate OOMD variant that corrects prediction errors by subtracting the previous round's hint at each iteration, i.e., $x_{t+1}\gets \argmin_x\innp{\nabla\ell_t(x_t)+M_t-M_{t-1},x} + \frac{1}{2\eta}\norm{x-x_{t+1}}^2$, thereby removing the need for a secondary iterate. \citet{wang2023riemannian} adapted this approach to the Riemannian setting for the special case where $M_t=\nabla \ell_t(x_t)$ by parallel-transporting the correction term to the tangent space of the current iterate, i.e.
$x_{t+1}\gets \argmin_x\innp{2\nabla\ell_t(x_t)-\Gamma{x_{t-1}}{x_t}\nabla\ell_{t-1}(x_{t-1}),\exponinv{x_t}(x)} + \frac{1}{2\eta}\dist(x,x_{t+1})^2$.

Both methods cannot easily be extended to enforce in-manifold constraints. Recall that enforcing constraints is imperative for us since we aim to design an algorithm where the action set and the comparator set are the same, since the improperness of previous approaches discussed previously is weak and does not allow to fully harness the power of online learning, such as yielding algorithms for minmax problems via reductions, which we are able to exploit in this work. We note that in general, the analysis of algorithms enforcing in-manifold constraints can be challenging in Riemannian manifolds. For manifolds with positive curvature, the metric projection is not a non-expansive operator \citep[Section 6.1]{wang2023online}. But also in Hadamard manifolds, where the metric projection is non-expansive, the analysis remains challenging. Indeed, the first proof of linear convergence of \PRGD{} for constrained strongly g-convex and smooth minimization problems in Hadamard manifolds without any non-standard assumptions was only recently established \citep{martinez2023acceleratedminmax}.

To address the challenges arising from the constraints, we propose an implicit algorithm based on the   two-step OOMD, where the loss functions are not linearized. We do not use the one-step approach since we do not linearize the functions, the correction term would be a g-concave function, breaking the g-convexity of the subproblem.

In our algorithm \RIOD{}, $\tilde{x}_t$ and $x_{t}$ serve as primary and secondary iterates respectively. After playing $\tilde{x}_t$, the algorithm computes the secondary iterate $x_{t+1}$ via an implicit gradient step on the loss function $\ell_t$ received that round. Rather than playing $x_{t+1}$ in the next round, the agent selects a hint function $\tilde{\ell}_{t+1}$ to predict $\ell_{t+1}$ and takes an implicit step on that function to determine the next action $\tilde{x}_{t+1}$. As in OOMD, the secondary iterates $x_{t}$ are independent of the primary iterates $\tilde{x}_t$, thus preventing error accumulation from imperfect hint predictions.

\begin{algorithm}[ht!]
    \caption{Riemannian Implicit Optimistic Online Gradient Descent (\RIOD{})}
    \label{alg:riod}
\begin{algorithmic}[1]
    \REQUIRE Compact constraint set $\X \subset \M$ with diameter $D$, sectional curvature lower bound $\kappamin$, initial point $x_1 \in \X$, smoothness constant $\L $ of loss and hint function $\ell_t$ and $\tilde{\ell}_t$, and proximal parameter $\eta > 0$.
    \vspace{0.1cm}
    \hrule
    \vspace{0.4cm}
    \hspace{-1.3cm}\textbf{Definitions:} \Comment{\emph{The alg. does not compute these quantities.}}
    \begin{itemize}[leftmargin=*]
      \item Exact solutions:
      \item[]  ${\displaystyle\tilde{x}^\ast_{t}\defi \argmin_{x\in \X}} \set{\tilde{L }_t(x) \defi \tilde{\ell}_{t}(x)+\frac{1}{2\eta}\dist(x,x_t)^2}$
      \item[]   ${\displaystyle x^\ast_{t+1}\defi \argmin_{x\in \X} }\set{ \Ft (x) \defi \ell_t(x)+\frac{1}{2\eta}\dist(x,x_t)^2}$
      \item Precision parameter: 
    \end{itemize}
     $\varepsilon_t\defi \frac{\max\{4,(t+1)^2 (15+8\eta^2 \L^2+2\eta^{2}\Lips^2(D^{-2}+48\vert \kappamin\vert ) )\}^{-1}}{8\eta}$
    \hrule
    \vspace{0.1cm}
    \State $\tilde{\ell}_1\gets 0$
    \FOR {$t = 1 \text{ \textbf{to} } T$}
    \State Choose $\tilde{\ell}_t$
    \State Play $\tilde{x}_{t}\gets (\varepsilon_t\dist(x_t,\tilde{x}_{t}^{*})^2)\text{-minimizer of } \Ftt (x) \text{ in } \X$ \label{line:xtilde-criterion}
    \State Receive $\ell_t$
  \State $ x_{t+1}\gets (\varepsilon_t\dist(x_t,x_{t+1}^{*})^2)\text{-minimizer of } \Ft (x) \text{ in } \X$ \label{line:x-criterion}
    \ENDFOR
\end{algorithmic}
\end{algorithm}

The following result shows an optimistic regret guarantee for Hadamard manifolds with in-manifold constraints, improving on \citet{hu2023minimizing}, as it can enforce the iterates to stay in $\X$ and therefore achieves classical regret bound instead of an improper one. Further, the regret is improved by removing the dependence on $\zeta_{\DOL}$, matching the best known Euclidean rates.

We also establish a dynamic regret bound for \newtarget{def:acronym_riod}{\RIOD{}} in \cref{thm:riod}, which compares the agent's actions against a sequence of comparators rather than a single fixed comparator.

\begin{restatable}[\RIOD{}]{theorem}{riodshort}\label{thm:riod-short}\linktoproof{thm:riod-short}
    Let $\mathcal{M}$ be a Hadamard manifold with sectional curvature in $[\kappamin, 0]$. Further, for $t\in[T]$, let the loss and hint functions $\ell_t,\tilde{\ell}_t:\mathcal{M}\rightarrow \mathbb{R}$ be g-convex, and $L$-smooth in a compact and g-convex set $\X\subseteq \mathcal{M}$ with $\newtarget{def:diam_in_online_learning_alg_riod}{\DOL}\defi \diam (\X)$.
  For $\eta>0$ we have for any $u\in \X$ that
  \begin{equation*}
\regret{T}(u)\le\frac{3\DOL^2}{2\eta}+\eta\sum_{t=1}^T\norm{\nabla\ell_t(\tilde{x}_t)-\nabla\tilde{\ell}_t(\tilde{x}_t)}^2.
\end{equation*}
\end{restatable}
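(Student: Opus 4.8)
The plan is to follow the standard two-step OOMD regret analysis, but carried out intrinsically on the Hadamard manifold and with care taken to absorb the inexactness of the implicit updates. First I would set up the single-round bound. Since $\ell_t$ is g-convex in $\X$, for any comparator $u\in\X$ we have $\ell_t(\tilde{x}_t)-\ell_t(u)\le\innp{\nabla\ell_t(\tilde{x}_t),-\exponinv{\tilde{x}_t}(u)}_{\tilde{x}_t}$. The goal is to bound $\sum_t\innp{\nabla\ell_t(\tilde{x}_t),-\exponinv{\tilde{x}_t}(u)}$. The natural approach is to insert the two implicit subproblems: the secondary update $x_{t+1}$ minimizes (approximately) $\ell_t(x)+\frac{1}{2\eta}\dist(x,x_t)^2$ and the primary update $\tilde{x}_t$ minimizes (approximately) $\tilde{\ell}_t(x)+\frac{1}{2\eta}\dist(x,x_t)^2$. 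I would use the approximate first-order optimality of these subproblems. For an $\epsilon'$-minimizer $z$ of a g-convex function $\phi$ over the g-convex set $\X$, one gets an inequality of the form $\innp{\nabla\phi(z),-\exponinv{z}(w)}\le \phi(w)-\phi(z)+(\text{error in terms of }\epsilon')$ for all $w\in\X$; combined with strong g-convexity of the quadratic regularizer (which is $\frac{1}{\eta}$-strongly g-convex since $\dist(\cdot,x_t)^2/2$ is $1$-strongly g-convex on a Hadamard manifold), this yields the key three-point inequalities relating $x_{t+1}$, $\tilde{x}_t$, $x_t$ and $u$.

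The heart of the argument is a telescoping potential. Using the law-of-cosines inequality for Hadamard manifolds (the $\dist^2$ comparison inequality, which on nonpositively curved spaces goes the "favorable" way with constant $1$, i.e. $\dist(\expon{x}(v),w)^2\ge\dist(x,w)^2-2\innp{v,\exponinv{x}(w)}+\norm{v}^2$-type bounds), I would derive, for each $t$,
\begin{equation*}
\innp{\nabla\ell_t(\tilde{x}_t),-\exponinv{\tilde{x}_t}(u)}\le\frac{1}{2\eta}\big(\dist(x_t,u)^2-\dist(x_{t+1},u)^2\big)-\frac{1}{2\eta}\dist(\tilde{x}_t,x_t)^2-\frac{1}{2\eta}\dist(\tilde{x}_t,x_{t+1})^2+\eta\norm{\nabla\ell_t(\tilde{x}_t)-\nabla\tilde{\ell}_t(\tilde{x}_t)}^2+(\text{inexactness}).
\end{equation*}
The term $\eta\norm{\nabla\ell_t(\tilde{x}_t)-\nabla\tilde{\ell}_t(\tilde{x}_t)}^2$ arises by Young's inequality from the mismatch between the hint gradient (used to produce $\tilde{x}_t$) and the true loss gradient, with the two negative squared-distance terms $\dist(\tilde{x}_t,x_t)^2$ and $\dist(\tilde{x}_t,x_{t+1})^2$ serving as the "stability" slack that pays for this. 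Summing over $t$ telescopes the $\dist(x_t,u)^2$ terms, leaving $\frac{1}{2\eta}\dist(x_1,u)^2\le\frac{\DOL^2}{2\eta}$. The residual $\frac{\DOL^2}{\eta}$ (giving the $\frac{3\DOL^2}{2\eta}$ in the statement rather than $\frac{\DOL^2}{2\eta}$) and the cancellation of the accumulated inexactness errors come precisely from the choice of the precision schedule $\varepsilon_t$: the $(t+1)^{-2}$ decay makes $\sum_t\varepsilon_t$ summable, and the constants in $\varepsilon_t$ are chosen exactly so that each round's error is dominated by the available negative distance terms plus an $O(1/\eta)$-summable tail. I would need the Lipschitz-type control on $\nabla\ell_t$ implied by $\L$-smoothness on the compact $\X$ (giving $\Lips\le\L\DOL$, which is where the $\Lips^2$, $\L^2$ and $\abs{\kappamin}$ terms in $\varepsilon_t$ enter) to bound how far an $\varepsilon_t\dist(x_t,\cdot)^2$-minimizer can be from the exact minimizer.

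The main obstacle, and where the real work lies, is making the inexact three-point inequality rigorous: an $\epsilon'$-minimizer of the regularized subproblem need not satisfy the clean variational inequality that the exact minimizer does, so one must quantify the gap. The cleanest route is to show that approximate optimality in function value plus strong g-convexity of the subproblem forces the approximate minimizer to be close (in distance, quantitatively $\le\sqrt{2\eta\epsilon'}$-ish) to the exact one, and then to propagate this distance bound through the exact minimizer's variational inequality, controlling the resulting cross terms with the smoothness/Lipschitz bounds on $\X$ and the curvature constant $\abs{\kappamin}$ (which governs the distortion of inner products under parallel transport between nearby tangent spaces). A secondary subtlety is that the law-of-cosines step must use only the $\kappa\le 0$ side, so no $\zeta$ or $\delta$ factor ever appears — this is exactly why the bound matches the Euclidean one with no geometric constant, and it must be checked that every inequality used is the curvature-free one valid on all Hadamard manifolds. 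Once these pieces are in place, the telescoping and the bookkeeping against the $\varepsilon_t$ schedule are routine.
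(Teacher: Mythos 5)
Your outline tracks the paper's actual argument closely in most respects: the proof of \cref{thm:riod-short} is indeed obtained (via the dynamic-regret \cref{thm:riod} with $u_t=u$, $\mu=0$) by a two-step implicit OOMD analysis in which the curvature-free side of the Hadamard cosine law (with $\delta=1$) yields the three-point inequalities, Young's inequality produces the $\eta\norm{\nabla\ell_t(\tilde{x}_t)-\nabla\tilde{\ell}_t(\tilde{x}_t)}^2$ optimism term paid for by negative stability terms, strong g-convexity of the subproblems converts the $\varepsilon_t\dist(x_t,\cdot)^2$ criterion into a distance bound to the exact minimizers, the parallel-transport distortion of the log map is controlled through $\abs{\kappamin}$, and the $(t+1)^{-2}$ schedule makes the accumulated error (and the extra $\DOL^2/\eta$) summable. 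You also correctly isolate the two genuinely hard points: propagating the exact minimizer's variational inequality to the inexact iterate, and checking that no $\zeta$ factor ever enters.

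There is, however, one concrete misstep in your opening move. You reduce to the linearized regret $\ell_t(\tilde{x}_t)-\ell_t(u)\le\innp{\nabla\ell_t(\tilde{x}_t),-\exponinv{\tilde{x}_t}(u)}$ and then try to bound this against the subproblems. For \emph{implicit} updates this is the wrong entry point: the optimality condition of the secondary step involves $\nabla\ell_t(x_{t+1}^{*})$ in $T_{x_{t+1}^{*}}\M$, so connecting it to $\nabla\ell_t(\tilde{x}_t)$ paired with the long vector $-\exponinv{\tilde{x}_t}(u)$ forces a gradient comparison across tangent spaces against a vector of length up to $\DOL$, producing per-round errors of order $\L\,\dist(\tilde{x}_t,x_{t+1}^{*})\cdot\DOL$ that are \emph{linear} in the step displacement and cannot be absorbed by the quadratic negative terms $-\dist(\tilde{x}_t,x_{t+1})^2$, $-\dist(\tilde{x}_t,x_t)^2$; summed over $T$ this destroys the rate. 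The paper instead decomposes $\ell_t(\tilde{x}_t)-\ell_t(u)=\bigl(\ell_t(\tilde{x}_t)-\ell_t(x_{t+1})\bigr)+\bigl(\ell_t(x_{t+1})-\ell_t(u)\bigr)$, uses g-convexity only for the first bracket (linearized toward the \emph{nearby} point $x_{t+1}$, so the gradient is only ever paired with the short vector $-\exponinv{\tilde{x}_t}(x_{t+1})$) and handles the second bracket purely in function values via the proximal inequality. Your displayed per-round inequality has the correct final shape, so the fix is local, but as written the first step would fail. Relatedly, your proposal leaves the hardest piece — transferring the hint gradient from the exact minimizer $\tilde{x}_t^{*}$ to the computed $\tilde{x}_t$, which in the paper requires Gauss's lemma plus a quantitative bound on $\norm{\exponinv{\tilde{x}_t^{*}}(x_{t+1})-\Gamma{\tilde{x}_t}{\tilde{x}_t^{*}}\exponinv{\tilde{x}_t}(x_{t+1})}$ and a careful two-parameter Young split — as an acknowledged obstacle rather than an executed step.
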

This result assumes that the hint function $\tilde{\ell}_t$ also satisfies smoothness and g-convexity, as $\ell_t$. In the linearized setting, the hint function is typically derived from the gradient of a function, such as the loss from the last round, as we do in our minmax application, making this a reasonable assumption. Further, the smoothness assumption can be removed when exact minimizers of the subproblems are found, as smoothness is only required to control the error induced in the inexact case.

We now discuss how the update rules in Lines~\ref{line:xtilde-criterion} and~\ref{line:x-criterion} can be implemented up to the required precision. Note that $\ell_t$ and $\tilde{\ell}_t$ are $\L $-smooth and g-convex in $\X$ by assumption. The regularizer $\frac{1}{2}\dist(\cdot,x_t)^{2}$ is $(\zeta_{\DOL} /\eta)$-smooth and  $(1/\eta)$-strongly g-convex, cf. \cref{cor:dist-squared-cond}, and hence $\Ft$ and $\Ftt$ are both $(1/\eta)$-strongly g-convex and $(\L +\zeta_{\DOL}/\eta)$-smooth functions.
Therefore, we can efficiently implement the update rule using \PRGD{} or \CRGD{}, two minimization methods with linear convergence rates in this setting, defined by the following update rules. For a $\bar{L}$-smooth function $F:\M\to \mathbb{R}$,
\begin{equation}
  \label{eq:prgd-update}
 x_{t+1}\gets \proj{\X}\left(\expon{x_t}\left(-\bar{L}^{-1} \nabla F(x_t)\right)\right), \tag{\PRGD{}}
\end{equation}
and for a composite function $F \defi f +g$
\begin{equation}
    x_{t+1}{\gets}\argmin_{y\in\X}\{\innp{\nabla f(x_t), \exponinv{x_t}(y)}{+}\frac{\bar{L}}{2}\dist(x_t, y)^2 {+} g(y)\},\tag{\CRGD{}}
\end{equation}
where $f$ is $\bar{L}$-smooth, see \cref{sec:g-conv-minim} for more details.
Note that $\Ft$ and $\Ftt$ have condition number $\L \eta +\zeta_{\DOL}$. In comparison, since the regularizer is $(1/\eta)$-strongly convex and smooth and the condition number becomes $\L \eta$ for the Euclidean space. This means that the curvature of the manifolds introduce an extra dependence on $\zeta_{\DOL}$,  since the convergence rate of \PRGD{} depends on the condition number. We can circumvent this issue by leveraging the composite structure of $\Ft$ and $\Ftt$ using \CRGD{}, as its convergence rate depends on a \emph{composite} condition number, in this case the smoothness constant of $\ell_t$ and $\tilde{\ell}_t$ divided by the strong g-convexity constant of the regularizer, i.e., $\L \eta$. Implementing each step of \CRGD{} requires solving a potentially more expensive subproblem than \PRGD{}. The subproblem in \PRGD{} can be implemented easily at the cost of introducing a dependence on $\zeta$ on the computational time of the algorithm.
In the following statement, we specify the gradient oracle complexity of implementing the implicit update steps using \PRGD{} and \CRGD{}.
\begin{restatable}[Implementing RIOD]{corollary}{implementingriod}\label{cor:implement-riod}\linktoproof{cor:implement-riod}
    For the implementation of the update rules in Lines~\ref{line:xtilde-criterion} and~\ref{line:x-criterion} of \cref{alg:riod}, we require $\bigotilde{(\L \eta+\zeta_{\DOL} )\zeta_{\tilde{R}}}$ gradient oracle calls to $\ell_t$ and $\tilde{\ell}_t$ at iteration $t$ using \PRGD{} or $\bigotilde{1+\L \eta }$ using \CRGD{} (this includes a logarithmic dependence on $\vert \kappamin\vert$). Here $\tilde{R}\defi \Lips /\L  + \DOL$, where $\Lips $ is the Lipschitz constants of $\ell_t$ and $\tilde{\ell}_t$ in $\X$.
   Note that these implementations do not require the knowledge of $\Lips $.
  \end{restatable}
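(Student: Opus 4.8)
The plan is to reduce the corollary to the known linear convergence rates of \PRGD{} and \CRGD{} for strongly g-convex and smooth minimization on Hadamard manifolds (as stated earlier and developed in \cref{sec:g-conv-minim}), and then to bound the number of iterations needed to reach the target accuracy $\varepsilon_t\dist(x_t,\cdot^\ast)^2$. First I would record the conditioning of the subproblems: by \cref{cor:dist-squared-cond} the regularizer $\tfrac1{2}\dist(\cdot,x_t)^2$ is $(1/\eta)$-strongly g-convex and $(\zeta_{\DOL}/\eta)$-smooth in $\X$, so $\Ft$ and $\Ftt$ are $(1/\eta)$-strongly g-convex and $(\L+\zeta_{\DOL}/\eta)$-smooth; equivalently they have condition number $\L\eta+\zeta_{\DOL}$. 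For \CRGD{} I would instead use the composite splitting $f=\ell_t$ (or $\tilde\ell_t$), which is $\L$-smooth, and $g=\tfrac1{2\eta}\dist(\cdot,x_t)^2$, which is $(1/\eta)$-strongly g-convex; the relevant composite condition number is then $\L\eta$.

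The core step is to translate ``linear convergence'' into an iteration count. A linearly convergent method started at $x_t$ produces, after $k$ steps, a point $z_k$ with $F(z_k)-F(z^\ast)\le (1-c)^k\,(F(x_t)-F(z^\ast))$ for a rate parameter $c$ depending on the (composite) condition number; by $(1/\eta)$-strong g-convexity this also controls $\dist(z_k,z^\ast)^2$. I would bound the initial gap $F(x_t)-F(z^\ast)$ crudely — e.g.\ via smoothness and the fact that $\dist(x_t,z^\ast)\le\DOL$, or directly since $F(x_t)-F(z^\ast)\le \tfrac{\L+\zeta_{\DOL}/\eta}{2}\dist(x_t,z^\ast)^2$ — and compare it to the target $\varepsilon_t\dist(x_t,z^\ast)^2$. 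Because $\varepsilon_t$ (from \cref{alg:riod}) is polynomial in $t$, $\eta$, $\L$, $\Lips$, $\DOL^{-1}$ and $\vert\kappamin\vert$, taking logarithms shows $k=\bigotilde{\text{condition number}}$ suffices, where the $\widetilde{O}$ hides the $\log t$, $\log\eta$, $\log\L$ and, crucially, $\log\vert\kappamin\vert$ factors — matching the claimed $\bigotilde{(\L\eta+\zeta_{\DOL})\zeta_{\tilde R}}$ and $\bigotilde{1+\L\eta}$ after accounting for one subtlety below. The factor $\zeta_{\tilde R}$ in the \PRGD{} bound arises because the \PRGD{} analysis (with closed-form metric projection onto $\X$, available for, e.g., Riemannian balls) pays an extra geometric penalty $\zeta_r$ over the ball of radius $r=\tilde R=\Lips/\L+\DOL$ in which the iterates of the subproblem solver provably remain; I would invoke the corresponding statement from \cref{sec:g-conv-minim}. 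Finally, I would note that neither solver needs to know $\Lips$: the precision $\varepsilon_t$ is a fixed input, and $\tilde R$ only enters the \emph{analysis} of how many iterations are run, not the iterations themselves — one simply runs the (known, logarithmic) number of steps dictated by the condition number, which the algorithm can compute from its inputs $\eta,\L,\DOL,\kappamin$ alone.

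The main obstacle I expect is the bookkeeping around the radius $\tilde R$ and the guarantee that the subproblem solvers' iterates stay within a ball of that radius — this is where the geometry genuinely enters, since on a Hadamard manifold the metric projection is nonexpansive but one still must certify that \PRGD{}/\CRGD{} do not wander off, and that the relevant smoothness/strong-convexity constants hold \emph{in that ball} (recall the warning after \cref{def:strong_g_convexity_of_F} that $\L/\mu$ is domain-dependent). Controlling the $\log\vert\kappamin\vert$ dependence and confirming it is genuinely only logarithmic (so it can be absorbed into $\bigotilde{\cdot}$) is the other point requiring care; everything else is a routine combination of the strong-convexity inequality, smoothness, and the explicit form of $\varepsilon_t$.
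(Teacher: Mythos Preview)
Your overall strategy --- invoking the linear convergence of \PRGD{} (\cref{fact:prgd-convergence}) and \CRGD{} (\cref{fact:crgd-convergence}, together with \cref{cor:composite-warm-start}) on the subproblems $\Ft,\Ftt$, whose (composite) condition numbers you correctly identify as $\L\eta+\zeta_{\DOL}$ and $\L\eta$ respectively --- is exactly what the paper does, and the iteration counts $\bigotilde{(\L\eta+\zeta_{\DOL})\zeta_{\tilde R}}$ and $\bigotilde{1+\L\eta}$ follow as you describe. However, your argument that the implementation does not require knowledge of $\Lips$ has a genuine gap. You claim that ``the precision $\varepsilon_t$ is a fixed input'' and that the algorithm ``can compute [the number of steps] from its inputs $\eta,\L,\DOL,\kappamin$ alone,'' but neither is true: inspecting \cref{alg:riod}, $\varepsilon_t$ explicitly contains the term $2\eta^2\Lips^2(\DOL^{-2}+48\vert\kappamin\vert)$, and for \PRGD{} the step budget itself scales with $\zeta_{\tilde R}$, which depends on $\Lips$ via $\tilde R=\Lips/\L+\DOL$. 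So as written, both the stopping target and (for \PRGD{}) the iteration budget require $\Lips$.

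The paper resolves this with an \emph{adaptive} stopping rule rather than a precomputed budget. The key observation (tracing back into the proof of \cref{thm:riod}) is that what the analysis actually needs is not the global $\Lips$ but the gradient norm $\norm{\nabla\tilde\ell_t(\tilde x_t)}$ at the \emph{returned} iterate --- a quantity computed for free as part of every gradient step. Concretely, one replaces $\Lips^2$ in the $\varepsilon_t$ formula by $\norm{\nabla\tilde\ell_t(\tilde x_t^\tau)}^2$ and, for \PRGD{}, additionally tracks the per-step contraction factors using the local radii $\tilde R_i=\norm{\nabla\Ftt(\tilde x_t^i)}/(\L+\zeta_{\DOL}/\eta)$, stopping once the running product $\tfrac{(\L+\zeta_{\DOL}/\eta)\zeta_{\tilde R_0}}{2}\prod_{i=1}^{\tau-1}(1-1/(4(\L\eta+\zeta_{\DOL})\zeta_{\tilde R_i}))$ drops below this adaptive $\varepsilon_t$. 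Since $\norm{\nabla\tilde\ell_t(\cdot)}\le\Lips$ on $\X$, the worst-case iteration count is unchanged, but the algorithm never references $\Lips$ directly. Your sketch misses this mechanism entirely; the rest of your plan is sound.
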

  Note that $\ell_t$ and $\tilde{\ell}_t$ are automatically Lipschitz continuous since they are differentiable in the compact set $\X$, so this does not need to be assumed separately.

\section{Min-Max}
\label{sec:min-max}
Consider the following possibly constrained min-max optimization problem
\begin{equation}
  \label{eq:P-minmax}
 \min_{x\in \X}\max_{y\in \Y} f(x,y) \tag{$\mathcal{P}$}
\end{equation}
where $f$ is g-convex, g-concave and $\L $-smooth and $\X\subseteq \M$ and $\Y \subseteq \N$.
We call a function $f$ g-convex, g-concave in $\X\times \Y$ if $f(\cdot,y)$ and $-f(x,\cdot)$ are g-convex for all $x\in \X$ and $y \in \Y$, and $\L $-smooth in $\X\times \Y$ if $\nabla_xf(\cdot,y)$, $\nabla_yf(x,\cdot)$, $\nabla_xf(x,\cdot)$, $\nabla_yf(\cdot,y)$ are $\L $-Lipschitz for all $x\in \X$ and $y\in \Y$.
We write $(x^{*},y^{*})$ for solutions of \cref{eq:P-minmax}, $\newtarget{def:initial_distance}{\R}\defi \dist(x_1,x^{*})+\dist(y_1,y^{*})$ for the initial distance and, if finite, $\newtarget{def:diam_in_minmax_alg_rioda}{\DMM} \defi \diam(\X)+\diam(\Y)$.

We introduce a new method to solve \cref{eq:P-minmax} based on updating $x$ and $y$ in parallel using \RIOD{}. We make the following variable choices for $x$: $\tilde{x}_{t}\gets \tilde{x}_{t}$, $x_t\gets x_t$, $\ell_t(x)\gets f(x,\tilde{y}_t)$, $\tilde{\ell}_t(x)\gets f(x,y_t)$ and $u_t\gets x^{*}$.
For  $y$, we set $x_t\gets y_t$, $\tilde{x}_{t}\gets \tilde{y}_t$, $\ell_t(x)\gets -f(\tilde{x}_t,y)$, $\tilde{\ell}_t(x)\gets -f(x_t,y)$, $u_t\gets y^{*}$.
We refer to the resulting algorithm as Riemannian Implicit Optimistic Gradient Descent-Ascent (\newtarget{def:acronym_rioda}{\RIODA{}}),
\begin{equation*}
  \begin{cases}
\tilde{x}_{t}{\gets}\text{approx.} \argmin_x \{f(x,y_t){+}\frac{1}{2\eta}\dist(x,x_t)^2\}\\
    \tilde{y}_{t}{\gets}\text{approx.} \argmax_{y}  \{f(x_t,y){-}\frac{1}{2\eta}\dist(y,y_t)^2\}\\
    x_{t+1}{\gets} \text{approx.} \argmin_x\{f(x,\tilde{y}_t){+}\frac{1}{2\eta}\dist(x,x_t)^2\}\\
     y_{t+1}{\gets} \text{approx.} \argmax_{y}\{f(\tilde{x}_t,y){-}\frac{1}{2\eta}\dist(y,y_t)^2\}.
  \end{cases}
\end{equation*}
See \cref{alg:rioda} for a detailed description of the algorithm.

A common method to solve min-max problems is to use proximal algorithms \citep{tseng1995linear}. However, these methods require solving a regularized min-max problem at every iteration.
However, for constrained min-max problems, we are not aware of any explicit method providing convergence rates.
In contrast, \RIODA{} only requires access to a g-convex minimization method, making it possible to implement the update rule using existing methods.
The only other Riemannian min-max method that can handle constraints RAMMA \citep{martinez2023acceleratedminmax} is also based on reducing the min-max problem to a series of minimization problems, see \cref{sec:related-work} for a discussion of this work.

\begin{restatable}[RIODA]{theorem}{rioda}\label{thm:rioda}\linktoproof{thm:rioda}
  Let $\M$, $\N$ be Hadamard manifolds with sectional curvature in $[\kappamin, 0]$ and $\X\subset \M$, $\Y\subset \N$ be compact and g-convex sets. Consider the $f:\M\times\N \to \mathbb{R}$, which is g-convex, g-concave and $\L $-smooth in $\X\times\Y$. Further, let $(x^{*},y^{*})$ be a saddle point of \cref{eq:P-minmax} and $(x_T,y_T)$ be the output of \cref{alg:rioda} after $T$ iterations. Then we have $f(x_T,y^{*})-f(x^{*},y_T)\le \epsilon$ after $T=\lceil\frac{8\L\R^2}{\epsilon}\rceil$ iterations, and $T=\lceil\frac{17L}{\mu}\log \left(\frac{4\L\R^2}{\epsilon}  \right)\rceil$, if $f$ is also $\mu$-strongly g-convex, strongly g-concave in $\X\times\Y$.
\end{restatable}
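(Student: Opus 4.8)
The plan is to reduce the min-max convergence to the regret bound of \cref{thm:riod-short} applied separately to the $x$-player and the $y$-player, using the standard online-learning-to-min-max reduction (cf.\ \citet[Chapter 11.5]{orabona2019modern}). Concretely, for the $x$-player I instantiate \RIOD{} with losses $\ell_t(x) = f(x,\tilde{y}_t)$ and hints $\tilde{\ell}_t(x) = f(x,y_t)$, and for the $y$-player with losses $\ell_t(y) = -f(\tilde{x}_t, y)$ and hints $\tilde{\ell}_t(y) = -f(x_t,y)$. Both families are g-convex (resp.\ $\mu$-strongly g-convex) and $\L$-smooth in $\X$ (resp.\ $\Y$) by the hypotheses on $f$, so \cref{thm:riod-short} (and its strongly-g-convex variant in \cref{thm:riod}) applies. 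Summing the two regret bounds at comparators $u = x^\ast$ and $u = y^\ast$, the cross terms $f(\tilde x_t, y^\ast) - f(x^\ast, \tilde y_t)$ telescope against $\sum_t [f(\tilde x_t,\tilde y_t) - f(\tilde x_t,\tilde y_t)] = 0$ after adding and subtracting $f(\tilde x_t,\tilde y_t)$, giving
\[
\sum_{t=1}^T \big(f(\tilde x_t, y^\ast) - f(x^\ast, \tilde y_t)\big) \le \frac{3\DMM^2 \text{-type term}}{\eta} + \eta\sum_{t=1}^T\Big(\norm{\nabla_x f(\tilde x_t,\tilde y_t) - \nabla_x f(\tilde x_t, y_t)}^2 + \norm{\nabla_y f(\tilde x_t,\tilde y_t) - \nabla_y f(x_t,\tilde y_t)}^2\Big).
\]
Here I use that the regularizer centers $x_t, y_t$ are exactly the secondary iterates shared between the two algorithms, and that $\R$ (not $\DMM$) controls the relevant distances because the comparator is the fixed saddle point; one must check the constant is $\tfrac{3}{2}\R^2$ after combining both players, matching the claimed $T = \lceil 8\L\R^2/\epsilon\rceil$.

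The key remaining step is to bound the hint-mismatch sum by something that telescopes or is absorbed by the left-hand side. Using $\L$-smoothness of $f$, $\norm{\nabla_x f(\tilde x_t, \tilde y_t) - \nabla_x f(\tilde x_t, y_t)} \le \L\, \dist(\tilde y_t, y_t)$ and similarly for the $y$-term, so the mismatch sum is at most $\L^2 \eta \sum_t (\dist(\tilde y_t, y_t)^2 + \dist(\tilde x_t, x_t)^2)$. I then need a \emph{stability} estimate showing $\dist(\tilde x_t, x_t)$ and $\dist(\tilde y_t, y_t)$ are small — this is where the implicit (proximal) structure pays off: both $\tilde x_t$ and $x_{t+1}$ are approximate proximal steps from the \emph{same} center $x_t$ on functions ($f(\cdot,y_t)$ and $f(\cdot,\tilde y_{t-1})$) that differ by a Lipschitz amount controlled by $\dist(y_t,\tilde y_{t-1})$; a nonexpansiveness-type argument for the (strongly convex) proximal map on Hadamard manifolds, together with the precision parameter $\varepsilon_t$ chosen in \cref{alg:riod}, gives $\dist(\tilde x_t, x_t)^2 \lesssim \eta^2 \L^2 \dist(\tilde y_{t-1}, y_t)^2 + (\text{approx.\ error})$. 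Choosing $\eta = \Theta(1/\L)$ makes the factor $\L^2\eta^2$ a small constant, so the mismatch sum is absorbed into the left side up to a geometric series, leaving the $O(\R^2/\eta) = O(\L\R^2)$ bound; dividing by $T$ and invoking g-convexity/g-concavity of $f$ (Jensen along geodesics, using that $x_T, y_T$ is chosen as the average iterate) converts the averaged regret into the duality gap $f(x_T, y^\ast) - f(x^\ast, y_T) \le \epsilon$.

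For the $\mu>0$ case the structure is the same but I use the strongly-g-convex regret bound, which contributes a $-\tfrac{\mu}{2}\sum_t \dist(\tilde x_t, x^\ast)^2$ (resp.\ for $y$) negative term; combined with a suitable choice $\eta = \Theta(1/\L)$ this yields linear contraction of $\dist(x_t,x^\ast)^2 + \dist(y_t,y^\ast)^2$ per round by a factor $1 - \Theta(\mu/\L)$, so after $T = \lceil \tfrac{17\L}{\mu}\log(4\L\R^2/\epsilon)\rceil$ rounds the gap is below $\epsilon$ (the constant $17$ tracking the precise contraction factor and the slack from inexactness). The main obstacle I anticipate is the stability/nonexpansiveness argument: on a Hadamard manifold the proximal operator of a strongly g-convex function is a contraction, but one must carefully propagate the inexactness (the $\varepsilon_t\dist(x_t,\cdot)^2$-minimizer criterion) through this bound and verify that the specific form of $\varepsilon_t$ in \cref{alg:riod} — which already contains the $\eta^2\L^2$ and curvature terms $\abs{\kappamin}$ — is exactly what is needed so that no residual dependence on $\zeta$ survives in the final rate. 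All the Riemannian trigonometric inequalities needed (e.g.\ comparing $\dist$ after parallel transport) are standard for non-positive curvature and require no lower curvature bound, which is why the final bound is $\zeta$-free.
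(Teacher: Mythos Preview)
Your plan has a genuine gap in the ``key remaining step.'' Applying \cref{thm:riod-short} as a black box leaves you with $\eta\sum_t\bigl(\norm{\nabla_x f(\tilde x_t,\tilde y_t)-\nabla_x f(\tilde x_t,y_t)}^2+\norm{\nabla_y f(\tilde x_t,\tilde y_t)-\nabla_y f(x_t,\tilde y_t)}^2\bigr)$, which after $\L$-smoothness becomes $\eta\L^2\sum_t(\dist(\tilde y_t,y_t)^2+\dist(\tilde x_t,x_t)^2)$. But your proposed stability bound $\dist(\tilde x_t,x_t)^2\lesssim \eta^2\L^2\dist(\tilde y_{t-1},y_t)^2$ is not correct: $\tilde x_t$ is a proximal step from center $x_t$ on $f(\cdot,y_t)$, so (even exactly) $\dist(\tilde x_t^\ast,x_t)=\eta\norm{\nabla_x f(\tilde x_t^\ast,y_t)}=\Theta(\eta\Lips)$ generically. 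A Lipschitz-prox comparison controls $\dist(\tilde x_t^\ast,x_{t+1}^\ast)$ (same center, nearby functions), not $\dist(\tilde x_t,x_t)$; the latter is a full proximal displacement and does not shrink or telescope. With $\eta=\Theta(1/\L)$ the mismatch sum is then $\Theta(T\Lips^2/\L)$, which grows linearly in $T$ and destroys the rate. A secondary issue: the black-box bound gives $\frac{3\DMM^2}{2\eta}$ with the diameter $\DMM$, not $\R^2$; the $\R^2$ in the claimed iteration count comes from telescoping $\dist(x_t,x^\ast)^2-\dist(x_{t+1},x^\ast)^2$, which is already replaced by $\DMM^2$ in the final statement of \cref{thm:riod-short}.

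The paper's proof avoids this by \emph{not} invoking \cref{thm:riod-short} as a black box. Instead it stops the \RIOD{} analysis at the intermediate inequality (what it packages as \cref{lem:reduction}, corresponding to \cref{eq:ioomd-bound-err}) which still carries the negative terms $-\frac{1}{2\eta}\dist(\tilde x_t^\ast,x_t)^2$ and $-\frac{1}{2\eta}\dist(\tilde x_t^\ast,x_{t+1})^2$ (and their $y$-analogues). The optimism inner products are then bounded by Cauchy--Schwarz, $\L$-smoothness and Young to obtain $\frac{\L}{2}\bigl(\dist(\tilde x_t,x_t)^2+\dist(\tilde y_t,y_{t+1})^2+\dist(\tilde y_t,y_t)^2+\dist(\tilde x_t,x_{t+1})^2\bigr)$, and after a triangle-inequality expansion each of these is absorbed by the retained negative terms precisely when $\eta=1/(4\L)$; this is what makes the rate curvature-independent. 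The per-step inequality \cref{eq:regret-xy-final-compact} then telescopes in $\dist(x_t,x^\ast)^2+\dist(y_t,y^\ast)^2$, giving the $\R^2$ (not $\DMM^2$) dependence, and the $\mu>0$ case follows from the same per-step contraction. To fix your plan, you must likewise open up the regret proof and keep those negative distance terms rather than invoke the final regret bound.
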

In order to obtain the full gradient oracle complexity of \RIODA{}, we need to take into account the number of gradient evaluations required to compute the update rule at every iterations. This is different from the online setting where we mainly care about \emph{regret} which does not necessarily coincide with the gradient oracle complexity.

In the following, we show that similarly to \RIOD{}, the update rule can be implemented efficiently. In particular, by our choice of $\eta=1/(4L)$, the condition number of the subproblems becomes $1/4+\zeta_{\DMM}$, and the \textit{composite condition number} becomes $1/4$.

\begin{restatable}[Implementing RIODA]{corollary}{implementrioda}\label{cor:implement-rioda}\linktoproof{cor:implement-rioda}
  We use the notation from \cref{alg:rioda}.
    For the implementation of the update rules in Lines~\ref{line:xy-tilde-criterion} and~\ref{line:xy-criterion} of \cref{alg:rioda}, we require $\bigotilde{\zeta_{\DMM} \zeta_{\tilde{R}}}$ gradient oracle calls per iteration using \PRGD{} or $\bigotilde{1}$ using \CRGD{} (this includes a logarithmic dependence on $\vert \kappamin\vert$). Here $\tilde{R}\defi \Lips /\L  + \DMM$, where $\Lips $ is the Lipschitz constant of $f$ in $\X\times \Y$. We refer to these algorithms as \RIODA{}\textsubscript{\PRGD{}} and \RIODA{}\textsubscript{\CRGD{}}, respectively. Note that these implementations do not require the knowledge of $\Lips $.
  \end{restatable}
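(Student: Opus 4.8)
The plan is to derive this directly from \cref{cor:implement-riod} by observing that \cref{alg:rioda} is simply \RIOD{} run in parallel on the $x$- and $y$-variables, so one iteration of \RIODA{} amounts to four \RIOD{}-style updates, each of which is covered by \cref{cor:implement-riod}. Indeed, all four subproblems solved in \cref{alg:rioda} have the form $\argmin\{\ell(\cdot)+\frac{1}{2\eta}\dist(\cdot,z)^2\}$ over a compact g-convex set: for $x$ the two ``losses'' are $\tilde{\ell}_t=f(\cdot,y_t)$ and $\ell_t=f(\cdot,\tilde{y}_t)$ on $\X$ with $z=x_t$, and for $y$ they are $-f(x_t,\cdot)$ and $-f(\tilde{x}_t,\cdot)$ on $\Y$ with $z=y_t$. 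Since $f$ is g-convex, g-concave, and $\L$-smooth on $\X\times\Y$, each such ``loss'' is g-convex and $\L$-smooth on the relevant slice. Moreover $f$ is $C^1$ on the compact set $\X\times\Y$, hence Lipschitz there with some constant $\Lips$, and each slice inherits a Lipschitz constant at most $\Lips$; so no separate Lipschitz assumption is needed, exactly as in \cref{cor:implement-riod}.

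With $\eta=1/(4\L)$ fixed as in \cref{alg:rioda}, the regularizer $\frac{1}{2\eta}\dist(\cdot,z)^2$ is $(1/\eta)$-strongly g-convex and $(\zeta_{\DMM}/\eta)$-smooth on any set of diameter at most $\DMM$ (using $\diam(\X),\diam(\Y)\le\DMM$ and monotonicity of $\zeta$ in its radius; cf. \cref{cor:dist-squared-cond}). Hence each subproblem objective is $(1/\eta)$-strongly g-convex and $(\L+\zeta_{\DMM}/\eta)$-smooth, with condition number $\L\eta+\zeta_{\DMM}=1/4+\zeta_{\DMM}=\Theta(\zeta_{\DMM})$ and composite condition number $\L\eta=1/4=\Theta(1)$. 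Applying \cref{cor:implement-riod} to each of the four subproblems, with its $\DOL$ replaced by $\diam(\X)$ or $\diam(\Y)$ (both $\le\DMM$) and with $\tilde{R}\defi\Lips/\L+\DMM$ in place of the per-slice radius $\Lips/\L+\diam(\X)$ or $\Lips/\L+\diam(\Y)$ (again by monotonicity of $\zeta$), gives $\bigotilde{(\L\eta+\zeta_{\DMM})\zeta_{\tilde{R}}}=\bigotilde{\zeta_{\DMM}\zeta_{\tilde{R}}}$ gradient-oracle calls per subproblem via \PRGD{}, or $\bigotilde{1+\L\eta}=\bigotilde{1}$ via \CRGD{}; the logarithmic $\vert\kappamin\vert$-dependence is inherited, since $\zeta_{\DMM},\zeta_{\tilde{R}}$ and the inner iteration counts in \cref{cor:implement-riod} already carry $\sqrt{\vert\kappamin\vert}$. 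Summing over the four subproblems only multiplies by a constant, yielding the stated per-iteration complexity for \RIODA{}\textsubscript{\PRGD{}} and \RIODA{}\textsubscript{\CRGD{}}.

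The $\Lips$-freeness is likewise inherited: \RIODA{} uses the \RIOD{} precision parameter $\varepsilon_t$ with these substitutions, and $\Lips$ enters $\varepsilon_t$ only inside a logarithm, so the linearly convergent inner solver can be terminated by a gradient-norm criterion that does not require knowing $\Lips$. I expect the only genuinely delicate point to be the one already dispatched inside \cref{cor:implement-riod} (and the \PRGD{}/\CRGD{} analyses of \cref{sec:g-conv-minim}): that the target accuracy $\varepsilon_t\dist(z,z^\ast)^2$ — where $z$ is the inner solver's starting point and $z^\ast$ the subproblem minimizer, a quantity that decays polynomially in $t$ and depends on $\Lips$ and $\vert\kappamin\vert$ — is reached within $\bigotilde{\cdot}$ inner iterations with only logarithmic dependence on those quantities, and that the \PRGD{} (resp. \CRGD{}) iterates stay in a region of diameter $\bigol{\tilde{R}}$ so that the condition-number bounds above remain valid throughout. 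Here these facts are simply invoked, and what remains is the routine parameter matching described above.
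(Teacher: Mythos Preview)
Your proposal is correct and follows essentially the same approach as the paper: reduce \RIODA{} to four \RIOD{}-style subproblems, verify the smoothness/strong g-convexity/Lipschitz properties of each (using $\eta=1/(4\L)$ so that $\L\eta=1/4$), and invoke \cref{cor:implement-riod} for both the \PRGD{} and \CRGD{} complexities as well as the $\Lips$-free stopping criterion. One minor imprecision: $\Lips$ does not enter $\varepsilon_t$ ``only inside a logarithm''---it appears directly in $\varepsilon_t$, but $\varepsilon_t$ itself sits inside the log in the iteration bound, and the $\Lips$-freeness comes from the computable gradient-norm termination rule shown in \cref{cor:implement-riod}, exactly as you say a sentence later.
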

  Since $\tilde{x}_t$ is independent of $\tilde{y}_t$, both steps can be implemented in parallel, the same holds for $x_{t+1}$ and $y_{t+1}$.

The rates of \RIODA{}\textsubscript{\PRGD{}} improve over RAMMA \citep{martinez2023acceleratedminmax}, the only other method for constrained, smooth and g-convex, g-concave min-max problems, by a factor of $\bigotilde{\zetar^{3.5}}$ and also do not require the knowledge of the initial distance $\R$ and the Lipschitz constant $\Lips $.

We can also use \RIODA{} to solve \cref{eq:P-minmax} \emph{without} in-manifold constraints as a reduction from \RIOD{}. In spite of the latter requiring a compact constraint, the reduction works because we can show that the iterates of \RIODA{} naturally stay in a ball around a solution $(x^{*},y^{*})$. Hence, one can add hypothetically any constraints which contain the ball to the problem without modifying the algorithm as they are guaranteed to never be active.

\begin{restatable}[RIODA -- unconstrained]{theorem}{riodaunconstrained}\label{thm:rioda-unconstrained}\linktoproof{thm:rioda-unconstrained}
  Let $\M$, $\N$ be Hadamard manifolds with sectional curvature in $[\kappamin, 0]$. Consider the bi-function $f:\M\times\N \to \mathbb{R}$, which is g-convex, g-concave and $\L $-smooth in $\Z=\ball(x^{*},8\R)\times \ball(y^{*},8\R)$, where $(x^{*},y^{*})$ is a saddle point of $f$. Then the iterates of \cref{alg:rioda} stay in $\Z$. Let $(x_T,y_T)$ be the output of \cref{alg:rioda} after $T$ iterations. Then we have $f(x_T,y^{*})-f(x^{*},y_T)\le \epsilon$ after $T=\lceil\frac{6\L \R^2}{\epsilon}\rceil$ iterations and $T=\lceil\frac{17L}{\mu}\log \left(\frac{2\L \R^2}{\epsilon}  \right)\rceil$ if $f$ is in addition $\mu$-strongly g-convex, strongly g-concave in $\Z$.
\end{restatable}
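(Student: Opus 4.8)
The plan is to deduce \cref{thm:rioda-unconstrained} from the constrained result \cref{thm:rioda} by running \cref{alg:rioda} with the constraint sets $\X\defi\ball(x^{*},8\R)$ and $\Y\defi\ball(y^{*},8\R)$, which are compact and g-convex, and on whose product $\Z$ the bi-function $f$ is g-convex, g-concave and $\L$-smooth by hypothesis; with this choice the implicit subproblems are well posed and, by \cref{thm:rioda}, the output satisfies the stated duality-gap decay. The whole content of the theorem is then the claim that these constrained iterates never touch the boundary of $\X\times\Y$: once this is shown, the metric projections in every update are inactive, so the produced sequence coincides exactly with that of the genuinely unconstrained algorithm, and the convergence analysis behind \cref{thm:rioda} (equivalently, the combination of the regret bound of \cref{thm:riod-short} for the $x$- and $y$-players against $u=x^{*}$ and $u=y^{*}$) applies verbatim, since it only ever invokes the regularity of $f$ on $\Z$ — the mildly sharper leading constants coming from running that analysis with the comparator distance $\R=\dist(x_1,x^{*})+\dist(y_1,y^{*})$ rather than a generic domain bound.

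First I would prove, by induction on $t$, that $\dist(x_t,x^{*})$, $\dist(\tilde x_t,x^{*})$, $\dist(y_t,y^{*})$, $\dist(\tilde y_t,y^{*})$ are all strictly below $8\R$ — in fact bounded by a fixed small multiple of $\R$ — the base case being immediate from $\dist(x_1,x^{*})+\dist(y_1,y^{*})=\R$. For the inductive step I treat the secondary iterates first: since $f(\cdot,\tilde y_t)+\frac{1}{2\eta}\dist(\cdot,x_t)^2$ is $\tfrac1\eta$-strongly g-convex (\cref{cor:dist-squared-cond}) and $x_{t+1}$ is an $\varepsilon_t\dist(x_t,x_{t+1}^{*})^2$-minimizer of it over $\X\ni x^{*}$, the approximate first-order optimality condition combined with the Hadamard (CAT(0)) law-of-cosines inequality $2\innp{\exponinv{x_{t+1}}(x_t),\exponinv{x_{t+1}}(x^{*})}\ge\dist(x_{t+1},x_t)^2+\dist(x_{t+1},x^{*})^2-\dist(x_t,x^{*})^2$ yields $\dist(x_{t+1},x^{*})^2\le\dist(x_t,x^{*})^2-\dist(x_{t+1},x_t)^2-2\eta(f(x_{t+1},\tilde y_t)-f(x^{*},\tilde y_t))+\mathrm{err}_t$, together with the mirror inequality for $y$. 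Adding the two, the bilinear terms in $f$ combine into the usual extragradient-type difference; using g-convexity and g-concavity of $f$ together with the $\L$-smoothness bound on $\dist(\tilde x_t,x_{t+1})$, $\dist(\tilde y_t,y_{t+1})$ to move the gradients to the common anchor $(\tilde x_t,\tilde y_t)$, this difference telescopes up to residuals absorbed by the $-\dist(x_{t+1},x_t)^2$, $-\dist(y_{t+1},y_t)^2$ terms (this is where $\eta=1/(4\L)$ enters). Summing over the first $t$ rounds and bounding $\sum_s\mathrm{err}_s$ — a small fraction of $\R^2$ by the precision schedule $\varepsilon_s$ of \cref{alg:riod} — keeps $\dist(x_{t+1},x^{*})^2+\dist(y_{t+1},y^{*})^2\le(2\R)^2$, say.

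Given that $x_{t+1},y_{t+1}$ and all earlier primaries already lie in $\Z$, I would then bound $\tilde x_{t+1}$, which is an approximate minimizer of $f(\cdot,y_{t+1})+\frac{1}{2\eta}\dist(\cdot,x_{t+1})^2$: comparing with the feasible point $x_{t+1}$ and using strong g-convexity gives $\tfrac{1}{2\eta}\dist(\tilde x_{t+1},x_{t+1})^2\le f(x_{t+1},y_{t+1})-f(\tilde x_{t+1},y_{t+1})+\mathrm{err}_{t+1}\le\norm{\nabla_x f(\xi,y_{t+1})}\,\dist(\tilde x_{t+1},x_{t+1})+\mathrm{err}_{t+1}$ for a point $\xi$ on the geodesic from $\tilde x_{t+1}$ to $x_{t+1}$. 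Since $(x^{*},y^{*})$ is a saddle point, $\nabla_x f(x^{*},y^{*})=0$, so $\L$-smoothness on $\Z$ gives $\norm{\nabla_x f(\xi,y_{t+1})}\le\L(\dist(\xi,x^{*})+\dist(y_{t+1},y^{*}))=\bigol{\L\R}$ by the induction hypothesis and the just-proved bound on the secondary iterates; with $\eta=1/(4\L)$ the resulting quadratic forces $\dist(\tilde x_{t+1},x_{t+1})=\bigol{\R}$, hence $\dist(\tilde x_{t+1},x^{*})<8\R$ after checking the absolute constants, and symmetrically for $\tilde y_{t+1}$. This closes the induction; with all iterates confirmed in $\Z$ the projections are inactive, the sequence equals that of unconstrained \RIODA{}, and both rates follow from the analysis of \cref{thm:rioda} with $\Z$ as the region on which $f$'s regularity is used.

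The main obstacle is closing this simultaneous induction quantitatively. One must (i) order the bookkeeping so that no gradient or Lipschitz bound used at round $t$ depends on $\Z$-membership of an iterate not yet certified — the secondary iterates before the primaries of the same round, and the primaries of round $t$ before the secondaries of round $t+1$ — thereby avoiding circularity; (ii) carry the bilinear $f$-terms through the paired $x$/$y$ descent inequalities so that they cancel or telescope rather than accumulate, which is precisely where the optimistic primary/secondary structure (as opposed to a single-step update) is essential; and (iii) verify that the summable inexactness schedule $\varepsilon_t$ contributes at most a small constant times $\R^2$ to the accumulated distance bound, so that the radius $8\R$ is genuinely never exceeded.
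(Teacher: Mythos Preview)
Your high-level strategy—show inductively that the iterates stay strictly inside $\Z$, so that any artificial constraint on $\ball(x^{*},8\R)\times\ball(y^{*},8\R)$ is inactive and the constrained and unconstrained trajectories coincide—is the same as the paper's. The gap is in the sentence ``by \cref{thm:rioda}, the output satisfies the stated duality-gap decay'': you cannot literally invoke \cref{thm:rioda}, because \cref{alg:rioda} uses \emph{different} precision schedules $\varepsilon_t$ in the constrained and unconstrained modes. The constrained $\varepsilon_t$ depends on the Lipschitz constant $\Lips$ and the target $\epsilon$, whereas the unconstrained $\varepsilon_t$ depends on $\R$ (and must moreover be checkable without knowing $\R$, cf.\ \cref{cor:implement-rioda-unconstrained}). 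Running the constrained analysis with $\X=\ball(x^{*},8\R)$ and the constrained $\varepsilon_t$ proves a statement about a different algorithm than the one \cref{thm:rioda-unconstrained} concerns; conversely, rerunning that analysis verbatim with the unconstrained $\varepsilon_t$ fails, because the fixed choice $E_x=E_y=\Theta(\epsilon)$ used in the proof of \cref{thm:rioda} no longer forces the error-control constants $C_t^x,C_t^y$ to be small.

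The paper addresses this not by invoking \cref{thm:rioda} but by reapplying the underlying reduction \cref{lem:reduction} with $E_x,E_y$ set proportional to $\dist(x_t,\tilde x_t^{*})^2$ and $\dist(y_t,\tilde y_t^{*})^2$ rather than to a fixed fraction of $\epsilon$. Combined with the bound $\norm{\nabla_x f(\tilde x_t,y_t)}=\bigol{\L\,\dist(x_t,\tilde x_t^{*})}$ obtained from the first-order optimality condition of $\tilde x_t^{*}$ (rather than from $\nabla f(x^{*},y^{*})=0$ as in your argument), this makes $C_t^x,C_t^y$ bounded by quantities the induction already controls, and is precisely what the unconstrained $\varepsilon_t$ is calibrated for. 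For the boundedness induction itself, the paper first bounds the \emph{exact} proximal points $\tilde x_t^{*},x_{t+1}^{*},\tilde y_t^{*},y_{t+1}^{*}$ via a Lipschitz-best-response argument (\cref{prop:xy-tilde-bound}) and then passes to the inexact iterates through the error criterion; this sidesteps the slight circularity in your bound on $\tilde x_{t+1}$, where the claim ``$\dist(\xi,x^{*})=\bigol{\R}$ by the induction hypothesis'' is not quite right—$\xi$ lies on a geodesic with endpoint $\tilde x_{t+1}$, which you have not yet controlled except through the pre-imposed constraint.
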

In the following, we quantify the complexity of implementing the update rules using the minimization algorithms \RGD{}, defined by the update rule for a $\bar{\L }$-smooth function
\begin{equation*}
x_{t+1}\gets \expon{x_t}\left(-\bar{L}^{-1}\nabla f(x_t)\right),
\end{equation*}
and \CRGD{}, see \cref{sec:g-conv-minim}. We can use CRGD for unconstrained problems even though the analysis only covers the compact case, since the composite condition number of $\tilde{L}_t$, i.e., $\L \eta=1/4$, is smaller than one and we show that in this case, the distance of the iterates of \CRGD{} to the optimizer is non-increasing \cref{cor:crgd-nonexp}. Hence we can add a hypothetical constraint which contains a ball around the optimizer with radius slightly larger than the initial distance to the optimizer, as this is guaranteed to never be inactive.
\begin{restatable}[Implementing RIODA -- unconstrained]{corollary}{implementriodaunconstrained}\label{cor:implement-rioda-unconstrained}\linktoproof{cor:implement-rioda-unconstrained}
  Consider the setting of \cref{thm:rioda-unconstrained}. Assume in addition that $f$ is g-convex, g-concave and $\L $-smooth in $\ball(x^{*},8\R)\times \ball(y^{*},8\R)$. Then we require $\bigotilde{1}$ gradient oracle calls for implementing the update rules in Lines~\ref{line:xy-tilde-criterion} and~\ref{line:xy-criterion} of \cref{alg:rioda} using \CRGD{} (this includes a logarithmic dependence on $\vert \kappamin\vert$) and the iterates stay in that set.
  If $f$ is g-convex, g-concave and $\L $-smooth in $\ball(x^{*},\bar{D})\times \ball(y^{*},\bar{D})$ with $\bar{D}\defi \R(13\zeta_{8\R}+9)$, then we require $\bigotilde{\zeta^2_R}$ gradient oracle calls using \RGD{} and the iterates stay in that set. We refer to these algorithms as \RIODA{}\textsubscript{\CRGD{}} and \RIODA{}\textsubscript{\RGD{}}, respectively. Neither method requires prior knowledge of the initial distance to the saddle point $\R$.
\end{restatable}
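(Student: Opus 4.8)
The plan is to handle the \CRGD{}- and \RGD{}-based implementations separately, each time combining three ingredients: (i) the a-priori localization of the outer iterates $x_t,y_t,\tilde x_t,\tilde y_t$ and of the exact subproblem solutions $\tilde x_t^\ast, x_{t+1}^\ast$ (and their $y$-analogues) established in the proof of \cref{thm:rioda-unconstrained}, which in particular places these points well inside the balls on which $f$ is assumed well-behaved; (ii) a non-expansiveness property of the inner solver confining its whole trajectory, so that running it \emph{without} any projection coincides with running the constrained version and never leaves the relevant ball; and (iii) the linear convergence rate of the inner solver, used to convert the relative accuracy $\varepsilon_t\dist(x_t,\tilde x_t^\ast)^2$ (resp.\ with $x_{t+1}^\ast$) demanded in Lines~\ref{line:xy-tilde-criterion}--\ref{line:xy-criterion} into an iteration count; the suppressed logarithms absorb the polynomial-in-$t$ size of $\varepsilon_t$ and the stated logarithmic dependence on $\abs{\kappamin}$. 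Throughout, each subproblem has the composite form $\tilde\ell_t+\frac1{2\eta}\dist(\cdot,x_t)^2$ (and similarly for $\ell_t$), with $\tilde\ell_t$ one of $f(\cdot,y_t),-f(x_t,\cdot),\dots$, which is $\L$-smooth, while the regularizer is $(1/\eta)$-strongly g-convex and $(\zeta_\rho/\eta)$-smooth on a radius-$\rho$ ball (\cref{cor:dist-squared-cond}); with $\eta=1/(4\L)$ the composite condition number is $\L\eta=\tfrac14$ and the ordinary one on a radius-$\rho$ ball is $\L\eta+\zeta_\rho=\tfrac14+\zeta_\rho$.

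For \RIODA{}\textsubscript{\CRGD{}}, the smooth term driving the \CRGD{} step is $\tilde\ell_t$ itself, so the step size $1/\L$ requires no knowledge of $\R$, and since \CRGD{}'s rate is governed by the composite condition number $\tfrac14\le1$, the required relative accuracy is reached in $\bigotilde1$ steps. For containment, \cref{cor:crgd-nonexp} shows that when the composite condition number is $\le1$ the distance of a \CRGD{} iterate to the subproblem minimizer is non-increasing, so every inner iterate of the $t$-th subproblem lies in $\ball(\tilde x_t^\ast,\dist(x_t,\tilde x_t^\ast))$; since $\dist(x_t,\tilde x_t^\ast)=\eta\norm{\nabla\tilde\ell_t(\tilde x_t^\ast)}$ by first-order optimality and $\tilde x_t^\ast$, $x_t$ lie near $x^\ast$ by the localization of \cref{thm:rioda-unconstrained} (where the hypothetical constraint $\ball(x^\ast,8\R)$ is shown to be inactive), this enclosing ball is contained in $\ball(x^\ast,8\R)$, with the symmetric statement for the $y$-subproblems. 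In particular no projection is ever triggered and the \CRGD{} iterates stay in $\Z$.

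For \RIODA{}\textsubscript{\RGD{}} the composite structure is not used, so the step size must be $1/\bar L$ with $\bar L$ the full smoothness of the subproblem, equal to $\L+\zeta_\rho/\eta$ on a radius-$\rho$ ball and hence dependent on how far the trajectory strays from $x_t$; this circularity is resolved by fixing the radius $\bar D\defi\R(13\zeta_{8\R}+9)$ in advance, calibrating the step size to the smoothness $\L+\zeta_{\bar D}/\eta$ on $\ball(x^\ast,\bar D)\times\ball(y^\ast,\bar D)$, and then verifying a posteriori that the trajectory never escapes that ball — \RGD{} with step size $1/\bar L$ on a strongly g-convex $\bar L$-smooth function is non-expansive toward the minimizer, so the trajectory lies in $\ball(\tilde x_t^\ast,\dist(x_t,\tilde x_t^\ast))$, and tracking the constants (first step of size at most $\norm{\nabla\tilde\ell_t(x_t)}/\bar L$ with $\norm{\nabla\tilde\ell_t(x_t)}\le\bigo{\L\bar D}$ since $\nabla f(x^\ast,y^\ast)=0$ by saddle-point optimality and $f$ is $\L$-smooth on $\ball(x^\ast,\bar D)\times\ball(y^\ast,\bar D)$, plus the localization of \cref{thm:rioda-unconstrained}) shows that $\bar D=\R(13\zeta_{8\R}+9)$ suffices. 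On $\ball(x^\ast,\bar D)$ the ordinary condition number is $\tfrac14+\zeta_{\bar D}$, and since $\zeta_r=\Theta(1+r\sqrt{\abs{\kappamin}})$ and $\bar D=\Theta(\R\zeta_R)$ give $\zeta_{\bar D}=\Theta(\zeta_R^2)$, each subproblem is solved in $\bigotilde{\zeta_R^2}$ \RGD{} steps; the only place $\bar D$ (hence $\R$) enters the run is the step size, which may instead be set by a backtracking line search finding, at each trajectory point, a step of size at least $1/\bar L$, so this implementation too needs no prior knowledge of $\R$. The delicate point is precisely this last self-consistency argument for \RGD{}: choosing $\bar D$ large enough that the trajectory provably cannot leave the ball on whose smoothness the step size was calibrated, which is where the particular value in $\bar D=\R(13\zeta_{8\R}+9)$ comes from.
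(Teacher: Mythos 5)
Your \CRGD{} half is essentially the paper's argument: contraction of \CRGD{} toward the subproblem minimizer when the composite condition number is at most one (\cref{cor:crgd-nonexp}), localization of $x_t,\tilde x_t^\ast,x_{t+1}^\ast$ near $(x^\ast,y^\ast)$ via \cref{prop:xy-tilde-bound}, and the $\bigotilde{1}$ iteration count from the rate in \cref{fact:crgd-convergence}. That part is fine.

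The \RGD{} half has a genuine gap. You assert that \RGD{} with step size $1/\bar L$ on a strongly g-convex, $\bar L$-smooth function ``is non-expansive toward the minimizer, so the trajectory lies in $\ball(\tilde x_t^\ast,\dist(x_t,\tilde x_t^\ast))$.'' This is a Euclidean fact that is not available here: the guarantee the paper actually has (\cref{fact:rgd-convergence}, from \citet{martinez2024convergence}) only confines the \RGD{} trajectory to $\ball(\tilde x_t^\ast,\tfrac{1+\sqrt 5}{2}\zeta_{\dist(x_t,\tilde x_t^\ast)}\dist(x_t,\tilde x_t^\ast))$, i.e.\ the iterates may move \emph{away} from the subproblem minimizer by a factor $\Theta(\zeta)$. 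Your non-expansiveness claim is also internally inconsistent with the value of $\bar D$ you are asked to derive: if the trajectory really stayed in $\ball(\tilde x_t^\ast,\dist(x_t,\tilde x_t^\ast))$, then $\bar D=\bigo{\R}$ would suffice, giving $\zeta_{\bar D}=\bigo{\zetar}$ and an $\bigotilde{\zetar}$ complexity — but the stated bound is $\bar D=\R(13\zeta_{8\R}+9)$ and $\bigotilde{\zeta_{\R}^2}$ precisely because the $\tfrac{1+\sqrt5}{2}\zeta_{8\R}\cdot 8\R\le 13\R\zeta_{8\R}$ expansion must be absorbed into the ball on which smoothness of the subproblem (hence the step size and the rate) is calibrated. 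Your ``tracking the constants shows $\bar D=\R(13\zeta_{8\R}+9)$ suffices'' therefore does not follow from the mechanism you describe.

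A second, smaller gap: for \RGD{} you dispose of the dependence on $\R$ by proposing a backtracking line search for the step size, but the dependence the paper must remove is in the \emph{stopping rule}: the target accuracy $\varepsilon_t\dist(x_t,\tilde x_t^\ast)^2$ involves the unknown $\tilde x_t^\ast$, and the iteration count $\bigotilde{\zeta_{\bar D}}$ involves $\R$. The paper resolves this with a computable certificate — it shows via $(1/\eta)$-strong g-convexity that $\norm{\nabla \Ftt(\tilde x_t)}^2\le \tilde\varepsilon_t$ with $\tilde\varepsilon_t$ expressed through the observable $\dist(x_t,\tilde x_t)$ implies the required function-gap criterion — and replaces $\R^2$ in $\varepsilon_t$ by $\dist(x_t,\tilde x_t^{\tau})^2$. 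Your proposal does not supply any verifiable termination test.
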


Compared to RIPPA \citep{martinez2023acceleratedminmax}, the only other min-max algorithm which does not require knowledge of $\R$, the complexity of \RIODA{}\textsubscript{\RGD{}} is reduced by a factor of $\bigotilde{\zetar ^2}$.
Note that the rates of \RIODA{}\textsubscript{\CRGD{}} recover the optimal Euclidean rates and are independent $\zeta$, up to log factors. Further, note that implementing the subroutines of RAMMA with \RGD{} leads to total oracle complexities of $\bigotilde{\zeta^{2.5}_{\R}\frac{\L \R^2}{\epsilon}}$ and $\bigotilde{\zeta^{2.5}_{\R}\frac{\L }{\mu}}$ for $\mu=0$ and $\mu>0$, respectively. This means that the rates of \RIODA{}\textsubscript{\RGD{}} improve over the prior state of the art in the constrained setting by $\bigo{\zeta^{2.5}_{\R}}$.

We validate our theoretical results in \cref{sec:experiments} on a constrained formulation of the robust Karcher mean problem, testing \RIODA{}\textsubscript{\PRGD{}} on both the symmetric positive definite manifold and the hyperbolic space.

It might seem that the nearly curvature-independent rates of $T=\bigotilde{\L \R^2/\epsilon}$ and $T=\bigotilde{\L /\mu}$ for $\mu=0$ and $\mu>0$, respectively, achieved by \RIODA{}\textsubscript{\CRGD{}}, are incompatible with existing lower bounds.
Indeed, \citet{criscitiello2023curvature} established a lower bound of $T=\bigomegatilde{\zetar }$ gradient oracle queries for $\L $-smooth and g-convex minimization problems in the hyperbolic space, which are a special case of our g-convex, g-concave and $\L $-smooth min-max setting.

For $\mu=0$, the lower bound relies on the assumption that $\epsilon=\bigo{\L \R^2\zetar^{-1}}$, which implies that our upper bound of $T =\bigotilde{\L \R^2/\epsilon}$ is larger than $\bigomegatilde{\zetar}$, showing that the lower and upper bounds are consistent.
Note that the lower bound is based on a function defined in the hyperbolic space, where the initial gap is $\bigotilde{\L \R^2/\zetar}$ holds \citep[Proposition 13]{criscitiello2023curvature}, meaning that their assumption on $\epsilon$ is justified.

For $\mu>0$, consider that the condition number of functions in the hyperbolic space in a ball of radius $\R$ is lower bounded by $\L /\mu \geq \zetar$ \citep[Proposition 29]{martinez2020global}. Hence our upper bound does not contradict the lower bound. \citet{criscitiello2022negative} also develop similar lower bounds for the case $\mu >  0$ in a broader class of Hadamard manifolds. A similar argument via \citep[Proposition 53]{criscitiello2023curvature} shows there is no contradiction with our bounds either.

This shows that while the rates of \RIODA{}\textsubscript{\CRGD{}} do not explicitly depend on $\zetar$ (up to a log factor), they inevitably have an implicit dependence on the geometry, which is due to the fact above regarding the minimum condition number for this class of problems depending on the geometry.
For well-conditioned functions or if the required precision is not too high, the hardness arising from the geometry dominates whereas whenever $\L /\mu\gg\zetar$ or $\L \R^2/\epsilon\gg\zetar$ the hardness arising from the function dominates.

\section{Conclusion}
\label{sec:conclusion}
We presented \RIOD{}, a new optimistic online learning algorithm for Riemannian manifolds which can handle in-manifold constraints and achieves regret bounds that match the best known Euclidean rates. Based on \RIOD{}, we developed \RIODA{}, a novel algorithm for min-max optimization on Riemannian manifolds. We proposed multiple implementations of \RIODA{}, with one variant achieving convergence rates that match the Euclidean rates up to logarithmic factors, for the first time.

\clearpage

\section*{Acknowledgements}
This research was partially funded by the Research Campus Modal funded by the German Federal Ministry of Education and Research (fund numbers 05M14ZAM,05M20ZBM) as well as the Deutsche Forschungsgemeinschaft (DFG) through the DFG Cluster of Excellence MATH$^+$ (EXC-2046/1, project ID 390685689). David Martínez-Rubio was partially funded by the project IDEA-CM (TEC-2024/COM-89).
\bibliography{refs}
\bibliographystyle{icml2025}
\newpage
\appendix
\onecolumn
\section{RIOD proof}
\riodshort*
\begin{proof}\linkofproof{thm:riod-short}
The proof follows directly from \cref{thm:riod} by setting $u_t=u$ for all $t\in[T]$ and $\mu=0$.
\end{proof}
\begin{theorem}[RIOD]\label{thm:riod}
    Let $\mathcal{M}$ be a Hadamard manifold with sectional curvature in $[\kappamin, 0]$. Further, let $\ell_t,\tilde{\ell}_t:\mathcal{M}\rightarrow \mathbb{R}$ be $\mu$-strongly g-convex, and $\L $-smooth in a compact and g-convex set $\X\subseteq \mathcal{M}$ with $\diam (\X)=\DOL$ and $\mu\ge 0$.
  Then for $\eta>0$ we have for any $(u_t)_{t\in [T]}\in \X$ that
  \begin{align*}
    &\sum_{t=1}^{T}\ell_t(\tilde{x}_t)-\ell_t(u_t) \le \frac{  2\PT \DOL +3\DOL^2}{2\eta}\\
    &+ \sum_{t=1}^T\left(\eta\norm{\nabla\ell_t(\tilde{x}_t)-\nabla\tilde{\ell}_t(\tilde{x}_t)}^2-\frac{\mu}{4}\dist(x_{t+1},u_t)^2\right),
\end{align*}
    where $\newtarget{def:path_variation}{\PT}\defi\sum_{t=1}^T\dist(u_t,u_{t+1})$.
\end{theorem}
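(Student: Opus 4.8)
The plan is to follow the standard two-step OOMD analysis (as in \citet{rakhlin2013optimization}, \citet{Joulani2017modular}), but carried out entirely with the implicit (full-loss) updates on a Hadamard manifold, and to track the inexactness budget $\varepsilon_t$ explicitly. The backbone is a ``ghost iterate'' telescoping argument. First I would derive the two key one-step inequalities from the approximate optimality conditions. For the secondary step $x_{t+1}$, which is an $(\varepsilon_t \dist(x_t,x_{t+1}^\ast)^2)$-minimizer of $\Ft(x)=\ell_t(x)+\frac1{2\eta}\dist(x,x_t)^2$: using that $\Ft$ is $(1/\eta)$-strongly g-convex and the Riemannian law-of-cosines comparison for Hadamard manifolds (where $\dist(\cdot,\cdot)^2$ behaves at least as well as in Euclidean space, i.e.\ $\dist(y,z)^2 \le \dist(x,z)^2 - 2\innp{\exponinv{x}(y),\exponinv{x}(z)}_x + \dist(x,y)^2$ holds without a curvature penalty because $\kappamax\le 0$), one obtains for every $u\in\X$ a bound of the form
\[
  \eta\big(\ell_t(x_{t+1}) - \ell_t(u)\big) \le \tfrac12\dist(x_t,u)^2 - \tfrac12\dist(x_{t+1},u)^2 - \tfrac12\dist(x_t,x_{t+1})^2 - \tfrac{\eta\mu}{2}\dist(x_{t+1},u)^2 + (\text{error}_t),
\]
with the error controlled by $\varepsilon_t\dist(x_t,x_{t+1}^\ast)^2$ together with the smoothness/Lipschitz constants (this is where $\L$ and the $\kappamin$-dependent terms in $\varepsilon_t$ enter). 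An analogous inequality holds for the primary step $\tilde x_t$ with $\tilde\ell_t$ in place of $\ell_t$ and $\tilde x_t^\ast$ in place of $x_{t+1}^\ast$. The inexactness of $x_{t+1}$ versus $x_{t+1}^\ast$ must also be converted into a bound on $\dist(x_{t+1},x_{t+1}^\ast)$ via strong g-convexity, so that the ``$\varepsilon_t\dist(\cdot)^2$'' slack can be absorbed into the telescoping distance terms; the precise form of $\varepsilon_t$ in the algorithm is engineered exactly so that this absorption works with room to spare.

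Next I would combine the two inequalities at round $t$ in the optimistic fashion: write $\ell_t(\tilde x_t) - \ell_t(u_t)$ as $[\tilde\ell_t(\tilde x_t) - \tilde\ell_t(u_t)] + [\ell_t(x_{t+1}) - \ell_t(u_t)] - [\ell_t(x_{t+1}) - \ell_t(\tilde x_t)] - [\tilde\ell_t(\tilde x_t)-\tilde\ell_t(u_t)] + [\text{cross terms}]$, the point being that the ``bad'' term $\ell_t(x_{t+1}) - \ell_t(\tilde x_t)$ (which is not of telescoping type) is handled by the hint: since $\tilde x_t$ minimizes (approximately) $\tilde\ell_t + \frac1{2\eta}\dist(\cdot,x_t)^2$ and $x_{t+1}$ minimizes (approximately) $\ell_t + \frac1{2\eta}\dist(\cdot,x_t)^2$, g-convexity and smoothness give $\ell_t(x_{t+1}) - \ell_t(\tilde x_t) \le \innp{\nabla\ell_t(\tilde x_t) - \nabla\tilde\ell_t(\tilde x_t), \exponinv{\tilde x_t}(x_{t+1})}_{\tilde x_t} - (\text{negative quadratic in }\dist(\tilde x_t,x_{t+1}))$, and then Young's inequality splits this into the $\eta\norm{\nabla\ell_t(\tilde x_t)-\nabla\tilde\ell_t(\tilde x_t)}^2$ term and a $\frac1{4\eta}\dist(\tilde x_t,x_{t+1})^2$ term that is killed by the negative quadratics produced in the one-step inequalities. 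The remaining distance terms telescope after accounting for the moving comparator: $\sum_t [\dist(x_t,u_t)^2 - \dist(x_{t+1},u_{t+1})^2]$ plus drift terms $\dist(x_{t+1},u_t)^2 - \dist(x_{t+1},u_{t+1})^2$, and bounding these via $\abs{\dist(x_{t+1},u_t)^2 - \dist(x_{t+1},u_{t+1})^2} \le \DOL\cdot\dist(u_t,u_{t+1})$ (triangle inequality plus the $\DOL$-diameter bound) produces the $2\PT\DOL$ contribution; the $3\DOL^2$ accounts for the initial distance term and leftover slack from $\varepsilon_t$.

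The main obstacle I expect is the careful bookkeeping of the inexactness, specifically showing that the accumulated errors from using $\varepsilon_t$-minimizers rather than exact minimizers — which appear as additive $\varepsilon_t\dist(x_t,x_{t+1}^\ast)^2$ and $\varepsilon_t\dist(x_t,\tilde x_t^\ast)^2$ terms, and as discrepancies $\dist(x_{t+1},x_{t+1}^\ast)$, $\dist(\tilde x_t,\tilde x_t^\ast)$ that propagate into the inner products and into the ``$\dist(x_t,\cdot)$'' arguments of the regularizer — can all be dominated by a fixed constant times $\DOL^2$ (hence the $(t+1)^{-2}$ decay and the explicit smoothness/curvature/Lipschitz constants baked into $\varepsilon_t$), rather than accumulating with $T$. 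This requires replacing every occurrence of an exact iterate by its approximation, tracking each substitution's cost through strong g-convexity ($\frac1\eta\dist(x,x^\ast)^2 \le 2(\Ft(x) - \Ft(x^\ast))$) and through Lipschitzness of the losses and of $\dist(\cdot,x_t)^2$ on $\X$, and verifying the telescoped sum of slack is $O(\DOL^2/\eta)$. The Riemannian geometry itself enters only mildly: because $\M$ is Hadamard, $\dist(\cdot,\cdot)^2$ is $1$-strongly g-convex with the ``correct'' (Euclidean-like) law of cosines and is $\zeta_{\DOL}$-smooth on $\X$, so no curvature constant appears in the leading regret — the curvature shows up only inside $\varepsilon_t$ (the $\abs{\kappamin}$ term), i.e.\ in how hard the subproblems are to solve, not in the regret bound, which is exactly the advertised improvement over \citet{hu2023minimizing}.
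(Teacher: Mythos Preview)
Your plan is essentially the paper's approach: the same two-step OOMD decomposition $\ell_t(\tilde{x}_t)-\ell_t(u_t)=[\ell_t(\tilde{x}_t)-\ell_t(x_{t+1})]+[\ell_t(x_{t+1})-\ell_t(u_t)]$, the same cosine-law one-step inequalities at the \emph{exact} minimizers $x_{t+1}^\ast,\tilde{x}_t^\ast$, Young's inequality for the optimism term, and the same path-length telescoping for the moving comparator. (Your written decomposition has redundant cancelling brackets and a sign slip in the direction of the ``bad-term'' inequality, but the intent is clear.)

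One point you underestimate deserves emphasis, because it is precisely where the curvature enters $\varepsilon_t$. The first-order optimality of $\tilde{x}_t^\ast$ gives the hint inner product $\innp{\nabla\tilde{\ell}_t(\tilde{x}_t^\ast),\exponinv{\tilde{x}_t^\ast}(x_{t+1})}$ in the tangent space at $\tilde{x}_t^\ast$, whereas the optimism term you want is $\innp{\nabla\tilde{\ell}_t(\tilde{x}_t),\exponinv{\tilde{x}_t}(x_{t+1})}$ at $\tilde{x}_t$. Converting one into the other is not just ``Lipschitzness of the loss'' and ``Lipschitzness of $\dist(\cdot,x_t)^2$'': after parallel transport you are left with a residual $\innp{\Gamma{\tilde{x}_t}{\tilde{x}_t^\ast}\nabla\tilde{\ell}_t(\tilde{x}_t),\ \exponinv{\tilde{x}_t^\ast}(x_{t+1})-\Gamma{\tilde{x}_t}{\tilde{x}_t^\ast}\exponinv{\tilde{x}_t}(x_{t+1})}$, and bounding $\norm{\exponinv{\tilde{x}_t^\ast}(x_{t+1})-\Gamma{\tilde{x}_t}{\tilde{x}_t^\ast}\exponinv{\tilde{x}_t}(x_{t+1})}$ costs a factor $\zeta_{\bar D}$ (smoothness of $\exponinv{\cdot}(p)$, not of $\dist(\cdot,x_t)^2$). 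The paper then squares this via Young and bounds $\zeta_{\bar D}^2$ by $2(1+2|\kmin|(\ldots))$, which is exactly the origin of the $48|\kmin|$ term in $\varepsilon_t$. Your proposal treats this as routine bookkeeping, but it is the most delicate Riemannian step and the reason the inexactness criterion has the specific form it does.
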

\label{sec:riod-proof}
\begin{proof}[Proof of \cref{thm:riod}]
  Using the notation of \cref{alg:riod}, we define $\bar{\epsilon}_t\defi 2\eta \varepsilon_t$ and recall that $\Ft(x)=\ell_t(x)+\frac{1}{2\eta}\dist(x,x_t)^2$ and $\Ftt(x)= \tilde{\ell}_{t}(x)+\frac{1}{2\eta}\dist(x,x_t)^2$.
  Note that $\Ft$ and $\Ftt$ are $(1/\eta)$-strongly g-convex in $\X$, because the regularizer $\frac{1}{2\eta}\dist(x,x_t)^2$ is $(1/\eta)$-strongly g-convex in $\X$ by the first part of \cref{fact:cosine_law_riemannian} since $\nabla_x (\frac{1}{2}\dist(x,y)^2)=-\exponinv{x}(y)$ and $\M$ is a Hadamard manifold and so $\delta_{\! \DOL}=1$.
  By the $(1/\eta)$-strong g-convexity of $\Ft$ and $\Ftt$ in $\X$,the error criteria of $x_{t+1}$ and $\tilde{x}_t$ in \cref{alg:riod} and the optimality of $x_{t+1}^{*}$ and $\tilde{x}^{*}_t$, we have that
  \begin{equation}\label{eq:Ft-criterion}
   \dist(x_{t+1},x_{t+1}^{*})^2\le 2\eta(F_t(x_{t+1})-F_t(x_{t+1}^{*}))\le \bar{\epsilon}_t\dist(x_t,x_{t+1}^{*})^2
 \end{equation}
 and
 \begin{equation}\label{eq:Ftt-criterion}
  \dist(\tilde{x}_t,\tilde{x}_t^{*})^2\le 2\eta(\Ftt(\tilde{x}_t)-\Ftt(\tilde{x}_t^{*}))\le \bar{\epsilon}_t \dist(x_t,\tilde{x}_t^{*})^2.
\end{equation}
We also have
  \begin{equation}
    \label{eq:omd-bound}
    \begin{aligned}
            &\ell_t(x_{t+1})-\ell_t(u_t)=  \ell_t(x_{t+1})-\ell_t(x_{t+1}^{*})+\ell_t(x_{t+1}^{*})-\ell_t(u_t)\\
&\circled{1}[\le] \frac{1}{2\eta}((1+\bar{\epsilon}_t)\dist(x_{t+1}^{*},x_t)^2-\dist(x_{t+1},x_t)^2)+\innp{\nabla\ell_t(x^{*}_{t+1}),-\exponinv{x_{t+1}^{*}}(u_t)} -  \frac{\mu}{2}\dist(x_{t+1}^{*},u_t)^2\\
&\circled{2}[\le] \frac{1}{2\eta}((1+\bar{\epsilon}_t)\dist(x_{t+1}^{*},x_t)^2-\dist(x_{t+1},x_t)^2 )-\frac{1}{\eta}\innp{\exponinv{x^{*}_{t+1}}(x_t),\exponinv{x^{*}_{t+1}}(u_t)}-  \frac{\mu}{2}\dist(x_{t+1}^{*},u_t)^2\\
                                       &\circled{3}[\le] \frac{1}{2\eta} \left( \dist(x_t,u_t)^2-(1+\mu\eta)\dist(x^{*}_{t+1},u_t)^2-\dist(x_t,x_{t+1})^2+\bar{\epsilon}_t \dist(x_t,x_{t+1}^{*})^2 \right)
    \end{aligned}
  \end{equation}
    where $\circled{1}$ holds by the error criterion of $x_{t+1}$ defined in Line~\ref{line:x-criterion}, i.e., $\Ft(x_{t+1})-\Ft(x_{t+1}^{*})\le \frac{\bar{\epsilon}_t}{2\eta}\dist(x_t,x_{t+1}^{*})^2$, and the \(\mu\)-strong g-convexity of $\ell_t$ between $\tilde{x}_{t+1}^{*}$ and $u_t$. Further, $\circled{2}$ holds since $x_{t+1}^{*}$ is the optimizer of $\Ft$ and the first-order optimality condition, i.e., $\innp{\nabla\ell_t(x^{*}_{t+1})-\frac{1}{\eta}\exponinv{x_{t+1}^{*}}(x_t),-\exponinv{x_{t+1}^{*}}(u_t)}\le 0$ and $\circled{3}$ holds by \cref{fact:cosine_law_riemannian} noting that $\delta_{\! \DOL}=1$ since $\mathcal{M}$ is a Hadamard manifold. Further, we have
  \begin{equation}
    \label{eq:hint-bound}
    \begin{aligned}
\innp{\nabla\tilde{\ell}_t(\tilde{x}_t^{*}),-\exponinv{\tilde{x}^{*}_{t}}(x_{t+1})} &\circled{1}[\le] -\frac{1}{\eta}\innp{\exponinv{\tilde{x}^{*}_{t}}(x_t),\exponinv{\tilde{x}^{*}_{t}}(x_{t+1})}\\
      &\circled{2}[\le] \frac{1}{2\eta} \left( \dist(x_t,x_{t+1})^2-\dist(\tilde{x}^{*}_{t},x_{t+1})^2-\dist(\tilde{x}^{*}_t,x_{t})^2  \right),
    \end{aligned}
  \end{equation}
  where $\circled{1}$ holds first-order optimality condition of $\Ftt $ with minimizer $\tilde{x}_t^{*}$, and $\circled{2}$ holds by \cref{fact:cosine_law_riemannian} noting that $\delta_{\! \DOL}=1$ as $\M$ is a Hadamard manifold.
  Using the previous inequalities, we obtain
  \begin{equation}
    \label{eq:ioomd-bound}
    \begin{aligned}
    &\ell_t(\tilde{x}_t)-\ell_t(u_t)= \ell_t(\tilde{x}_t)-\ell_t(x_{t+1})+\ell_t(x_{t+1})-\ell_t(u_t)\\
                         &\circled{1}[\le]\innp{\nabla\ell_t(\tilde{x}_t),-\exponinv{\tilde{x}_t}(x_{t+1})}+ \ell_t(x_{t+1})-\ell_t(u_t)+\innp{\nabla\tilde{\ell}_t(\tilde{x}_t^{*}),\exponinv{\tilde{x}^{*}_{t}}(x_{t+1})} \\
                           &\quad + \frac{1}{2\eta} \left( \dist(x_t,x_{t+1})^2-\dist(\tilde{x}^{*}_{t},x_{t+1})^2-\dist(\tilde{x}^{*}_t,x_{t})^2  \right)\\
                         &\circled{2}[\le] \innp{\nabla\ell_t(\tilde{x}_t),-\exponinv{\tilde{x}_t}(x_{t+1})}+\innp{\nabla\tilde{\ell}_t(\tilde{x}_t^{*}),\exponinv{\tilde{x}_t^{*}}(x_{t+1})} +\frac{1}{2\eta} \left( \dist(x_t,u_t)^2-(1+\mu\eta)\dist(x^{*}_{t+1},u_t)^2  \right)\\
                         &\quad  + \frac{1}{2\eta} \left( -\dist(\tilde{x}^{*}_{t},x_{t+1})^2-\dist(\tilde{x}^{*}_t,x_{t})^2+\bar{\epsilon}_t\dist(x_t,x_{t+1}^{*})^2 \right) \\
    \end{aligned}
  \end{equation}
  where $\circled{1}$ holds by \cref{eq:hint-bound} and the g-convexity of
    $\ell_t$ between $\tilde{x}_t$ and $x_{t+1}$and $\circled{2}$ holds by \cref{eq:omd-bound}. We want to obtain a computable \emph{optimism} term, i.e, a term that depends on the difference between the gradients of the loss function $\ell_t$ and the hint function $\tilde{\ell}_t$ evaluated at a point we know or can compute. Since computing the exact optimizer $\tilde{x}_t^{*}$ of $\Ftt$ is in general not possible, we bound $\innp{\nabla\tilde{\ell}_t(\tilde{x}_t^{*}),\exponinv{\tilde{x}_t^{*}}(x_{t+1})}$ by a term dependent on the gradient evaluated at the inexact point $\tilde{x}_t$, i.e., $\nabla\tilde{\ell}_t(\tilde{x}_t)$.
  Adding and subtracting terms, we get
  \begin{equation}
\label{eq:aux:error_terms_on_optimal_point}
    \begin{aligned}
      & \innp{\nabla\tilde{\ell}_t(\tilde{x}_t^{*}),\exponinv{\tilde{x}_{t}^{*}}(x_{t+1})} = \innp{\nabla\tilde{\ell}_t(\tilde{x}_t^{*})-\Gamma{\tilde{x}_t}{\tilde{x}_t^{*}}\nabla\tilde{\ell}_t(\tilde{x}_t),\exponinv{\tilde{x}^{*}_{t}}(x_{t+1})}+\innp{\Gamma{\tilde{x}_t}{\tilde{x}_t^{*}}\nabla\tilde{\ell}_t(\tilde{x}_t),\exponinv{\tilde{x}^{*}_{t}}(x_{t+1})}\\
      & \circled{1}[=] \innp{\nabla\tilde{\ell}_t(\tilde{x}_t^{*})-\Gamma{\tilde{x}_t}{\tilde{x}_t^{*}}\nabla\tilde{\ell}_t(\tilde{x}_t),\exponinv{\tilde{x}^{*}_{t}}(x_{t+1})} +\innp{\nabla\tilde{\ell}_t(\tilde{x}_t),\exponinv{\tilde{x}_{t}}(x_{t+1})} +\innp{\Gamma{\tilde{x}_t}{\tilde{x}_t^{*}}\nabla\tilde{\ell}_t(\tilde{x}_t),\exponinv{\tilde{x}^{*}_{t}}(x_{t+1})-\Gamma{\tilde{x}_t}{\tilde{x}_t^{*}}\exponinv{\tilde{x}_{t}}(x_{t+1})},
    \end{aligned}
  \end{equation}
  where we used Gauß's Lemma in $\circled{1}$, i.e., $\innp{\Gamma{\tilde{x}_t}{\tilde{x}_t^{*}}\nabla\tilde{\ell}_t(\tilde{x}_t),\Gamma{\tilde{x}_t}{\tilde{x}_t^{*}}\exponinv{\tilde{x}_{t}}(x_{t+1})}_{\tilde{x}_t^{*}}=\innp{\nabla\tilde{\ell}_t(\tilde{x}_t),\exponinv{\tilde{x}_{t}}(x_{t+1})}_{\tilde{x}_t}$, for the second summand. Note that this is the term we wanted to obtain. We now go on to bound the other two terms.
For the first of the two error terms, we have,
\begin{equation}
  \label{eq:err-1}
    \begin{aligned}
      &\innp{\nabla\tilde{\ell}_t(\tilde{x}_t^{*})-\Gamma{\tilde{x}_t}{\tilde{x}_t^{*}}\nabla\tilde{\ell}_t(\tilde{x}_t),\exponinv{\tilde{x}^{*}_{t}}(x_{t+1})}\circled{1}[\le]  \norm{\nabla\tilde{\ell}_t(\tilde{x}_t^{*})-\Gamma{\tilde{x}_t}{\tilde{x}_t^{*}}\nabla\tilde{\ell}_t(\tilde{x}_t)}\cdot \dist(\tilde{x}^{*}_{t},x_{t+1})\circled{2}[\le] \L  \dist(\tilde{x}^{*}_t,\tilde{x}_t)\dist(\tilde{x}^{*}_{t},x_{t+1})\\
      &\circled{3}[\le]\frac{1}{2} \left(8\eta \L^2\dist(\tilde{x}^{*}_t,\tilde{x}_t)^2+\frac{1}{8\eta}\dist(\tilde{x}^{*}_{t},x_{t+1})^2\right)\circled{4}[\le]\frac{1}{2} \left(8\eta \L^2\bar{\epsilon}_t\dist(\tilde{x}^{*}_t,x_t)^2+\frac{1}{8\eta}\dist(\tilde{x}_{t}^{*},x_{t+1})^2\right),
    \end{aligned}
\end{equation}
    where $\circled{1}$ holds by the Cauchy-Schwarz inequality, $\circled{2}$ holds by the $\L $-smoothness of $\tilde{\ell}_t$, $\circled{3}$ holds by Young's inequality and $\circled{4}$ by \cref{eq:Ftt-criterion}.
    Before we bound the second error term, consider
    \begin{equation}\label{eq:zeta-bound}
      \begin{aligned}
\zeta^2_{\bar{\DOL}}&\circled{1}[\le] (1+\sqrt{\vert \kappamin\vert}(\dist(\tilde{x}_t,x_{t+1})+\dist(\tilde{x}_t^{*},x_{t+1})))^2\circled{2}[\le] 2(1+ 2\vert \kappamin\vert(d^2(\tilde{x}_t,x_{t+1})+d^2(\tilde{x}_t^{*},x_{t+1})))\\
      &\circled{3}[\le] 2(1+ 2\vert \kappamin\vert(2\dist(\tilde{x}_t,\tilde{x}^{*}_{t})^2+3\dist(\tilde{x}^{*}_t,x_{t+1})^2)),
      \end{aligned}
\end{equation}
where $\bar{\DOL}\defi \max\{\dist(\tilde{x}_t,x_{t+1}),\dist(\tilde{x}_t^{*},x_{t+1})\}$. Here, $\circled{1}$ holds by the definition of $\zeta_{\bar{\DOL}}$, $\circled{2}$ holds by applying $(a+b)^2\le 2a^2+2b^2$ for $a,b\ge 0$ twice and $\circled{3}$ also uses the last inequality and the triangle inequality.
    We now bound the second error term in \cref{eq:aux:error_terms_on_optimal_point} as follows,
    \begin{equation}
      \label{eq:err-2}
    \begin{aligned}
      &\innp{\Gamma{\tilde{x}_t}{\tilde{x}_t^{*}}\nabla\tilde{\ell}_t(\tilde{x}_t),\exponinv{\tilde{x}_{t}^{*}}(x_{t+1})-\Gamma{\tilde{x}_t}{\tilde{x}_t^{*}}\exponinv{\tilde{x}_{t}}(x_{t+1})} \circled{1}[\le]\norm{\nabla\tilde{\ell}_t(\tilde{x}_t)}\cdot\norm{\exponinv{\tilde{x}_{t}^{*}}(x_{t+1})-\Gamma{\tilde{x}_t}{\tilde{x}_t^{*}}\exponinv{\tilde{x}_{t}}(x_{t+1})}\\
        &\circled{2}[\le]\zeta_{\bar{\DOL}}\norm{\nabla\tilde{\ell}_t(\tilde{x}_t)}\dist(\tilde{x}_t,\tilde{x}^{*}_t) \circled{3}[\le] \frac{\zeta_{\bar{\DOL}}^2E\Delta_t}{4(1+48\vert \kappamin\vert E\eta)} + \frac{\norm{\nabla\tilde{\ell}_t(\tilde{x}_t)}^2\dist(\tilde{x}_t,\tilde{x}^{*}_t)^2(1+48\vert \kappamin\vert E\eta)}{E\Delta_t}\\
      &\circled{4}[\le] \frac{E\Delta_t}{2}+  \left( \frac{\norm{\nabla\tilde{\ell}_t(\tilde{x}_t)}^2(1+48\vert \kappamin\vert E\eta)}{E\Delta_t}+ \frac{\Delta_t}{24\eta} \right)\dist(\tilde{x}_t,\tilde{x}^{*}_t)^2 + \frac{\Delta_t}{16\eta}\dist(\tilde{x}^{*}_t,x_{t+1})^{2}\\
      &\circled{5}[\le] \frac{E\Delta_t}{2}+  \left( \frac{\norm{\nabla\tilde{\ell}_t(\tilde{x}_t)}^2(E^{-1}+48\vert \kappamin\vert \eta)}{\Delta_t}+ \frac{\Delta_t}{24\eta} \right)\bar{\epsilon}_t \dist(x_t,\tilde{x}^{*}_t)^2 + \frac{\Delta_t}{16\eta}\dist(\tilde{x}^{*}_t,x_{t+1})^{2},
    \end{aligned}
    \end{equation}
    where $\circled{1}$ holds by the Cauchy-Schwarz inequality, $\circled{2}$ holds by \cref{prop:dist-smoothness} with $\bar{\DOL}=\max\{\dist(\tilde{x}_t,x_{t+1}),\dist(\tilde{x}^{*}_t,x_{t+1})\}$, $\circled{3}$ holds by Young's inequality for any  $E>0$ and $\Delta_t\in (0,1)$, $\circled{4}$ holds by \cref{eq:zeta-bound}, $\circled{5}$ holds by \cref{eq:Ftt-criterion}.
    Using \cref{eq:err-1,eq:err-2} to bound \cref{eq:ioomd-bound} yields,
    \begin{equation}\label{eq:ioomd-bound-inexact}
    \begin{aligned}
    &\ell_t(\tilde{x}_t)-\ell_t(u_t)\le \innp{\nabla\ell_t(\tilde{x}_t)-\nabla\tilde{\ell}_t(\tilde{x}_t),-\exponinv{\tilde{x}_t}(x_{t+1})} +\frac{1}{2\eta} \left( \dist(x_t,u_t)^2-(1+\mu\eta)\dist(x^{*}_{t+1},u_t)^2  \right) +\frac{E\Delta_t}{2}\\
                         &\quad  + \frac{1}{2\eta} \left( \frac{\Delta_t-7}{8}\dist(\tilde{x}^{*}_{t},x_{t+1})^2+\bar{\epsilon}_t\dist(x_t,x_{t+1}^{*})^2 +\left(\bar{\epsilon}_t(2\eta\Delta_t^{-1}\norm{\nabla\tilde{\ell}_t(\tilde{x}_t)}^2(E^{-1}+48\vert \kappamin\vert \eta)+8\eta^2L^2+\Delta_t/12)-1\right)\dist(\tilde{x}_t^{*},x_{t})^2)\right) \\
    &\circled{1}[\le] \innp{\nabla\ell_t(\tilde{x}_t)-\nabla\tilde{\ell}_t(\tilde{x}_t),-\exponinv{\tilde{x}_t}(x_{t+1})} +\frac{1}{2\eta} \left( \dist(x_t,u_t)^2+(\Delta_t-1)\dist(x_{t+1},u_t)^2  \right)  - \frac{\mu}{2}\dist(x^{*}_{t+1},u_t)^2+\frac{E\Delta_t}{2}\\
                         &\quad  + \frac{1}{2\eta} \left( -\frac{3}{4}\dist(\tilde{x}^{*}_{t},x_{t+1})^2+\bar{\epsilon}_t(1+\Delta_t^{-1})\dist(x_t,x_{t+1}^{*})^2 +\left(\bar{\epsilon}_t(2\eta\Delta_t^{-1}\norm{\nabla\tilde{\ell}_t(\tilde{x}_t)}^2(E^{-1}+48\vert \kappamin\vert \eta)+8\eta^2L^2+1)-1\right)\dist(\tilde{x}_t^{*},x_{t})^2)\right)
    \end{aligned}
    \end{equation}
    where $\circled{1}$ holds by noting that $\Delta_t\in (0,1)$  and applying the following bound to $-\frac{1}{2\eta}\dist(x^{*}_{t+1},u_t)^2$,
  \begin{equation}\label{eq:error-trick}
    \begin{aligned}
      -\dist(x^{*}_{t+1},u_t)^2&=-\dist(x_{t+1},u_t)^2+\dist(x_{t+1},u_t)^2-\dist(x^{*}_{t+1},u_t)^2\\
                           &\circled{2}[\le] -\dist(x_{t+1},u_t)^2 + 2\innp{\exponinv{x_{t+1}}(u_t),\exponinv{x_{t+1}}(x_{t+1}^{*})}\\
      &\circled{3}[\le] -\dist(x_{t+1},u_t)^2 + 2\dist(x_{t+1},u_t)\dist(x_{t+1},x_{t+1}^{*})\\
      &\circled{4}[\le] -\dist(x_{t+1},u_t)^2 +  \Delta_t\dist(x_{t+1},u_t)^2+\Delta_t^{-1} \dist(x_{t+1},x_{t+1}^{*})^2\\
      &\circled{5}[\le] -\dist(x_{t+1},u_t)^2 + \Delta_t\dist(x_{t+1},u_t)^2+\bar{\epsilon}_t\Delta_t^{-1}\dist(x_t,x^{*}_{t+1})^2.
    \end{aligned}
  \end{equation}
  Here $\circled{2}$ holds by applying \cref{fact:cosine_law_riemannian} and dropping the  negative term $-\dist(x_{t+1},x_{t+1}^{*})^2$, $\circled{3}$ holds by the Cauchy-Schwarz inequality, $\circled{4}$ holds by applying Young's inequality and $\circled{5}$ holds by \cref{eq:Ft-criterion}.
  We now use the following bound for the strong g-convexity term $-\frac{\mu}{2}\dist(x^{*}_{t+1},u_t)^2$ in \cref{eq:ioomd-bound-inexact},
  \begin{equation}
   \label{eq:sc-term-bound}
     -\dist(x_{t+1}^{*},u_t)^2\circled{1}[\le] -\frac{1}{2}\dist(x_{t+1},u_t)^2+\dist(x_{t+1},x_{t+1}^{*})^2 \circled{2}[\le] -\frac{1}{2}\dist(x_{t+1},u_t)^2+\bar{\epsilon}_t\dist(x_{t},x_{t+1}^{*})^2,
 \end{equation}
 where $\circled{1}$ holds by the triangle inequality and $\circled{2}$ holds by \cref{eq:Ft-criterion}.
 Using \cref{eq:sc-term-bound} to bound \cref{eq:ioomd-bound-inexact}, we obtain
 \begin{equation}
   \label{eq:19}
   \begin{aligned}
    \ell_t(\tilde{x}_t)-\ell_t(u_t) &\le \innp{\nabla\ell_t(\tilde{x}_t)-\nabla\tilde{\ell}_t(\tilde{x}_t),-\exponinv{\tilde{x}_t}(x_{t+1})} +\frac{1}{2\eta} \left( \dist(x_t,u_t)^2+(\Delta_t-1-\frac{\mu\eta}{2})\dist(x_{t+1},u_t)^2  \right)+\frac{E\Delta_t}{2} \\
                                    &\quad + \frac{1}{2\eta} \left( -\frac{3}{4}\dist(\tilde{x}^{*}_{t},x_{t+1})^2+\bar{\epsilon}_t(2+\Delta_t^{-1})\dist(x_t,x_{t+1}^{*})^2 \right)\\
                                    &\quad +\frac{1}{2\eta}\left(\bar{\epsilon}_t(2\eta\Delta_t^{-1}\norm{\nabla\tilde{\ell}_t(\tilde{x}_t)}^2(E^{-1}+48\vert \kappamin\vert \eta)+8\eta^2L^2+1)-1\right)\dist(\tilde{x}_t^{*},x_{t})^2.
   \end{aligned}
 \end{equation}
 Let us briefly comment on why we bounded the two terms depending on $\dist(x^{*}_{t+1},u_t)^2$ differently. If we had bounded them both using \cref{eq:error-trick}, we would have obtained $(\Delta_t-1)(1+\mu\eta)\dist(x_{t+1},u_t)^2$ and since $\Delta_t>0$, this means that the update rule of \cref{alg:riod} is not a contraction in the min-max setting where all $u_t$ correspond to the saddle point $(x^{*},y^{*})$, see \cref{thm:rioda}. On the other hand, if we had bounded both terms using \cref{eq:sc-term-bound}, we would have obtained $\frac{-(1+\mu\eta)}{4\eta}\dist(x_{t+1},u_t)^2$, which would have meant that this term does not telescope out with $\frac{1}{2\eta}\dist(x_{t},u_t)^2$ later in the proof.
 We now bound our error dependent on $\dist(x_t,x^{*}_{t+1})^2$ such that we can cancel it out with the negative terms which come up in the analysis. 
 We bound,
 \begin{equation}
   \begin{aligned}
      \dist(x_t,x_{t+1}^{*})^2 &\circled{1}[\le] 2\dist(x_t,\tilde{x}^{*}_t)^2+2\dist(\tilde{x}^{*}_t,x^{*}_{t+1})^2\\
      &\circled{2}[\le] 2\dist(x_t,\tilde{x}^{*}_t)^2+4\dist(\tilde{x}^{*}_t,x_{t+1})^2+4\dist(x_{t+1},x^{*}_{t+1})^2\\
      &\circled{3}[\le] 2\dist(x_t,\tilde{x}^{*}_t)^2+4\dist(\tilde{x}^{*}_t,x_{t+1})^2+4\bar{\epsilon}_t\dist(x_t,x_{t+1}^{*})^2,
   \end{aligned}
 \end{equation}
 where $\circled{1}$ and $\circled{2}$ follow by the triangle inequality, $\circled{3}$ follows by \cref{eq:Ft-criterion}. Hence, by rearranging and noting that $\bar{\epsilon}_t<1/4$ by definition, we obtain
 \begin{equation}
   \label{eq:yt-ytp1ast-bound}
        \dist(x_t,x_{t+1}^{*})^2 \le \frac{2}{(1-4\bar{\epsilon}_t)}\dist(x_t,\tilde{x}_t^{*})^2+\frac{4}{1-4\bar{\epsilon}_t}\dist(\tilde{x}_t^{*},x_{t+1})^2.
     \end{equation}
     Using this inequality, we get
\begin{equation}
  \label{eq:ioomd-bound-err}
  \begin{aligned}
    \ell_t(\tilde{x}_t)-\ell_t(u_t) &\le \innp{\nabla\ell_t(\tilde{x}_t)-\nabla\tilde{\ell}_t(\tilde{x}_t),-\exponinv{\tilde{x}_t}(x_{t+1})} +\frac{1}{2\eta} \left( \dist(x_t,u_t)^2+(\Delta_t^{-1}-1-\mu\eta/2)\dist(x_{t+1},u_t)^2  \right)\\
                         &\quad  + \frac{1}{2\eta}\left((C_t-3/4)  \dist(\tilde{x}^{*}_{t},x_{t+1})^2+(C_t-1)\dist(\tilde{x}_t^{*},x_{t})^2\right)+\frac{E\Delta_t}{2} ,
  \end{aligned}
\end{equation}
where
\begin{equation*}
   \begin{aligned}
  C_t \defi& \frac{\bar{\epsilon}_t}{1-4\bar{\epsilon}_t}\left(9+8\eta^2 \L^2+\Delta_t^{-1}\left(4+2\eta\norm{\nabla\tilde{\ell}_t(\tilde{x}_t)}^2(E^{-1}+48\vert \kappamin\vert \eta)\right) \right)\\
       &\ge\max\left\{\frac{4\bar{\epsilon}_t(2+\Delta_t^{-1})}{1-4\bar{\epsilon}_t},\frac{2\bar{\epsilon}_t(2+\Delta_t^{-1})}{1-4\bar{\epsilon}_{t}}+\bar{\epsilon}_t\left(\frac{2\eta\norm{\nabla\tilde{\ell}_t(\tilde{x}_t)}^2(E^{-1}+48\vert \kappamin\vert \eta)}{\Delta_t}+8L^{2}\eta^2+1\right)\right\}.
   \end{aligned}
 \end{equation*}
We now address the mismatch between $\dist(x_t,u_t)^2$ and $\dist(x_{t+1},u_t)^2$,
\begin{equation}
  \label{eq:path-var}
  \begin{aligned}
    \sum_{t=1}^T\dist(x_t,u_t)^2-\dist(x_{t+1},u_t)^2&=\sum_{t=1}^T\dist(x_t,u_t)^2-\dist(x_{t+1},u_{t+1})^2+\dist(x_{t+1},u_{t+1})^2-\dist(x_{t+1},u_{t})^2\\
&\circled{1}[=]\dist(x_1,u_1)^2-\dist(x_{T+1},u_{T+1})^2+\sum_{t=1}^T\dist(x_{t+1},u_{t+1})^2-\dist(x_{t+1},u_{t})^2\\
&\circled{2}[\le]\dist(x_1,u_1)^2-\dist(x_{T+1},u_{T+1})^2+2\DOL\underbracket{\sum_{t=1}^T\dist(u_t,u_{t+1})}_{=\PT },
\end{aligned}
\end{equation}
where $\circled{1}$ holds by telescoping the first two summands and $\circled{2}$ holds by
\begin{equation*}
\dist(x_{t+1},u_{t+1})^2-\dist(x_{t+1},u_{t})^2\circled{3}[\le]\dist(u_t,u_{t+1})(\dist(x_{t+1},u_{t+1})+\dist(x_{t+1},u_{t})) \circled{4}[\le] 2\dist(u_t,u_{t+1})\DOL,
\end{equation*}
where we apply the triangle inequality $\dist(x_{t+1},u_{t+1})\le \dist(x_{t+1},u_{t})+\dist(u_t,u_{t+1})$ in $\circled{3}$, and in $\circled{4}$, we use that $\dist(x_{t+1},u_{t+1}), \dist(x_{t+1},u_t)\le \DOL$.
Summing \cref{eq:ioomd-bound-err} from $t=1$ to $T$ and using \cref{eq:path-var}, we get
 \begin{equation}
   \label{eq:ioomd-regret}
    \begin{aligned}
      &\sum_{t=1}^{T}\ell_t(\tilde{x}_t)-\ell_t(u_t) \le\frac{\dist(x_1,u_1)^2-\dist(x_{T+1},u_{T+1})^2 + 2 \PT \DOL}{2\eta} +\sum_{t=1}^{T} \innp{\nabla\ell_t(\tilde{x}_t)-\nabla\tilde{\ell}_t(\tilde{x}_t),-\exponinv{\tilde{x}_t}(x_{t+1})}  \\
      &+ \frac{1}{2\eta}\sum_{t=1}^{T} \left[ (C_t-3/4)  \dist(\tilde{x}^{*}_{t},x_{t+1})^2+(C_t-1)\dist(\tilde{x}_t^{*},x_{t})^2-\frac{\mu\eta}{2}\dist(x_{t+1},u_t)^2\right] +\sum_{t=1}^T\frac{ \Delta_t(E\eta+\dist(x_{t+1},u_t)^2)}{2\eta}.
    \end{aligned}
  \end{equation}
Further, choosing $E\gets \DOL^2/\eta$ and $\Delta_t=(t+1)^{-2}$, we have that
\begin{equation}\label{eq:err-D}
\frac{1}{2\eta}\sum_{t=1}^{T} \Delta_t(\DOL^2+\dist(x_{t+1},u_t)^2)\circled{1}[\le] \frac{\DOL^2}{\eta}\sum_{t=1}^{T} \Delta_t\circled{2}[\le] \frac{\DOL^2}{\eta},
\end{equation}
where $\circled{1}$ holds since the $x_{t+1}$ and $u_t$ lie in $\X$ and $\circled{2}$ holds by \cref{prop:sum-bound}. We obtain
 \begin{equation}
   \label{eq:ioomd-regret-online}
    \begin{aligned}
      &\sum_{t=1}^{T}\ell_t(\tilde{x}_t)-\ell_t(u_t) \le\frac{ \dist(x_1,u_1)^2-\dist(x_{T+1},u_{T+1})^2 + 2 \PT \DOL +  2D^2}{2\eta}  +\sum_{t=1}^{T} \innp{\nabla\ell_t(\tilde{x}_t)-\nabla\tilde{\ell}_t(\tilde{x}_t),-\exponinv{\tilde{x}_t}(x_{t+1})}\\
      &+ \frac{1}{2\eta}\sum_{t=1}^{T} \left[  (C_t-3/4)  \dist(\tilde{x}^{*}_{t},x_{t+1})^2+(C_t-1)\dist(\tilde{x}_t^{*},x_{t})^2-\frac{\mu\eta}{2}\dist(x_{t+1},u_t)^2\right] .
    \end{aligned}
  \end{equation}
Further, we bound
\begin{align*}
  \innp{\nabla\ell_t(\tilde{x}_t)-\nabla\tilde{\ell}_t(\tilde{x}_t),-\exponinv{\tilde{x}_t}(x_{t+1})}&\circled{1}[\le] \norm{\nabla\ell_t(\tilde{x}_t)-\nabla\tilde{\ell}_t(\tilde{x}_t)}\cdot \dist(\tilde{x}_t,x_{t+1}) \circled{2}[\le]\eta\norm{\nabla\ell_t(\tilde{x}_t)-\nabla\tilde{\ell}_t(\tilde{x}_t)}^2+\frac{1}{4\eta}\dist(\tilde{x}_t,x_{t+1})^2\\
  & \circled{3}[\le]  \eta\norm{\nabla\ell_t(\tilde{x}_t)-\nabla\tilde{\ell}_t(\tilde{x}_t)}^2+\frac{1}{2\eta}(\bar{\epsilon}_t\dist(\tilde{x}_t,\tilde{x}^{*}_{t})^2+\dist(\tilde{x}^{*}_t,x_{t+1})^2),
\end{align*}
where we use Cauchy-Schwarz in $\circled{1}$, Young's inequality in $\circled{2}$ and the triangle inequality and \cref{eq:Ftt-criterion} in $\circled{3}$.
Bounding $\dist(x_1,u_1)^2\le \DOL^2$, we obtain
\begin{align*}
  \sum_{t=1}^{T}\ell_t(\tilde{x}_t)-\ell_t(u_t) &\le \frac{ 2\PT \DOL +3D^2}{2\eta} + \sum_{t=1}^T\left(\eta\norm{\nabla\ell_t(\tilde{x}_t)-\nabla\tilde{\ell}_t(\tilde{x}_t)}^2-\frac{\mu}{4}\dist(x_{t+1},u_t)^2\right)\\
                                                &\quad +\frac{1}{2\eta}\sum_{t=1}^{T}\left(  (C_t+\bar{\epsilon}_t-1)  \dist(x_t,\tilde{x}_t)^2 + (C_t-1/4)\dist(\tilde{x}^{*}_{t},x_{t+1})^2 \right)\\
  &\circled{1}[\le] \frac{  2\PT \DOL +3D^2}{2\eta} + \sum_{t=1}^T\left(\eta\norm{\nabla\ell_t(\tilde{x}_t)-\nabla\tilde{\ell}_t(\tilde{x}_t)}^2-\frac{\mu}{4}\dist(x_{t+1},u_t)^2\right).
\end{align*}
Here $\circled{1}$ holds because our choice of $\varepsilon_t$ ensures that $C_t+\bar{\epsilon}_t\le \frac{1}{4}$.
\end{proof}

\section{RIODA proof}
\label{sec:rioda-proof}

{
\begin{algorithm}[ht!]
    \caption{Riemannian Implicit Optimistic Gradient Descent-Ascent ({\sc RIODA})}
    \label{alg:rioda}
\begin{algorithmic}[1]
    \REQUIRE Sets $\X \subseteq \M$, $\Y\subseteq \N$, sectional curvature $\kappamin$, initial points $(x_1,y_1) \in \X\times \Y$, smoothness and strong g-convexity constants $\L $ and $\mu$ of $f$ and final precision $\epsilon$ or total number of iterations $T$.
    \vspace{0.1cm}
    \hrule
    \vspace{0.4cm}
    \hspace{-1.3cm}\textbf{Definitions:} \Comment{\emph{The algorithm does not compute these quantities.}}
    \begin{itemize}[leftmargin=*]
      \item Proximal parameter $\eta \gets \frac{1}{4L}$

      \item $H_t(x,y)\defi f(x,y) +\frac{1}{2\eta}\dist(x,x_t)^2-\frac{1}{2\eta}\dist(y,y_t)^2$
      \item Exact solutions:
         \item[]  ${\displaystyle\tilde{x}^\ast_{t}\defi \argmin_{x\in \X}}\;H_t(x,y_t) $ and   ${\displaystyle x^\ast_{t+1}\defi \argmin_{x\in \X} }\; H_t(x,\tilde{y}_t)$
         \item[]  ${\displaystyle\tilde{y}^\ast_{t}\defi \argmax_{y\in \Y}}\; H_t(x_{t},y) $ and  ${\displaystyle x^\ast_{t+1}\defi \argmin_{x\in \X} }\; H_t(\tilde{x}_t,y) $

      \item Constrained case:  \Comment{Knowledge of $\Lips $ is not required, see \cref{cor:implement-rioda}}
            \item[] $\mu=0$: $\varepsilon_t= \L \min\left\{1/8, \left((t+1)^2(40+\Lips^2/\L (\epsilon/6+12 \vert \kappamin\vert/\L  ))  \right)^{-1} \right\}$
            \item[] $\mu>0$: $\varepsilon_t=\L \min\left\{1/8, \left(\max\{(t+1)^2,16L/\mu\}(40+\Lips^2/\L (\epsilon/4+12 \vert \kappamin\vert/\L  ))  \right)^{-1} \right\}$
      \item Unconstrained case: \Comment{Knowledge of $\R$ is not required, see \cref{cor:implement-rioda-unconstrained}}
            \item[] $\mu=0$: $\varepsilon_t= \frac{\L }{8}\min\left\{1, \left(2(t+1)^2(37+2385 \R^2\vert \kappamin\vert )  \right)^{-1} \right\}$
            \item[] $\mu>0$: $\varepsilon_t=\frac{\L }{8}\min\left\{1, \mu\left(8L(37+2385\R^2\vert \kappamin\vert )  \right)^{-1} \right\}$
    \end{itemize}
    \hrule
    \vspace{0.1cm}
    \FOR {$t = 1 \text{ \textbf{to} } T$}
    \State \label{line:xy-tilde-criterion} ${\displaystyle\tilde{x}_{t}\gets (\varepsilon_t\dist(x_t,\tilde{x}_{t}^{*})^2)\text{-minimizer of } H_t(x,y_t)}, \quad {\displaystyle\tilde{y}_{t}\gets( \varepsilon_t\dist(y_t,\tilde{y}_{t}^{*})^2)\text{-maximizer of }  H_t(x_t,y)}$\vspace{0.5em}
    \State \label{line:xy-criterion} ${\displaystyle x_{t+1}{\gets} (\varepsilon_t\dist(x_t,x_{t+1}^{*})^2)\text{-minimizer of } H_t(x,\tilde{y}_t)}, \quad {\displaystyle y_{t+1}{\gets} (\varepsilon_t\dist(y_t,y_{t+1}^{*})^2)\text{-maximizer of } H_t(\tilde{x}_t,y)}$
    \ENDFOR
    \vspace{0.1cm}
    \hrule
    \vspace{0.1cm}
    \ENSURE $\tilde{x}_T,\tilde{y}_T$ {\bf if} $\mu>0$, {\bf else} uniform geodesic average of $(\tilde{x}_1,\tilde{y}_1), \ldots, (\tilde{x}_T,\tilde{y}_T)$ defined by \cref{eq:g-avg}.
\end{algorithmic}
\end{algorithm}
}

\begin{lemma}[Online-to-Minmax Reduction]\label{lem:reduction}
  Let $\M$, $\N$ be Hadamard manifolds with sectional curvature in $[\kappamin, 0]$ and $\X\subset \M$, $\Y\subset \N$ be compact and g-convex sets. Consider the bi-function $f:\M\times\N \to \mathbb{R}$, which is $\mu$-SCSC and $\L $-smooth in $\X\times\Y$. Further, let $(x^{*},y^{*})$ be a saddle point of $f$. Consider two instantiations of \cref{alg:riod} running for the $x$ and $y$ variables in parallel. In particular, for $x$, let $\tilde{x}_{t}\gets \tilde{x}_{t}$, $x_t\gets x_t$, $\ell_t(x)\gets f(x,\tilde{y}_t)$, $\tilde{\ell}_t(x)\gets f(x,y_t)$, $\varepsilon_t\gets \varepsilon_t^x$, $E\gets E_y$ and $u_t\gets x^{*}$.
  For  $y$, let $x_t\gets y_t$, $\tilde{x}_{t}\gets \tilde{y}_t$, $\ell_t(x)\gets -f(\tilde{x}_t,y)$, $\tilde{\ell}_t(x)\gets -f(x_t,y)$, $\varepsilon_t\gets \varepsilon_t^y$, $E\gets E_y$ and $u_t\gets y^{*}$.
  Then we have that
  \begin{equation*}
    \begin{aligned}
    &  f(\tilde{x}_t,y^{*})-f(x^{*},\tilde{y}_t) \le  \frac{1}{2\eta}\left(  \dist(y_t,y^{*})^2+\dist(x_t,x^{*})^2+(\Delta_t-1-\mu\eta/2)(\dist(y_{t+1},y^{*})^2+\dist(x_{t+1},x^{*})^2) \right)\\
    &\quad +\left(\L +\frac{(C_t^x-3/4)}{2\eta} \right)\dist(\tilde{x}^{*}_{t},x_{t+1})^2+\left(\L +\frac{(C_t^y-3/4)}{2\eta} \right)\dist(\tilde{y}^{*}_{t},y_{t+1})^2+ (E_x+E_y)\Delta_t/2\\
      &\quad+\left(\L (1+2\bar{\epsilon}_t)+\frac{(C_t^y+4\bar{\epsilon}_t-1)}{2\eta} \right)\dist(\tilde{y}^{*}_t,y_{t})^2+\left(\L (1+2\bar{\epsilon}_t)+\frac{(C_t^{x}+4\bar{\epsilon}_t-1)}{2\eta} \right)\dist(\tilde{x}^{*}_t,x_{t})^2,
    \end{aligned}
  \end{equation*}
  where $\bar{\epsilon}_t\defi 2\eta \max\{\varepsilon_t^x,\varepsilon_t^y\}$ and
  \begin{equation}
    \label{eq:Ct-rioda}
\begin{aligned}
  C_t^x&\defi \frac{\bar{\epsilon}_t}{1-4\bar{\epsilon}_t}\left(9+8\eta^2 \L^2+\Delta_t^{-1}\left(4+2\eta\norm{\nabla_xf(\tilde{x}_t,y_t)}^2(E_x^{-1}+48\vert \kappamin\vert \eta)\right) \right)\\
  C_t^y&\defi \frac{\bar{\epsilon}_t}{1-4\bar{\epsilon}_t}\left(9+8\eta^2 \L^2+\Delta_t^{-1}\left(4+2\eta\norm{\nabla_yf(x_t,\tilde{y}_t)}^2(E_y^{-1}+48\vert \kappamin\vert \eta)\right) \right).
\end{aligned}
  \end{equation}
\end{lemma}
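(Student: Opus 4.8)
The plan is to apply, at round $t$, the per-round estimate obtained inside the proof of \cref{thm:riod} just before summing over $t$ (i.e.\ \cref{eq:ioomd-bound-err}) to two parallel instantiations of \cref{alg:riod}: the $x$-instance with $\ell_t(\cdot)=f(\cdot,\tilde y_t)$, $\tilde\ell_t(\cdot)=f(\cdot,y_t)$, $u_t=x^{*}$, $\varepsilon_t=\varepsilon_t^x$, $E=E_x$; and the $y$-instance with $\ell_t(\cdot)=-f(\tilde x_t,\cdot)$, $\tilde\ell_t(\cdot)=-f(x_t,\cdot)$, $u_t=y^{*}$, $\varepsilon_t=\varepsilon_t^y$, $E=E_y$, where the generic RIOD iterates $x_t,\tilde x_t$ are relabelled $y_t,\tilde y_t$. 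The hypotheses of \cref{thm:riod} are met in each case: g-convexity, g-concavity and $\L$-smoothness of $f$ make $f(\cdot,\tilde y_t),f(\cdot,y_t)$ g-convex and $\L$-smooth in $\X$ and $-f(\tilde x_t,\cdot),-f(x_t,\cdot)$ g-convex and $\L$-smooth in $\Y$. Since the right-hand side of \cref{eq:ioomd-bound-err} is monotone non-decreasing in the per-step error (which enters only through $C_t$ via the increasing map $\bar\epsilon\mapsto\bar\epsilon/(1-4\bar\epsilon)$), both instances remain valid after replacing $2\eta\varepsilon_t^x$ and $2\eta\varepsilon_t^y$ by the common $\bar\epsilon_t=2\eta\max\{\varepsilon_t^x,\varepsilon_t^y\}$; this substitution produces exactly the round-$t$ constants $C_t^x$ and $C_t^y$ of \cref{eq:Ct-rioda}, once one notes $\nabla\tilde\ell_t(\tilde x_t)=\nabla_x f(\tilde x_t,y_t)$ in the $x$-instance and $\nabla\tilde\ell_t(\tilde y_t)=-\nabla_y f(x_t,\tilde y_t)$ in the $y$-instance.

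Adding the two estimates, the left-hand sides combine as $\bigl[f(\tilde x_t,\tilde y_t)-f(x^{*},\tilde y_t)\bigr]+\bigl[f(\tilde x_t,y^{*})-f(\tilde x_t,\tilde y_t)\bigr]=f(\tilde x_t,y^{*})-f(x^{*},\tilde y_t)$, the target. On the right, the two $\tfrac1{2\eta}\bigl(\dist(x_t,u_t)^2+(\Delta_t-1-\mu\eta/2)\dist(x_{t+1},u_t)^2\bigr)$ blocks (with $u_t=x^{*}$ and $u_t=y^{*}$) merge into the first line of the claimed bound, the $\tfrac{E\Delta_t}{2}$ terms into $(E_x+E_y)\Delta_t/2$, and the $\tfrac1{2\eta}\bigl((C_t-3/4)\dist(\tilde x_t^{*},x_{t+1})^2+(C_t-1)\dist(\tilde x_t^{*},x_t)^2\bigr)$ terms give the $\tfrac{C_t^x-3/4}{2\eta}$, $\tfrac{C_t^y-3/4}{2\eta}$ parts and (parts of) the $\tfrac{C_t^x-1}{2\eta}$, $\tfrac{C_t^y-1}{2\eta}$ parts of the coefficients in the statement.

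The new ingredient is bounding the two ``optimism'' inner products that \cref{eq:ioomd-bound-err} leaves uncontrolled; this is where the extra $\L(1+2\bar\epsilon_t)$ and $\tfrac{4\bar\epsilon_t}{2\eta}$ pieces appear. For the $x$-instance the inner product is $\innp{\nabla_x f(\tilde x_t,\tilde y_t)-\nabla_x f(\tilde x_t,y_t),-\exponinv{\tilde x_t}(x_{t+1})}$; both gradients lie in $T_{\tilde x_t}\M$, so Cauchy--Schwarz and $\L$-Lipschitzness of $\nabla_x f(\tilde x_t,\cdot)$ (one of the four conditions in ``$\L$-smooth'') bound it by $\L\,\dist(\tilde y_t,y_t)\dist(\tilde x_t,x_{t+1})$, and Young's inequality splits this into $\tfrac{\L}{2}\dist(\tilde y_t,y_t)^2+\tfrac{\L}{2}\dist(\tilde x_t,x_{t+1})^2$. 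The triangle inequality together with the inexactness criteria — the analogues of \cref{eq:Ftt-criterion}, $\dist(\tilde x_t,\tilde x_t^{*})^2\le\bar\epsilon_t\dist(x_t,\tilde x_t^{*})^2$ and $\dist(\tilde y_t,\tilde y_t^{*})^2\le\bar\epsilon_t\dist(y_t,\tilde y_t^{*})^2$, plus \cref{eq:Ft-criterion} for a residual $\dist(x_t,x_{t+1}^{*})^2$ that survives — rewrites everything as nonnegative multiples of $\dist(\tilde x_t^{*},x_{t+1})^2$, $\dist(\tilde x_t^{*},x_t)^2$ and $\dist(\tilde y_t^{*},y_t)^2$; the $y$-instance is symmetric, using $\L$-Lipschitzness of $\nabla_y f(\cdot,\tilde y_t)$ and producing $\dist(\tilde y_t^{*},y_{t+1})^2$, $\dist(\tilde y_t^{*},y_t)^2$ and $\dist(\tilde x_t^{*},x_t)^2$ terms. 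Collecting the contributions that land on each of the four starred squared distances then yields the coefficients $\L+\tfrac{C_t^x-3/4}{2\eta}$, $\L+\tfrac{C_t^y-3/4}{2\eta}$, $\L(1+2\bar\epsilon_t)+\tfrac{C_t^x+4\bar\epsilon_t-1}{2\eta}$ and $\L(1+2\bar\epsilon_t)+\tfrac{C_t^y+4\bar\epsilon_t-1}{2\eta}$ of the statement.

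The conceptual content — ``run RIOD once per variable and add'' — is short, so the real work is the bookkeeping. Two things need care: that the two cross-partial Lipschitz bounds actually invoked ($\nabla_x f(x,\cdot)$ and $\nabla_y f(\cdot,y)$ Lipschitz) are among the conditions packaged into ``$\L$-smooth'' and that the differenced gradients share a tangent space, so no parallel transport enters; and the precise constant chase through the Young and triangle splits, which is what pins down the $\L(1+2\bar\epsilon_t)$, $4\bar\epsilon_t$ and $3/4$ factors and must be done carefully, since perturbing them would break the sign conditions on the coefficients that \cref{thm:rioda} relies on when telescoping. I expect this constant-tracking, not any single inequality, to be the main obstacle.
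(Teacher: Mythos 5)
Your proposal follows essentially the same route as the paper: instantiate the per-round bound \cref{eq:ioomd-bound-err} for the $x$- and $y$-copies of \cref{alg:riod}, add them so the left-hand sides telescope to $f(\tilde x_t,y^{*})-f(x^{*},\tilde y_t)$, and control the two optimism inner products via Cauchy--Schwarz, the cross-Lipschitzness of $\nabla_x f(\tilde x_t,\cdot)$ and $\nabla_y f(\cdot,\tilde y_t)$ (no parallel transport needed), Young's inequality, and the triangle inequality combined with the inexactness criteria to land on the starred squared distances. The only (harmless) imprecision is the mention of a surviving $\dist(x_t,x_{t+1}^{*})^2$ residual handled by \cref{eq:Ft-criterion} — at the stage of \cref{eq:ioomd-bound-err} those terms are already absorbed into $C_t$, and only the analogue of \cref{eq:Ftt-criterion} is needed here.
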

\begin{proof}
  Following the proof of \cref{thm:riod} until \cref{eq:ioomd-bound-err} for both the instantiations of \cref{alg:riod} updating $x$ and $y$ and summing the bounds, we obtain,
  \begin{equation}
    \label{eq:regret-xy-o2b}
  \begin{aligned}
    &  f(\tilde{x}_t,y^{*})-f(x^{*},y^{*})+f(x^{*},y^{*})-f(x^{*},\tilde{y}_t) \\
    &  \le \frac{1}{2\eta}\left(\dist(y_t,y^{*})^2+\dist(x_t,x^{*})^2+(\Delta_t-1-\mu\eta/2)(\dist(y_{t+1},y^{*})^2+\dist(x_{t+1},x^{*})^2)  \right)\\
    &+ \innp{\nabla_{y} f(x_{t},\tilde{y}_t)-\nabla_y f(\tilde{x}_t,\tilde{y}_{t}),-\exponinv{\tilde{y}_t}(y_{t+1})} +\innp{\nabla_x f(\tilde{x}_t,\tilde{y}_t)-\nabla_{x} f(\tilde{x}_t,y_{t}),-\exponinv{\tilde{x}_t}(x_{t+1})}\\
    &+\frac{(C^x_t-3/4)}{2\eta}  \dist(\tilde{x}^{*}_{t},x_{t+1})^2+\frac{(C^y_t-3/4)}{2\eta} \dist(\tilde{y}^{*}_{t},y_{t+1})^2 +\frac{(C_t^x-1)}{2\eta}\dist(\tilde{x}^{*}_t,x_{t})^2 +\frac{(C^y_t-1)}{2\eta}\dist(\tilde{y}^{*}_t,y_{t})^2  + (E_x+E_y)\Delta_t/2.
  \end{aligned}
\end{equation}
We have that
  \begin{align*}
    &\innp{\nabla_{y} f(x_{t},\tilde{y}_t)-\nabla_y f(\tilde{x}_t,\tilde{y}_{t}),-\exponinv{\tilde{y}_t}(y_{t+1})} +\innp{\nabla_x f(\tilde{x}_t,\tilde{y}_t)-\nabla_{x} f(\tilde{x}_t,y_{t}),-\exponinv{\tilde{x}_t}(x_{t+1})}\\
    &\circled{1}[\le] \norm{\nabla_{y} f(x_{t},\tilde{y}_t)-\nabla_y f(\tilde{x}_t,\tilde{y}_{t})}\cdot \dist(\tilde{y}_t,y_{t+1}) +\norm{\nabla_x f(\tilde{x}_t,\tilde{y}_t)-\nabla_{x} f(\tilde{x}_t,y_{t})}\cdot \dist(\tilde{x}_t,x_{t+1})\\
    &\circled{2}[\le] \L  \dist(\tilde{x}_t,x_{t})\dist(\tilde{y}_t,y_{t+1}) +\L  \dist(\tilde{y}_t,y_t)\dist(\tilde{x}_t,x_{t+1})\\
    &\circled{3}[\le] \frac{\L  }{2}\left(\dist(\tilde{x}_t,x_{t})^2+\dist(\tilde{y}_t,y_{t+1})^2+ \dist(\tilde{y}_t,y_{t})^2+\dist(\tilde{x}_t,x_{t+1})^2   \right)\\
    &\circled{4}[\le] \L (1+2\bar{\epsilon}_t)(\dist(x_t,\tilde{x}^{*}_{t})^2+\dist(y_t,\tilde{y}^{*}_{t})^2) + \L (\dist(\tilde{y}^{*}_t,y_{t+1})^2+ \dist(\tilde{x}^{*}_t,x_{t+1})^2   ).
  \end{align*}
  Here we use the Cauchy-Schwarz inequality in $\circled{1}$, $\circled{2}$ follows by applying the $\L $-smoothness of $f$, Young's inequality in $\circled{3}$ and $\circled{4}$ by the triangle inequality and the error criteria of $\tilde{x}_t$ and $\tilde{y}_t$, i.e.,
  \begin{equation*}
  \dist(\tilde{x}_t,x_{t+1})^{2}+\dist(\tilde{x}_t,x_t)^2\le 2\dist(\tilde{x}_t,\tilde{x}_t^{*})+2\dist(\tilde{x}_t^{*},x_t)^2+2\dist(\tilde{x}_t,\tilde{x}_t^{*})+2\dist(\tilde{x}_t^{*},x_{t+1})^2\le 4\bar{\epsilon}_t(x_t,\tilde{x}_t^{*})+2\dist(\tilde{x}_t^{*},x_t)^2+2\dist(\tilde{x}_t^{*},x_{t+1})^2,
\end{equation*}
and analogously for $y$.
  It follows that
  \begin{equation*}
    \begin{aligned}
    &  f(\tilde{x}_t,y^{*})-f(x^{*},\tilde{y}_t) \le  \frac{1}{2\eta}\left(  \dist(y_t,y^{*})^2+\dist(x_t,x^{*})^2+(\Delta_t-1-\mu\eta/2)(\dist(y_{t+1},y^{*})^2+\dist(x_{t+1},x^{*})^2) \right)\\
    &\quad +\left(\L +\frac{(C_t^x-3/4)}{2\eta} \right)\dist(\tilde{x}^{*}_{t},x_{t+1})^2+\left(\L +\frac{(C_t^y-3/4)}{2\eta} \right)\dist(\tilde{y}^{*}_{t},y_{t+1})^2+ (E_x+E_y)\Delta_t/2\\
      &\quad+\left(\L (1+2\bar{\epsilon}_t)+\frac{(C_t^y+4\bar{\epsilon}_t-1)}{2\eta} \right)\dist(\tilde{y}^{*}_t,y_{t})^2+\left(\L (1+2\bar{\epsilon}_t)+\frac{(C_t^{x}+4\bar{\epsilon}_t-1)}{2\eta} \right)\dist(\tilde{x}^{*}_t,x_{t})^2,
    \end{aligned}
  \end{equation*}
  which concludes the proof.
\end{proof}
\rioda*
\begin{proof}\linkofproof{thm:rioda}
  Let $\bar{\epsilon}_t\defi 2\eta \max\{\varepsilon_t^x,\varepsilon_t^y\}$. Hence by \cref{eq:Ft-criterion,eq:Ftt-criterion}, we have that
  \begin{equation}
    \label{eq:rioda-criteria}
    \dist(x_{t+1},x_{t+1}^*)^2\le \bar{\epsilon}_t\dist(x_{t},x_{t+1}^*)^2, \dist(\tilde{x}_t,\tilde{x}_t^{*})^2\le\bar{\epsilon}_t\dist(x_t,\tilde{x}_t^{*})^2,  \dist(y_{t+1},y_{t+1}^*)^2\le \bar{\epsilon}_t\dist(y_{t},y_{t+1}^*)^2, \dist(\tilde{y}_t,\tilde{y}_t^{*})^2\le\bar{\epsilon}_t\dist(y_t,\tilde{y}_t^{*})^2.
  \end{equation}
  By \cref{lem:reduction}, we have that
  \begin{equation}\label{eq:1-step-err-compact}
    \begin{aligned}
    &  f(\tilde{x}_t,y^{*})-f(x^{*},\tilde{y}_t) \le  \frac{1}{2\eta}\left(  \dist(y_t,y^{*})^2+\dist(x_t,x^{*})^2+(\Delta_t-1-\mu\eta/2)(\dist(y_{t+1},y^{*})^2+\dist(x_{t+1},x^{*})^2) \right)\\
    &\quad +\left(\L +\frac{(C_t^x-3/4)}{2\eta} \right)\dist(\tilde{x}^{*}_{t},x_{t+1})^2+\left(\L +\frac{(C_t^y-3/4)}{2\eta} \right)\dist(\tilde{y}^{*}_{t},y_{t+1})^2+ (E_x+E_y)\Delta_t/2\\
      &\quad+\left(\L (1+2\bar{\epsilon}_t)+\frac{(C_t^y+4\bar{\epsilon}_t-1)}{2\eta} \right)\dist(\tilde{y}^{*}_t,y_{t})^2+\left(\L (1+2\bar{\epsilon}_t)+\frac{(C_t^{x}+4\bar{\epsilon}_t-1)}{2\eta} \right)\dist(\tilde{x}^{*}_t,x_{t})^2,
    \end{aligned}
  \end{equation}
  with
\begin{align*}
  C_t^x&\defi \frac{\bar{\epsilon}_t}{1-4\bar{\epsilon}_t}\left(9+8\eta^2 \L^2+\Delta_t^{-1}\left(4+\frac{2\eta\norm{\nabla_xf(\tilde{x}_t,y_t)}^2(E_x^{-1}+48\vert \kappamin\vert \eta)}{\Delta_t}\right) \right)\\
  C_t^y&\defi \frac{\bar{\epsilon}_t}{1-4\bar{\epsilon}_t}\left(9+8\eta^2 \L^2+\Delta_t^{-1}\left(4+\frac{2\eta\norm{\nabla_yf(x_t,\tilde{y}_t)}^2(E_y^{-1}+48\vert \kappamin\vert \eta)}{\Delta_t}\right) \right).
\end{align*}
  By our choice of $\varepsilon_t$ and $\eta$ as well as $E_x$, $E_y$ and $\Delta_t$, which we specify below for $\mu=0$ and $\mu>0$ separately, we have that $\max \{C_t^x,C_t^y\}+5\bar{\epsilon}_{t}\le 1/4$ and hence \cref{eq:1-step-err-compact} can be bounded by
  \begin{equation}
    \label{eq:regret-xy-final-compact}
      f(\tilde{x}_t,y^{*})-f(x^{*},\tilde{y}_t) \le  \frac{1}{2\eta} \left( \dist(y_t,y^{*})^2+\dist(x_t,x^{*})^2+(\Delta_t-1-\mu\eta/2)(\dist(y_{t+1},y^{*})^2+\dist(x_{t+1},x^{*})^2)\right)+(E_x+E_y)\Delta_t/2.
    \end{equation}
  We now analyze the g-convex and the strongly g-convex cases separately.
\paragraph{Case \boldmath$\mu=0$.}
 We set $\Delta_t= (t+1)^{-2}$ and $E_x=E_y=\epsilon/6$. By definition, the LHS of \cref{eq:regret-xy-final-compact} is non-negative, hence by dropping it and rearranging we obtain
  \begin{equation}
    \label{eq:dist-bound-compact}
    \begin{aligned}
    &\dist(y_{t+1},y^{*})^2+\dist(x_{t+1},x^{*})^2\le (1-\Delta_t)^{-1}(\dist(y_t,y^{*})^2+\dist(x_t,x^{*})^2 + \epsilon\eta\Delta_t/3)\\
&\circled{1}[\le] (\dist(y_1,y^{*})^2+\dist(x_1,x^{*})^2)\prod_{i=1}^{t}\frac{1}{1-\Delta_i} +  \frac{\epsilon\eta}{2} \sum_{i=1}^t\Delta_t\circled{2}[\le] 2(\dist(y_1,y^{*})^2+\dist(x_1,x^{*})^2)+\epsilon\eta/2
    \end{aligned}
  \end{equation}
  where $\circled{1}$ follows by repeatedly applying this inequality and since $(1-\Delta_t)^{-1}\le 4/3$ and $\circled{2}$ holds by definition of $\Delta_t$ and \cref{prop:bound-c}.
  Summing \cref{eq:regret-xy-final-compact} from $t=1$ to $T$, dividing by $T$ and telescoping the sum, it follows that
  \begin{equation*}
  \begin{aligned}
    \frac{1}{T}\sum_{t=1}^{T}(f(\tilde{x}_t,y^{*})-f(x^{*},\tilde{y}_t)) &\le\frac{1}{2\eta T}\R^2+\sum_{t=1}^{T}\left[ \frac{\Delta_t(\dist(y_{t+1},y^{*})^2+\dist(x_{t+1},x^{*})^2)}{2\eta T} + \frac{\epsilon\Delta_t}{4T} \right]\\
                          &\circled{1}[\le]\frac{\R^2}{\eta T}+ \frac{\epsilon}{2T}\circled{2}[\le]\frac{4\L \R^2}{T} +\frac{\epsilon}{2},
  \end{aligned}
\end{equation*}
where $\circled{1}$ holds by \cref{eq:dist-bound-compact}, \cref{prop:sum-bound} and dropping the negative terms and $\circled{2}$ by the definition of $\eta=1/(4L)$ and $T\ge 1$.
By applying \cref{eq:g-avg} to the sequence $(\tilde{x}_t,\tilde{y}_t)$, we obtain
\begin{equation*}
   f(\bar{x}_T,y^{*})-f(x^{*},\bar{y}_T)\le \frac{1}{T}\sum_{t=1}^{T}(f(\tilde{x}_t,y^{*})-f(x^{*},\tilde{y}_t)).
\end{equation*}
Hence after $T=\lceil\frac{8\L \R^2}{\epsilon}\rceil$ iterations of \cref{alg:rioda}, we have that $f(\bar{x}_T,y^{*})-f(x^{*},\bar{y}_T)\le\epsilon$. This concludes the proof for the g-convex case.
\paragraph{Case \boldmath$\mu>0$.}
We set $\Delta_t=\min\{(t+1)^{-2},\frac{\mu\eta}{4}\}$, $E_x=E_y= \epsilon/4$.
  By definition, the LHS of \cref{eq:regret-xy-final-compact} is non-negative, hence by rearranging we obtain
  \begin{align*}
    \dist(y_{t+1},y^{*})^2+\dist(x_{t+1},x^{*})^2&\le  (1+\mu\eta/2-\Delta_t)^{-1} (\dist(y_t,y^{*})^2+\dist(x_t,x^{*})^2) + \frac{\epsilon\eta}{2(\Delta_t^{-1}(1+\mu\eta/2)-1)}\\
                                         &\circled{1}[\le] (1+\mu\eta/4)^{-1} (\dist(y_t,y^{*})^2+\dist(x_t,x^{*})^2) + \frac{\epsilon\eta}{2t^2},
\end{align*}
where $\circled{1}$ holds by the definition of $\Delta_t$.
Then by repeatedly applying the inequality, we have
\begin{equation*}
  \dist(y_{t+1},y^{*})^2+\dist(x_{t+1},x^{*})^2\le  (1+\mu\eta/4)^{-t} \R^2 + \frac{\epsilon\eta}{2}\sum_{k=1}^{t}k^{-2} \circled{1}[\le] (1+\frac{\mu}{16L})^{-t} \R^2 + \frac{\epsilon}{4L}\circled{2}[\le] \exp\left(\frac{-t\mu}{17L}\right) \R^2 + \frac{\epsilon}{4L},
\end{equation*}
where $\circled{1}$ holds since $\sum_{t=1}^{t+1}t^{-2}\le \frac{\pi^2}{6}\le 2$ and by definition of $\eta=1/(4L)$ and $\circled{2}$ holds by the following inequality
\begin{equation}\label{eq:kappa-constant}
 \frac{1}{1+\mu/(16L)} = 1 - \frac{\mu/(16L)}{1+\mu/(16L)}\le 1 - \frac{\mu}{17L}\le \exp\left(\frac{-\mu}{17L}\right),
\end{equation}
which holds as $\mu/\L \le 1$ and $1+x\le \exp(x)$.
Hence, running \cref{alg:rioda} for $T= \lceil\frac{17L}{\mu}\log \left(\frac{4\L \R^2}{\epsilon}  \right)\rceil$ iterations implies that
\begin{equation*}
  \dist(y_{T},y^{*})^2+\dist(x_{T},x^{*})^2\le  \frac{\epsilon}{2L}.
\end{equation*}
Dropping the non-positive distance from \cref{eq:regret-xy-final-compact} for $t\gets T$ and using the definition of $\eta$, we obtain
\begin{equation*}
 f(\tilde{x}_T,y^{*})-f(x^{*},\tilde{y}_T)\le 2L (\dist(y_{T},y^{*})^2+\dist(x_{T},x^{*})^2) \le \epsilon,
\end{equation*}
which concludes the proof.
\end{proof}
\riodaunconstrained*
\begin{proof}\linkofproof{thm:rioda-unconstrained}
  Let $\bar{\epsilon}_t\defi 2\eta \max\{\varepsilon_t^x,\varepsilon_t^y\}$. Hence by \cref{eq:Ft-criterion,eq:Ftt-criterion}, we have that
  \begin{equation}
    \label{eq:rioda-criteria-unbounded}
    \dist(x_{t+1},x_{t+1}^*)^2\le \bar{\epsilon}_t\dist(x_{t},x_{t+1}^*)^2, \dist(\tilde{x}_t,\tilde{x}_t^{*})^2\le\bar{\epsilon}_t\dist(x_t,\tilde{x}_t^{*})^2,  \dist(y_{t+1},y_{t+1}^*)^2\le \bar{\epsilon}_t\dist(y_{t},y_{t+1}^*)^2, \dist(\tilde{y}_t,\tilde{y}_t^{*})^2\le\bar{\epsilon}_t\dist(y_t,\tilde{y}_t^{*})^2.
  \end{equation}
  We show that the iterates stay in a bounded set $  \ball(x^{*},7\R)\times  \ball(y^{*},7\R)$.
  In particular, we show via induction that $\dist(x_t,x^{*})+\dist(y_t,y^{*})\le 2\R$ and then show that $\dist(\tilde{x}_{t},x^{*})+\dist(\tilde{y}_{t},y^{*})\le 7\R$ and $\dist(x_{t+1},x^*)+\dist(y_{t+1},y^{*})\le 7\R$.
  For $t=1$, we have that $\dist(x_1,x^{*})+\dist(y_1,y^{*})\le 2\R$ by definition. Assume that $\dist(x_t,x^{*})+\dist(y_t,y^{*})\le 2\R$ holds, now we will prove that it also holds for $t+1$.
 We have that
  \begin{equation}\label{eq:z-tilde-zast}
    \begin{aligned}
      \dist(\tilde{x}_t,x^{*})+\dist(\tilde{y}_t,y^{*})&\circled{1}[\le] \dist(\tilde{x}_t,\tilde{x}_t^{*})+\dist(\tilde{y}_t,\tilde{y}_t^{*})+\dist(\tilde{x}_t^{*},x^{*})+\dist(\tilde{y}_t^{*},y^{*}) \circled{2}[\le] \frac{\dist(x_t,\tilde{x}_t^{*})+\dist(y_t,\tilde{y}_t^{*})}{4}+\dist(\tilde{x}_t^{*},x^{*})+\dist(\tilde{y}_t^{*},y^{*})\\
      &\circled{3}[\le] \frac{1}{4}(\dist(x_t,x^{*})+\dist(y_t,y^{*}))+\frac{5}{4}(\dist(\tilde{x}_t^{*},x^{*})+\dist(\tilde{y}_t^{*},y^{*}))\circled{4}[\le] \frac{7}{2}(\dist(x_t,x^{*})+\dist(y_t,y^{*})),
    \end{aligned}
  \end{equation}
  where $\circled{1}$ holds by the triangle inequality, $\circled{2}$ holds by  \cref{eq:rioda-criteria-unbounded} and $\sqrt{\bar{\epsilon}_t}\le 1/4$, $\circled{3}$ holds by the triangle inequality and $\circled{4}$ holds since $\dist(\tilde{x}_t^{*},x^{*})+\dist(\tilde{y}_t^{*},y^{*})\le (9/4)(\dist(x_t,x^{*})+\dist(y_t,y^{*}))$ by \cref{prop:xy-tilde-bound}.

  Further, we have that
  \begin{equation}
    \label{eq:zp1-z-ast-dist}
    \begin{aligned}
     \dist(x_{t+1},x^{*})+\dist(y_{t+1},y^{*})&\circled{1}[\le]\dist(x_{t+1},x_{t+1}^{*})+\dist(y_{t+1},y_{t+1}^{*})+ \dist(x^{*}_{t+1},x^{*})+\dist(y^{*}_{t+1},y^{*})\\
     &\circled{2}[\le]\frac{1}{4}(\dist(x_{t},x_{t+1}^{*})+\dist(y_{t},y_{t+1}^{*}))+ \dist(x^{*}_{t+1},x^{*})+\dist(y^{*}_{t+1},y^{*})\\
     &\circled{3}[\le]\frac{1}{4}\left(\dist(x_{t},x^{*})+\dist(y_{t},y^{*}))\right)+ \frac{5}{4}(\dist(x^{*}_{t+1},x^{*})+\dist(y^{*}_{t+1},y^{*}))\\
     &\circled{4}[\le] \frac{7}{2}(\dist(x_{t},x^{*})+\dist(y_{t},y^{*}) ),
    \end{aligned}
  \end{equation}
  where $\circled{1}$ holds by the triangle inequality, $\circled{2}$ holds by  \cref{eq:rioda-criteria-unbounded} and $\sqrt{\bar{\epsilon}_t}\le 1/4$, $\circled{3}$ holds by the triangle inequality and $\circled{4}$ holds since $\dist(x_{t+1}^{*},x^{*})+\dist(y_{t+1}^{*},y^{*})\le \frac{41}{16}(\dist(x_t,x^{*})+\dist(y_t,y^{*}))$ by \cref{prop:xy-tilde-bound}.
  We have thus established that $\tilde{x}_t$, $x_{t+1}$ and $\tilde{y}_t$, $y_{t+1}$ lie in $\ball(x^{*},7\R)$ and $\ball(y^{*},7\R)$, respectively. Recall that by assumption, $f$ is $\mu$-SCSC and $\L $-smooth in $\ball(x^{*},8\R)\times \ball(y^{*},8\R)$. Therefore, we can apply \cref{lem:reduction} with $\X\gets \ball(x^{*},8\R)$ and $\Y\gets \ball(y^{*},8\R)$ as the iterates are guaranteed to lie in the interior of the sets and hence the constraints are never active.
Thus, applying \cref{lem:reduction} with $E_x= \frac{\dist(x_t,\tilde{x}_t^{*})^2}{8\eta}$ and  $E_{y}= \frac{\dist(y_t,\tilde{y}_t^{*})^2}{8\eta}$, we obtain
  \begin{equation}\label{eq:1-step-err}
    \begin{aligned}
       f(\tilde{x}_t,y^{*})-f(x^{*},\tilde{y}_t)&\le\frac{1}{2\eta}\left(  \dist(y_t,y^{*})^2+\dist(x_t,x^{*})^2+(\Delta_t-1-\mu\eta/2)(\dist(y_{t+1},y^{*})^2+\dist(x_{t+1},x^{*})^2) \right)\\
    &\quad +\left(\L +\frac{(C_t^x-3/4)}{2\eta} \right)\dist(\tilde{x}^{*}_{t},x_{t+1})^2+\left(\L +\frac{(C_t^y-3/4)}{2\eta} \right)\dist(\tilde{y}^{*}_{t},y_{t+1})^2\\
    &\quad+\left(\L (1+2\bar{\epsilon}_t)+\frac{(C_t^y+4\bar{\epsilon}_t-3/4)}{2\eta} \right)\dist(\tilde{y}^{*}_t,y_{t})^2+\left(\L (1+2\bar{\epsilon}_t)+\frac{(C_t^{x}+4\bar{\epsilon}_t-3/4)}{2\eta} \right)\dist(\tilde{x}^{*}_t,x_{t})^2
    \end{aligned}
  \end{equation}
where
\begin{align*}
  C_t^x&\defi \frac{\bar{\epsilon}_t}{1-4\bar{\epsilon}_t}\left(9+\Delta_t^{-1}\left(4+2\eta\norm{\nabla_xf(\tilde{x}_t,y_t)}^2(E^{-1}_x+48\vert \kappamin\vert \eta)\right) +8\eta^2 \L^2\right)\\
  C_t^y&\defi \frac{\bar{\epsilon}_t}{1-4\bar{\epsilon}_t}\left(9+\Delta_t^{-1}\left(4+2\eta\norm{\nabla_yf(x_t,\tilde{y}_t)}^2(E^{-1}_y+48\vert \kappamin\vert \eta)\right) +8\eta^2 \L^2\right).
\end{align*}
We need to ensure that $\max\{C_t^x,C_t^y\}+5\bar{\epsilon}_{t}\le 1/8$ in order to cancel out the summands in the second and third line of \cref{eq:1-step-err}.
Therefore, we show a bound for $C_t^x$ and $C_t^y$ in order to finish the induction argument.
Note that
  \begin{equation}\label{eq:z-ztilde-ast-dist}
    \dist(x_t,\tilde{x}^{*}_t)+\dist(y_t,\tilde{y}^{*}_t)\circled{1}[\le] \dist(x_t,x^{*})+\dist(y_t,y^{*})+\dist(x^{*},\tilde{x}^{*}_t)+\dist(y^{*},\tilde{y}^{*}_t)\circled{2}[\le]\frac{13}{4}(\dist(x_t,x^{*})+\dist(y_t,y^{*})),
  \end{equation}
  where $\circled{1}$ holds by the triangle inequality and $\circled{2}$ holds by \cref{prop:xy-tilde-bound}.
  Squaring \cref{eq:z-ztilde-ast-dist} and noting that $a^2+b^2\le (a+b)^{2}\le 2(a^2+b^{2})$ for $a,b>0$, we have
  \begin{equation}\label{eq:z-ztilde-ast-dist-squared}
    \dist(x_t,\tilde{x}^{*}_t)^2+\dist(y_t,\tilde{y}^{*}_t)^2 \le  22(\dist(x_t,x^{*})^2+\dist(y_t,y^{*})^2).
  \end{equation}
  Further, we have
\begin{equation}\label{eq:rioda-grad-bound}
  \norm{\nabla_xf(\tilde{x}_t,y_t)}\circled{1}[\le]   \norm{\nabla_xf(\tilde{x}_t,y_t)-\Gamma{\tilde{x}_t^{*}}{\tilde{x}_t}\nabla_xf(\tilde{x}_t^{*},y_t)}+  \norm{\nabla_xf(\tilde{x}^{*}_t,y_t)}\circled{2}[\le] L\dist(\tilde{x}_t,\tilde{x}_t^*) + \eta^{-1}\dist(x_t,\tilde{x}_t^{*})\circled{3}[\le] \frac{17L}{4}\dist(x_t,\tilde{x}_t^{*}).
\end{equation}
Here $\circled{1}$ holds by the triangle inequality, $\circled{2}$ holds by smoothness of $f$ between $x_t$ and $\tilde{x}_t$, which both lie in $\ball(x^{*},7\R/\sqrt{2})$ by the induction assumption, and \cref{eq:z-tilde-zast}.
The optimality condition of $\tilde{x}_t^{*}$, i.e., $\nabla_xf(\tilde{x}_t^{*},y_t)=\frac{1}{\eta}\exponinv{\tilde{x}_t^{*}}(x_t)$ was also used. Lastly,  $\circled{3}$ follows by definition of $\eta=1/(4L)$ and by \cref{eq:rioda-criteria-unbounded} and $\sqrt{\bar{\epsilon}_t}\le 1/4$.
Analogously one can show that $\norm{\nabla_yf(y_t,\tilde{y}_t)}\le \frac{17L}{4}\dist(y_t,\tilde{y}_t^{*})$.
Using these bounds and the definition of $\eta$, we obtain
\begin{equation}
  \begin{aligned}
    C_t^x&\circled{1}[\le] \frac{\bar{\epsilon}_t}{1-4\bar{\epsilon}_t}\left(10+\Delta_t^{-1}\left(22+109\dist(x_t,\tilde{x}_t^{*})^{2}\vert \kappamin\vert \right)\right)\circled{2}[\le] \frac{\bar{\epsilon}_t}{1-4\bar{\epsilon}_t}\left(10+\Delta_t^{-1}\left(22+2385\R^2\vert \kappamin\vert \right)\right)\\
    C_t^x&\circled{1}[\le] \frac{\bar{\epsilon}_t}{1-4\bar{\epsilon}_t}\left(10+\Delta_t^{-1}\left(22+109\dist(y_t,\tilde{y}_t^{*})^{2}\vert \kappamin\vert \right)\right)\circled{2}[\le] \frac{\bar{\epsilon}_t}{1-4\bar{\epsilon}_t}\left(10+\Delta_t^{-1}\left(22+2385\R^2\vert \kappamin\vert \right)\right)
  \end{aligned}
\end{equation}
where for both inequalities, $\circled{1}$ follows by \cref{eq:rioda-grad-bound} and $\circled{2}$ follows by \cref{eq:z-ztilde-ast-dist-squared} and the induction hypothesis.
Our bounds on $C_t^x$ and $C_t^y$ and our choice of $\varepsilon_t$, $E_x$, $E_y$ and $\eta$ as well as $\Delta_t$, which we specify below for $\mu=0$ and $\mu>0$ separately, ensure that $\max\{C_t^x,C_t^y\}+5\bar{\epsilon}_{t}\le 1/8$ and the update rules in Lines~\ref{line:xy-tilde-criterion} and~\ref{line:xy-criterion} can be implemented efficiently.
Hence we have,
  \begin{equation}
    \label{eq:regret-xy-final}
      f(\tilde{x}_t,y^{*})-f(x^{*},\tilde{y}_t) \le  \frac{1}{2\eta} \left( \dist(y_t,y^{*})^2+\dist(x_t,x^{*})^2+(\Delta_t-1-\mu\eta/2)(\dist(y_{t+1},y^{*})^2+\dist(x_{t+1},x^{*})^2)\right).
    \end{equation}
  We now analyze the g-convex and the strongly g-convex case separately.
\paragraph{Case \boldmath$\mu=0$.}
We set $\Delta_t= (t+1)^{-2}$.
By definition, the LHS of \cref{eq:regret-xy-final} is non-negative, hence by rearranging we obtain
  \begin{equation}
    \label{eq:dist-bound}
    \begin{aligned}
      \dist(x_{t+1},x^{*})^2+\dist(y_{t+1},y^{*})^2&\le \frac{\dist(x_t,x^{*})^2+\dist(y_t,y^{*})^2}{1-\Delta_t}\circled{1}[\le](\dist(x_1,x^{*})^2+\dist(y_1,y^{*})^2)\prod_{i=1}^{t}\frac{1}{1-\Delta_i}\\
      & \circled{2}[\le] 2 (\dist(x_1,x^{*})^2+\dist(y_1,y^{*})^2)
    \end{aligned}
  \end{equation}
  where $\circled{1}$ follows by repeatedly applying this inequality and $\circled{2}$ holds by \cref{prop:bound-c}.
 Noting that $a^2+b^2\le (a+b)^{2}\le 2(a^2+b^{2})$ for $a,b>0$, this proves the induction statement, as $\dist(x_{t+1},x^{*})+\dist(y_{t+1},y^{*})\le  2\R$.
  Summing \cref{eq:regret-xy-final} from $t=1$ to $T$, dividing by $T$ and telescoping the sum, it follows that
  \begin{equation*}
  \begin{aligned}
    \frac{1}{T}\sum_{t=1}^{T}(f(\tilde{x}_t,y^{*})-f(x^{*},\tilde{y}_t)) &\le\frac{1}{2\eta T}(\dist(x_1,x^*)^2+\dist(y_1,y^{*})^2)+\sum_{t=1}^{T}\frac{ \Delta_t(\dist(x_t,x^*)^2+\dist(y_t,y^{*})^2)}{2\eta T}\\
&\circled{1}[\le]\frac{\dist(x_1,x^*)^2+\dist(y_1,y^{*})^2}{2\eta T}\left(2+\sum_{t=1}^T\Delta_t\right)\circled{2}[\le]\frac{\dist(x_1,x^*)^2+\dist(y_1,y^{*})^2}{\eta T} \circled{3}[=] \frac{6\L \R^2}{T},
  \end{aligned}
\end{equation*}
where $\circled{1}$ holds by \cref{eq:dist-bound}, $\circled{2}$ holds by \cref{prop:sum-bound} and $\circled{3}$ by definition of $\eta$ and $\R$.
By applying \cref{eq:g-avg} to the sequence $(\tilde{x}_t,\tilde{y}_t)$, we obtain
\begin{equation*}
   f(\bar{x}_T,y^{*})-f(x^{*},\bar{y}_T)\le \frac{1}{T}\sum_{t=1}^{T}(f(\tilde{x}_t,y^{*})-f(x^{*},\tilde{y}_t)).
\end{equation*}
Hence after $T=\lceil\frac{6\L \R^2}{\epsilon}\rceil$ iterations of \cref{alg:rioda}, we have that $f(\bar{x}_T,y^{*})-f(x^{*},\bar{y}_T)\le\epsilon$. This concludes the proof for the g-convex case.
\paragraph{Case \boldmath$\mu>0$.}
We set $\Delta_t=\frac{\mu\eta}{4}$.
  By definition, the LHS of \cref{eq:regret-xy-final} is non-negative, hence by rearranging and using the definition of $\Delta_t$, we obtain
  \begin{align*}
    \dist(y_{t+1},y^{*})^2+\dist(x_{t+1},x^{*})^2&\le \frac{1}{1+\mu\eta/4}(\dist(y_t,y^{*})^2+\dist(x_t,x^{*})^2) \\
&\circled{1}[\le] \exp\left(\frac{-t\mu}{17L}\right) (\dist(y_t,y^{*})^2+\dist(x_t,x^{*})^2),
\end{align*}
where $\circled{1}$ holds by \cref{eq:kappa-constant}.
Noting that $a^2+b^2\le (a+b)^{2}\le 2(a^2+b^{2})$ for $a,b>0$. This proves the induction statement, as $\dist(x_{t+1},x^{*})+\dist(y_{t+1},y^{*})\le 2 \R$.
Then by repeatedly applying the inequality, we have
\begin{equation*}
  \dist(y_{t+1},y^{*})^2+\dist(x_{t+1},x^{*})^2\le \exp\left(\frac{-t\mu}{17L}\right)\R^2.
\end{equation*}
Hence, running \cref{alg:rioda} for $T= \frac{17L}{\mu}\log \left(\frac{2\L \R^2}{\epsilon}  \right)$ iterations implies that
\begin{equation*}
  \dist(y_{T},y^{*})^2+\dist(x_{T},x^{*})^2\le \frac{\epsilon}{2L}.
\end{equation*}
Dropping negative terms from \cref{eq:regret-xy-final} for $t\gets T$ and using the definition of $\eta$, we obtain
\begin{equation*}
 f(\tilde{x}_T,y^{*})-f(x^{*},\tilde{y}_T)\le 2L (\dist(y_{T},y^{*})^2+\dist(x_{T},x^{*})^2) \le \epsilon,
\end{equation*}
which concludes the proof.
\end{proof}

\section{Implementing RIOD and RIODA}
\label{sec:impl-riod-rioda}
In this section, we use $x^{\tau}_t$ and $\tilde{x}^{\tau}_t$ to refer to the $\tau$-th iterates of subroutines minimizing $\Ft$ and $\Ftt$ starting from $x_t$, i.e., $x_t=x_t^{0}=\tilde{x}_t^0$. Further, if $\tau$ is the last iterate, we write $\tilde{x}^{\tau}_t=\tilde{x}_t$ and $x_t^{\tau}=x_{t+1}$.
\implementingriod*
\begin{proof}\linkofproof{cor:implement-riod}
Note that we provide the analysis for the criterion of $\tilde{x}_t$, but the analysis for $x_{t+1}$ follows by the same arguments.
  Since  $\tilde{\ell}_{t}$ is differentiable, there exists a constant $\Lips \ge 0$ such that $\norm{\nabla\tilde{\ell}_{t}(x)}\le \Lips $ for all $x$ in the compact set $\X$, which implies $\Lips $-Lipschitzness.
    Note that $\Ftt$ is $(\L +\zeta_{\DOL} /\eta)$-smooth and $(1/\eta)$-strongly g-convex in $\X$, since the regularizer $\frac{1}{2\eta}\dist(x,x_t)^2$ is $(\zeta_{\DOL} /\eta)$-smooth and $1/(\eta)$-strongly g-convex in $\X$ by \cref{fact:cosine_law_riemannian}. Further we have for $x\in \X$ that
  \begin{equation*}
   \norm{\nabla\Ftt(x)} =\norm{\nabla\tilde{\ell}_{t}(x)-\eta^{-1}\exponinv{x}(x_t)}\le \norm{\nabla\tilde{\ell}_{t}(x)}+\eta^{-1}\dist(x,x_t)\circled{1}[\le] \Lips  + \frac{D}{\eta},
 \end{equation*}
 where $\circled{1}$ follows since $\tilde{\ell}_{t}$ is $\Lips $-Lipschitz and $x, x_t\in \X$, which implies that $\Ftt$ is $(\Lips +D/\eta)$-Lipschitz.
Recall that we require the following for the subproblem
\begin{equation}\label{eq:Ftt-criterion-reminder}
 \Ftt(\tilde{x}_{t})-\Ftt(\tilde{x}_t^{*})\le \varepsilon_t\dist(x_t,\tilde{x}_t^{*})^2.
\end{equation}
In the following, we compute the gradient oracle complexity of ensuring \cref{eq:Ftt-criterion-reminder} using the upper bound $\Lips $ in order to show the worst-case complexity. We then show that the criterion can also be implemented in a way that adapts to the local gradient norm and does not require knowledge of $\Lips $.

\paragraph*{PRGD.}
By \cref{fact:prgd-convergence}, running $\tau$ iterations of \PRGD{} on $\Ftt$, starting at $x_t$, we have that
\begin{equation*}
 \Ftt(\tilde{x}_t)-\Ftt(\tilde{x}^{*}_t)\le \frac{(\L +\zeta_{\DOL} /\eta)\zeta_{\tilde{R}}}{2} \exp \left( \frac{-(\tau-1)}{4(\L \eta+\zeta_{\DOL} )\zeta_{\tilde{R}}} \right) \dist(x_t,\tilde{x}_t^{*})^2,
\end{equation*}
where $\tilde{R}=\frac{\Lips +D/\eta}{\L +\zeta_{\DOL} /\eta}$. Hence, it is sufficient to run \PRGD{} for
\begin{equation*}
 \tau=\bigol{(\L \eta+\zeta_{\DOL} )\zeta_{\tilde{R}}\log \left( \frac{(\L +\zeta_{\DOL} /\eta)\zeta_{\tilde{R}}}{\varepsilon_t} \right)}
\end{equation*}
iterations in order to satisfy \cref{eq:Ftt-criterion-reminder}.
We now show that this criterion can be implemented without knowledge of $\Lips $.
We have that
\begin{equation*}
  \begin{aligned}
  \Ftt(\tilde{x}_t^{\tau})-\Ftt(\tilde{x}_t^{*})&\circled{1}[\le] (\Ftt(\tilde{x}_t^{\tau-1})-\Ftt(\tilde{x}_t^{*}))\left(1-\frac{1}{4(\L \eta+\zeta_{\DOL} )\zeta_{\tilde{R}_{\tau-1}}}\right)\circled{2}[\le] (\Ftt(\tilde{x}_t^1)-\Ftt(\tilde{x}_t^{*}))\prod_{i=1}^{\tau-1}\left(1-\frac{1}{4(\L \eta+\zeta_{\DOL} )\zeta_{\tilde{R}_i}}\right)\\
    &\circled{3}[\le] \frac{(\L +\zeta_{\DOL} /\eta)\zeta_{\tilde{R}_0}}{2}\dist(x_t,\tilde{x}_t^{*})^2\prod_{i=1}^{\tau-1}\left(1-\frac{1}{4(\L \eta+\zeta_{\DOL} )\zeta_{\tilde{R}_i}}\right),
  \end{aligned}
\end{equation*}
where $\tilde{R}_{\tau}\defi \norm{\nabla \tilde{L }_t(\tilde{x}_t^{\tau})}/(\L +\zeta_{\DOL} /\eta)= \norm{\nabla \tilde{\ell}_t(\tilde{x}_t^{\tau})-\eta^{-1}\exponinv{\tilde{x}_t^{\tau}}(x_t)}/(\L +\zeta_{\DOL} /\eta)$. Here $\circled{1}$ holds by \cref{fact:prgd-convergence}, $\circled{2}$ by repeatedly applying the prior inequality and $\circled{3}$ by \citep[Lemma 18]{martinez2023acceleratedmin}.
It follows that we need to run \PRGD{} for $\tau\ge 2$ iterations until
\begin{equation*}
  \frac{(\L +\zeta_{\DOL} /\eta)\zeta_{\tilde{R}_0}}{2}\prod_{i=1}^{\tau-1}\left(1-\frac{1}{4(\L \eta+\zeta_{\DOL} )\zeta_{\tilde{R}_i}}\right)\le \varepsilon_t = \frac{\max\{4,(t+1)^2 (15+8\eta^2 \L^2+2\eta^{2}\norm{\nabla \tilde{\ell}_{t}(\tilde{x}_t^{\tau})}^2(D^{-2}+48\vert \kappamin\vert ) )\}^{-1}}{8\eta}.
\end{equation*}
Note that $\nabla \tilde{L }_t(\tilde{x}_t^{\tau})= \nabla \tilde{\ell}_{t}(\tilde{x}_t^{\tau})-\eta^{-1}\exponinv{\tilde{x}_t^{\tau}}(x_t)$ has to be computed anyways for each iterations of \PRGD{}, hence this criterion can be checked with little computational overhead.
\paragraph*{CRGD.}
After $\tau$ iterations of \CRGD{} on $\Ftt$ starting from $x_t$, we have
\begin{equation}\label{eq:riod-crgd-rate}
 \Ftt(\tilde{x}^{\tau}_t)-\Ftt(\tilde{x}^{*}_t)\le \frac{\L }{2} \exp \left( -(\tau-1)\min\left\{\frac{1}{4L\eta},\frac{1}{2}\right\} \right) \dist(x_t,\tilde{x}_t^{*})^2.
\end{equation}
Here we used \cref{fact:crgd-convergence} and \cref{cor:composite-warm-start} with
 $f\gets \tilde{\ell}_t$ and $g\gets \frac{1}{2\eta}\dist(\cdot,x_t)^2$, noting that $\Ftt$ is $(1/\eta)$-strongly g-convex and $\tilde{\ell}_t$ is $\L $-smooth.
Hence it is sufficient to run \CRGD{} for $\tau=\bigol{(1+\L \eta) \log \left( \frac{\L }{\varepsilon_t} \right)}$ iterations in order to satisfy \cref{eq:Ftt-criterion-reminder}.

We now show that this criterion can be implemented without knowledge of $\Lips $.
By \cref{eq:riod-crgd-rate}, it follows that we need to run \CRGD{} for $\tau\ge 1$ iterations until
\begin{equation*}
  \frac{\L }{2} \exp \left( -(\tau-1)\min\left\{\frac{1}{4L\eta},\frac{1}{2}\right\} \right)\le \varepsilon_t = \frac{\max\{4,(t+1)^2 (15+8\eta^2 \L^2+2\eta^{2}\norm{\nabla \tilde{\ell}_{t}(\tilde{x}_t^{\tau})}^2(D^{-2}+48\vert \kappamin\vert ) )\}^{-1}}{8\eta}
\end{equation*}
Note that $\nabla \tilde{L }_t(\tilde{x}_t^{\tau})=\nabla\tilde{\ell}_t(\tilde{x}_t^{\tau})-\frac{1}{\eta}\exponinv{\tilde{x}^{\tau}_t}(x_t)$ has to be computed anyways for each iterations of \CRGD{}, hence this criterion can be checked with little computational overhead.

\end{proof}

\implementrioda*
\begin{proof}\linkofproof{cor:implement-rioda}
  We discuss the implementation of the criteria for $x$, but the proof for $y$ follows analogously.
  The proof follows by applying \cref{cor:implement-riod} to the update rules of $x$ and $y$, taking into account the definitions of $\varepsilon_t$ and $\eta$ in \cref{alg:rioda} and the following properties of $f$.
   We have that $\Ftt(x)=f(x,y_t)+ \frac{1}{2\eta}\dist(x,x_t)^2$ is $(\L +\zeta_{\DMM} /\eta)$-smooth and $(1/\eta)$-strongly g-convex, because $f(x,y_t)$ is $\L $-smooth and the regularizer $\frac{1}{2\eta}\dist(x,x_t)^2$ is $(\zeta_{\DMM} /\eta)$-smooth and $(1/\eta)$-strongly g-convex in $\X$ by \cref{fact:cosine_law_riemannian}. Further, since $f(\cdot,y_t)$ is differentiable, there exists a constant $\Lips $, such that $\norm{\nabla_xf(x,y_t)}\le \Lips $ for all $x\in \X$. Since $f(\cdot,y_t)$ is also g-convex, this implies $\Lips $-Lipschitzness. It follows that
  \begin{equation*}
   \norm{\nabla\Ftt(x)} =\norm{\nabla_{x}f(x,y_t)-\eta^{-1}\exponinv{x}(x_t)}\circled{1}[\le] \norm{\nabla_x f(x,y_t)}+\eta^{-1}\dist(x,x_t)\le \Lips  + 4LD,
 \end{equation*}
 where $\circled{1}$ follows since $f(\cdot,x_t)$ is $\Lips $-Lipschitz and $x,x_t\in \X$.
This implies that $\Ftt$ is $(\Lips +4LD)$-Lipschitz.
\end{proof}
\implementriodaunconstrained*
\begin{proof}\linkofproof{cor:implement-rioda-unconstrained}
  We discuss the implementation of the criteria for $x$, but the proof for $y$ follows analogously.
  Recall that we have
  \begin{equation}
    \label{eq:Ct-rioda-unbounded}
\begin{aligned}
    C_t^x&\circled{1}[\le] \frac{\bar{\epsilon}_t}{1-4\bar{\epsilon}_t}\left(10+\Delta_t^{-1}\left(22+109\dist(x_t,\tilde{x}_t^{*})^{2}\vert \kappamin\vert \right)\right)\circled{2}[\le] \frac{\bar{\epsilon}_t}{1-4\bar{\epsilon}_t}\left(10+\Delta_t^{-1}\left(22+2385\R^2\vert \kappamin\vert \right)\right)\\
    C_t^x&\circled{1}[\le] \frac{\bar{\epsilon}_t}{1-4\bar{\epsilon}_t}\left(10+\Delta_t^{-1}\left(22+109\dist(y_t,\tilde{y}_t^{*})^{2}\vert \kappamin\vert \right)\right)\circled{2}[\le] \frac{\bar{\epsilon}_t}{1-4\bar{\epsilon}_t}\left(10+\Delta_t^{-1}\left(22+2385\R^2\vert \kappamin\vert \right)\right).
\end{aligned}
  \end{equation}
We will make use of $\circled{2}$ to compute the worst-case complexity of implementing the criterion using $\R$ and $\circled{1}$ to analyze the implementation that adapts to the value of $\dist(x_t,\tilde{x}^{*}_t)^2$ or $\dist(y_t,\tilde{y}^{*}_t)^2$.
  \paragraph*{CRGD.}
  Recall that $\Ft(x)= f(x,y_t) + \frac{1}{2\eta}\dist(x,x_t)^2$ and $\Ftt(x)= f(x,y_t) + \frac{1}{2\eta}\dist(x,\tilde{x}_t)^2$ with $\eta=1/(4L)$. Note that $\Ft$ and $\Ftt$ are $(1/\eta)$-strongly g-convex and $f$ is $\L $-smooth in $x\in \ball(x^{*},8\R)$.
  Then we have
  \begin{equation}\label{eq:rioda-crgd-bound-tilde}
   \dist(\tilde{x}_t^{\tau+1},\tilde{x}_t^{*})\circled{1}[\le]  2^{-\tau}\dist(x_t,\tilde{x}_t^{*})\circled{2}[\le] 2^{-\tau}(\dist(x_t,x^{*})+\dist(\tilde{x}_t^{*},x^{*}))\circled{3}[\le] \frac{13}{4}\R,
 \end{equation}
 and
 \begin{equation}\label{eq:rioda-crgd-bound}
  \dist(x_t^{\tau+1},x_{t+1}^{*})\circled{1}[\le]  2^{-\tau}\dist(x_t,x_{t+1}^{*})\circled{2}[\le] 2^{-\tau}(\dist(x_t,x^{*})+\dist(x_{t+1}^{*},x^{*}))\circled{3}[\le] \frac{57}{16}\R
 \end{equation}
 where for both inequalities, $\circled{1}$ holds by repeatedly applying \cref{cor:crgd-nonexp}, $\circled{2}$ holds by the triangle inequality and $\circled{3}$ holds by \cref{prop:xy-tilde-bound}, since $\dist(x_t,x^{*})+\dist(y_t,y^{*})\le 2\R$ and, $\tau\ge 0$.
 Hence we have for any $\tau\ge 0$
 \begin{equation*}
  \dist(\tilde{x}_t^{\tau},x^{*})\circled{1}[\le]  \dist(\tilde{x}_t^{\tau},\tilde{x}_t^{*})+\dist(\tilde{x}_t^{*},x^{*})\circled{2}[\le] \left(\frac{13}{4}+\frac{9}{2}\right)\R
\end{equation*}
and
\begin{equation*}
  \dist(x_t^{\tau},x^{*})\circled{1}[\le]  \dist(x_t^{\tau},x_{t+1}^{*})+\dist(x_{t+1}^{*},x^{*})\circled{2}[\le] \left(\frac{57}{16}+\frac{41}{8}\right)\R,
\end{equation*}
where for both inequalities, $\circled{1}$ holds by the triangle inequality and $\circled{2}$ holds by \cref{eq:rioda-crgd-bound-tilde,eq:rioda-crgd-bound} and \cref{prop:xy-tilde-bound}.
As described in \cref{cor:implement-riod}, the complexity of implementing the criterion using \CRGD{} is $\bigol{\log(\L /\varepsilon_t)}$.

We now show that this criterion can be implemented without knowledge of $\R$.
We have by \cref{eq:riod-crgd-rate} that
\begin{equation*}
  \Ftt(\tilde{x}_t^{\tau})-\Ftt(\tilde{x}_t^{*})\le 2^{-\tau}L\dist(x_t,\tilde{x}_t^{*})^2.
\end{equation*}
Note that
\begin{equation}\label{eq:rioda-xt-tilde-xt-tilde-ast}
  \begin{aligned}
    \dist(x_t,\tilde{x}^{*}_t)^{2}&\circled{1}[\le]  2\dist(x_t,\tilde{x}_t)^2+2\dist(\tilde{x}_t,\tilde{x}^{*}_t)^2\circled{2}[\le]  2\dist(x_t,\tilde{x}_t)^2+\frac{1}{8}\dist(x_t,\tilde{x}^{*}_t)^2\\
\Leftrightarrow \dist(x_t,\tilde{x}^{*}_t)^2&\circled{3}[\le]3\dist(x_t,\tilde{x}_t)^2,
  \end{aligned}
\end{equation}
where $\circled{1}$ holds by the triangle inequality, $\circled{2}$ holds by \cref{eq:rioda-criteria-unbounded} and $\bar{\epsilon}_t$, and $\circled{3}$ by rearranging.
Applying this bound to the RHS of $\circled{1}$ in \cref{eq:Ct-rioda-unbounded}, it follows that choosing $\varepsilon_t= \L \min\left\{1/8, \left(\Delta_t^{-1}(32+327 \dist(x_t,\tilde{x}^{\tau}_t)^2\vert \kappamin\vert )  \right)^{-1} \right\}$ is sufficient to ensure $C_t^x+5\bar{\epsilon}_t\le 1/8$.
It follows that it is enough to run \CRGD{} for $\tau\ge 1$ iterations until
\begin{equation*}
  2^{-\tau}\le \varepsilon_t = \L \min\left\{1/8, \left((t+1)^2(32+327 \dist(x_t,\tilde{x}^{\tau}_t)^2\vert \kappamin\vert )  \right)^{-1} \right\}
\end{equation*}
for $\mu=0$ and
\begin{equation*}
  2^{-\tau}\le \varepsilon_t =\L \min\left\{1/8, 4L\left(\mu(25+220 \dist(x_t,\tilde{x}^{\tau}_t)^2\vert \kappamin\vert )  \right)^{-1} \right\}
\end{equation*}
for $\mu>0$.
Note that $\nabla \tilde{L}_t(\tilde{x}_t^{\tau})=\nabla\tilde{\ell}_t(\tilde{x}_t^{\tau})-\frac{1}{\eta}\exponinv{\tilde{x}^{\tau}_t}(x_t)$ has to be computed anyways for each iterations of \CRGD{} and $\dist(x_t,\tilde{x}^{\tau}_t)=\norm{\exponinv{\tilde{x}^{\tau}_t}(x_t)}$, hence this criterion can be checked with little computational overhead.
\paragraph*{RGD.}
 Applying \cref{fact:rgd-convergence} to $\Ftt$ and $\Ft$, we have for the iterates of \RGD{} that
 \begin{equation}
   \label{eq:rioda-rgd-bound}
\dist(\tilde{x}_t^{\tau},\tilde{x}_t^{*})\le \frac{1+\sqrt{5}}{2}\zeta_{\dist(x_t,\tilde{x}_t^{*})}\dist(x_t,\tilde{x}_t^{*}) \text{ and  } \dist(x_t^{\tau},x_{t+1}^{*})\le \frac{1+\sqrt{5}}{2}\zeta_{\dist(x_t,x_{t+1}^{*})}\dist(x_t,x_{t+1}^{*})
 \end{equation}
for all $\tau\ge 0$.
 Further, we have after $\tau$ iterations that
 \begin{equation*}
   \Ftt(\tilde{x}_t) -\Ftt(\tilde{x}_t^{*})\le \exp\left(\frac{-\tau}{\zeta_{\bar{D}}}\right)\frac{(\L +\zeta_{\bar{D}}/\eta)\dist(x_t,\tilde{x}_t^{*})^2}{2}
 \end{equation*}
 and
 \begin{equation*}
   \Ft(x_{t+1}) -\Ft(\tilde{x}_t^{*})\le \exp\left(\frac{-\tau}{\zeta_{\bar{D}}}\right)\frac{(\L +\zeta_{\bar{D}}/\eta)\dist(x_t,x_{t+1}^{*})^2}{2}.
 \end{equation*}
 where
 \[\bar{D}=\frac{1+\sqrt{5}}{2} \max\{\zeta_{\dist(x_t,\tilde{x}_t^{*})}\dist(x_t,\tilde{x}_t^{*}),\zeta_{\dist(x_t,x_{t+1}^{*})}\dist(x_t,x_{t+1}^{*})\}.
 \]
 We go on to bound $\bar{D}$.
Note that
\begin{equation}\label{eq:step-dist1}
 \dist(x_t,\tilde{x}_t^{*})\circled{1}[\le] \dist(x_t,x^{*})+\dist(x^{*},\tilde{x}_t^{*})\circled{2}[\le] \frac{13}{2}\R\le 8\R,
\end{equation}
and
\begin{equation}\label{eq:step-dist2}
 \dist(x_t,x_{t+1}^{*})\circled{1}[\le] \dist(x_t,x^{*})+\dist(x^{*},x_{t+1}^{*})\circled{2}[\le] 2\left(1+\frac{41}{16}\right)\R\le 8\R
\end{equation}
where for both inequalities, $\circled{1}$ holds by the triangle inequality and $\circled{2}$ holds by \cref{prop:xy-tilde-bound}.
Further,
\begin{equation*}
  \dist(\tilde{x}_t^{\tau},x^{*})\circled{1}[\le]\dist(\tilde{x}_t^{\tau},\tilde{x}_t^{*})+\dist(x^{*},\tilde{x}_t^{*})\circled{2}[\le]\R \frac{1+\sqrt{5}}{2}\left(8\zeta_{8\R}+ \frac{9}{2}\right),
\end{equation*}
and
\begin{equation*}
  \dist(x_t^{\tau},x^{*})\circled{1}[\le]\dist(x_t^{\tau},x_{t+1}^{*})+\dist(x^{*},x_{t+1}^{*})\circled{2}[\le]\R\frac{1+\sqrt{5}}{2}\left(8\zeta_{8\R}+ \frac{41}{8}\right),
\end{equation*}
where for both inequalities, $\circled{1}$ holds by the triangle inequality and $\circled{2}$ holds by applying \cref{eq:step-dist1,eq:step-dist1} to \cref{eq:rioda-rgd-bound}.
It follows that $\bar{D}= \R\left(13\zeta_{8\R}+ 9\right)$ and hence $\zeta_{\bar{D}}=\bigol{\zetar ^2}$.
The complexity of implementing the criterion is $\tau=\bigol{\zeta^{2}_{\R}\log(\L \zeta^2_{\R}/\varepsilon_t)}$.

We show that it is possible to implement without the knowledge of $\R$. We can address the first obstacle as in the \CRGD{} implementation by noting that choosing $\varepsilon_t= \L \min\left\{1/8, \left(\Delta_t^{-1}(32+327 \dist(x_t,\tilde{x}^{\tau}_t)^2\vert \kappamin\vert )  \right)^{-1} \right\}$ is sufficient to ensure $C_t^x+5\bar{\epsilon}_t\le 1/8$.
But the number of iterations still depends on $\R$ via $\zetar $. To circumvent this, assume $\norm{\nabla\Ftt(\tilde{x}_t)}^{2}\le \tilde{\varepsilon}_t\defi \frac{\varepsilon_t\dist(x_t,\tilde{x}_t)^2}{\eta+2\eta^2\varepsilon_t}$. We show this suffices to satisfy the criterion.
  We have
  \begin{equation}\label{eq:Ftt-sc-grad-to-gap}
    \begin{aligned}
    \Ftt\left(\tilde{x}_t^{*}\right)&\circled{1}[\ge]     \Ftt(\tilde{x}_t)+\innp{\nabla \Ftt(\tilde{x}_t),\exponinv{\tilde{x}_t}(\tilde{x}_t^{*})}+ \frac{1}{2\eta}\dist(\tilde{x}_t,\tilde{x}_t^{*})^2\\
    &\ge\Ftt(\tilde{x}_t)+\min_{z\in \M}\left[\innp{\nabla \Ftt(\tilde{x}_t),\exponinv{\tilde{x}_t}(z)}+ \frac{1}{2\eta}\dist(\tilde{x}_t,z)^2\right]\\
      &\circled{2}[\ge]\Ftt(\tilde{x}_t)- \frac{\eta}{2}\norm{\nabla \Ftt(\tilde{x}_t)}^2
    \end{aligned}
  \end{equation}
  where $\circled{1}$ holds by the $(1/\eta)$-strong g-convexity of $\Ftt$, $\circled{2}$ by noting that
  \begin{equation*}
\argmin_{z\in \M}\left\{ \innp{\nabla \Ftt(\tilde{x}_t),\exponinv{\tilde{x}_t}(z)}+ \frac{1}{2\eta}\dist(\tilde{x}_t,z)^2 \right\}= \expon{\tilde{x}_t}\left(\argmin_{v\in T_{\tilde{x}_t}\M}\norm{-\eta\nabla\Ftt(\tilde{x}_t)-v}_{\tilde{x}_t}^2\right)= \expon{\tilde{x}_t}(-\eta\nabla\Ftt(\tilde{x}_t)).
\end{equation*}
We have
\begin{equation}
  \label{eq:rioda-rgd-eps-tilde-err}
 \frac{1}{2\eta}\dist(\tilde{x}_t,\tilde{x}_t^{*})^2\circled{1}[\le] \Ftt(\tilde{x}_t)-\Ftt(\tilde{x}^{*}_t)\circled{2}[\le] \frac{\eta}{2}\norm{\nabla \Ftt(\tilde{x}_t)}^2  \circled{3}[\le]\eta\tilde{\varepsilon}_t/2,
\end{equation}
where $\circled{1}$ holds by the $(1/\eta)$-strong g-convexity of $\Ftt$, $\circled{2}$ holds by \cref{eq:Ftt-sc-grad-to-gap} and $\circled{3}$ holds by assumption.
Hence, in order to satisfy the criterion, i.e., $\Ftt(\tilde{x}_t)-\Ftt(\tilde{x}^{*}_t)\le\varepsilon_t\dist(x_t,\tilde{x}^{*}_t)^2$, we require
\begin{equation*}
 \eta\tilde{\varepsilon}_t/2 = \frac{\varepsilon_t\dist(x_t,\tilde{x}_t)^2}{2+4\eta\varepsilon_t}\le\varepsilon_t\dist(x_t,\tilde{x}^{*}_t)^2.
\end{equation*}
Note that
\begin{equation}\label{eq:rioda-rgd-inexact-to-exact}
  \begin{aligned}
  \dist(x_t,\tilde{x}_t)^2&\circled{1}[\le] 2\dist(x_t,\tilde{x}^{*}_t)^2+2\dist(\tilde{x}_t,\tilde{x}^{*}_t)^2\circled{2}[\le] 2\dist(x_t,\tilde{x}^{*}_t)^2+\frac{2\eta\varepsilon_t\dist(x_t,\tilde{x}_t)^2}{1+2\eta\varepsilon_t}\\
  \Leftrightarrow   \dist(x_t,\tilde{x}_t)^2&\circled{3}[\le] 2(1+\eta \varepsilon_t)\dist(x_t,\tilde{x}^{*}_t)^2,
  \end{aligned}
\end{equation}
where $\circled{1}$ holds by the triangle inequality, $\circled{2}$ by \cref{eq:rioda-rgd-eps-tilde-err} and by definition of $\tilde{\varepsilon}_t$ and $\circled{3}$ holds by rearranging.
Hence we conclude that
\begin{equation*}
  \eta\tilde{\varepsilon}_t/2 = \frac{\varepsilon_t\dist(x_t,\tilde{x}_t)^2}{2+4\eta\varepsilon_t}\circled{1}[\le] \frac{2(1+\eta \varepsilon_t)\varepsilon_t\dist(x_t,\tilde{x}^{*}_t)^2}{2+4\eta\varepsilon_t}\le \varepsilon_t\dist(x_t,\tilde{x}^{*}_t)^2,
\end{equation*}
where $\circled{1}$ holds by \cref{eq:rioda-rgd-inexact-to-exact}.
\end{proof}

\section{Technical Results}
\label{sec:technical-results}

\begin{proposition}\label{prop:xy-tilde-bound}
  Let $\M$, $\N$ be Hadamard manifolds with sectional curvature in $[\kappamin, 0]$.
  Consider the bi-function $f:\M\times\N \to \mathbb{R}$, which is CC and $\L $-smooth in $\X \times\Y $, where $\X \defi \ball(x^{*},\bar{D})$, $\Y \defi \ball(y^{*},\bar{D})$ where $(x^{*},y^{*})$ is a saddle point of $f$ and $\bar{D}\defi 3(\dist(x_t,x^{*})+\dist(y_t,y^{*}))$. Further, let $\tilde{x}_t^{*}$, $\tilde{y}_t^{*}$, $x_{t+1}^{*}$ and $y_{t+1}^{*}$ be defined as in \cref{alg:rioda} for the unconstrained case and $\eta\le 1/4L$.
  Then, we have that
  \begin{equation*}
    \dist(\tilde{x}_t^{*},x^{*})+\dist(\tilde{y}_t^{*},y^{*})\le \frac{9}{4} (\dist(y_t,y^{*}) + \dist(x_t,x^{*})).
  \end{equation*}
  and
  \begin{equation*}
    \dist(x_{t+1}^{*},x^{*})+\dist(y_{t+1}^{*},y^{*})\le \frac{41}{16} (\dist(y_t,y^{*}) + \dist(x_t,x^{*})).
  \end{equation*}
\end{proposition}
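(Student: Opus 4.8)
The plan is to bound each of the four exact proximal-type solutions against the saddle point one variable at a time, using the first-order optimality of the corresponding subproblem, the $\L$-smoothness of $f$, and the fact that $\nabla_x f(x^{*},y^{*})=0$ and $\nabla_y f(x^{*},y^{*})=0$. The latter holds because $x^{*}$ and $y^{*}$ are the centres of $\X=\ball(x^{*},\bar D)$ and $\Y=\ball(y^{*},\bar D)$, hence an interior minimizer of $f(\cdot,y^{*})$ and an interior maximizer of $f(x^{*},\cdot)$ on those balls.

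First I would treat $\tilde x_t^{*}=\argmin_{x\in\X}\{f(x,y_t)+\tfrac{1}{2\eta}\dist(x,x_t)^2\}$. Since $f(\cdot,y_t)$ is g-convex and $\tfrac{1}{2\eta}\dist(\cdot,x_t)^2$ is $(1/\eta)$-strongly g-convex on the Hadamard ball $\X$ (\cref{fact:cosine_law_riemannian}), this minimizer is unique and satisfies the variational inequality $\innp{\nabla_x f(\tilde x_t^{*},y_t)-\tfrac1\eta\exponinv{\tilde x_t^{*}}(x_t),\,\exponinv{\tilde x_t^{*}}(z)}\ge 0$ for all $z\in\X$. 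Taking $z=x^{*}$, bounding the gradient term by Cauchy--Schwarz, lower-bounding $\innp{\exponinv{\tilde x_t^{*}}(x_t),\exponinv{\tilde x_t^{*}}(x^{*})}$ by $\tfrac12\big(\dist(\tilde x_t^{*},x_t)^2+\dist(\tilde x_t^{*},x^{*})^2-\dist(x_t,x^{*})^2\big)$ via the Hadamard cosine inequality (\cref{fact:cosine_law_riemannian}), estimating $\norm{\nabla_x f(\tilde x_t^{*},y_t)}\le\L\big(\dist(\tilde x_t^{*},x^{*})+\dist(y_t,y^{*})\big)$ from the two partial Lipschitz conditions in the definition of $\L$-smoothness together with $\nabla_x f(x^{*},y^{*})=0$, and using $\dist(\tilde x_t^{*},x_t)\ge\dist(\tilde x_t^{*},x^{*})-\dist(x_t,x^{*})$, one rearranges (using $\eta\L\le 1/4$) to get $\dist(\tilde x_t^{*},x^{*})\le\tfrac43\dist(x_t,x^{*})+\tfrac13\dist(y_t,y^{*})$. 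The symmetric argument for $\tilde y_t^{*}=\argmax_{y\in\Y}\{f(x_t,y)-\tfrac1{2\eta}\dist(y,y_t)^2\}$, carried out with the g-convex function $-f(x_t,\cdot)$, gives $\dist(\tilde y_t^{*},y^{*})\le\tfrac43\dist(y_t,y^{*})+\tfrac13\dist(x_t,x^{*})$; adding these two bounds yields $\dist(\tilde x_t^{*},x^{*})+\dist(\tilde y_t^{*},y^{*})\le\tfrac53\big(\dist(x_t,x^{*})+\dist(y_t,y^{*})\big)\le\tfrac94\big(\dist(x_t,x^{*})+\dist(y_t,y^{*})\big)$, which is the first claim.

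For the second claim I would run the identical estimate on $x_{t+1}^{*}=\argmin_{x\in\X}\{f(x,\tilde y_t)+\tfrac1{2\eta}\dist(x,x_t)^2\}$ and its symmetric counterpart, obtaining $\dist(x_{t+1}^{*},x^{*})\le\tfrac43\dist(x_t,x^{*})+\tfrac13\dist(\tilde y_t,y^{*})$ and $\dist(y_{t+1}^{*},y^{*})\le\tfrac43\dist(y_t,y^{*})+\tfrac13\dist(\tilde x_t,x^{*})$. The only new ingredient is a bound on the inexact points $\tilde x_t,\tilde y_t$: the error criterion of \cref{alg:rioda} gives $\dist(\tilde y_t,\tilde y_t^{*})\le\sqrt{\bar{\epsilon}_t}\,\dist(y_t,\tilde y_t^{*})$ with $\sqrt{\bar{\epsilon}_t}\le 1/4$, so combining the triangle inequality with the already-proved bound on $\dist(\tilde y_t^{*},y^{*})$ gives $\dist(\tilde y_t,y^{*})\le\tfrac{23}{12}\dist(y_t,y^{*})+\tfrac5{12}\dist(x_t,x^{*})$ (and symmetrically for $\tilde x_t$). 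Substituting and adding the $x$- and $y$-bounds produces $\dist(x_{t+1}^{*},x^{*})+\dist(y_{t+1}^{*},y^{*})\le\tfrac{19}{9}\big(\dist(x_t,x^{*})+\dist(y_t,y^{*})\big)\le\tfrac{41}{16}\big(\dist(x_t,x^{*})+\dist(y_t,y^{*})\big)$.

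The main obstacle is the bookkeeping rather than any single inequality: every point entering a cosine or a smoothness inequality must lie in the ball where $f$ is assumed CC and $\L$-smooth, which is precisely why $\bar D=3(\dist(x_t,x^{*})+\dist(y_t,y^{*}))$ is chosen so generously --- the outputs sit within distance $\tfrac94$, resp.\ $\tfrac{41}{16}$, times $\dist(x_t,x^{*})+\dist(y_t,y^{*})$ of $x^{*}$, resp.\ $y^{*}$, comfortably inside $\bar D$ --- and one must also note in passing that these strongly g-convex subproblems have interior minimizers, so the variational inequality (and its gradient form $\nabla_x f(\tilde x_t^{*},y_t)=\tfrac1\eta\exponinv{\tilde x_t^{*}}(x_t)$) is legitimate. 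The constants $\tfrac94$ and $\tfrac{41}{16}$ are deliberately loose; any argument of this shape with $\eta\L\le 1/4$ and $\sqrt{\bar{\epsilon}_t}\le 1/4$ yields admissible bounds.
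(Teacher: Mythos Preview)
Your proof is correct and is a genuinely different---and more self-contained---argument than the paper's. The paper invokes two external lemmas: first, a proximal non-expansiveness result (\cite[Lemma~10]{martinez2023acceleratedmin}) to obtain $\dist(x_t,x^+(y^{*}))\le\dist(x_t,x^{*})$, and second, a Lipschitz property of the best-response map (\cite[Lemma~40]{martinez2023acceleratedminmax}) to conclude that $y\mapsto x^+(y)$ is $(\L\eta)$-Lipschitz; combining these with triangle inequalities yields the factor $2+\L\eta\le 9/4$ for the first claim, and then a second pass with $\tilde y_t$ in place of $y_t$ gives $2+\tfrac{9}{4}\L\eta\le 41/16$. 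By contrast, you go directly through the variational inequality at $\tilde x_t^{*}$, the Hadamard cosine law, and the smoothness bound $\norm{\nabla_x f(\tilde x_t^{*},y_t)}\le\L(\dist(\tilde x_t^{*},x^{*})+\dist(y_t,y^{*}))$, arriving at the tighter intermediate constants $5/3$ and $19/9$ before relaxing to the stated ones.

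Your route avoids importing the two cited lemmas and is arguably more elementary, at the cost of a small algebraic computation. The paper's route is more modular and makes the two mechanisms (prox contracts toward the minimizer; the prox center moves Lipschitzly in the other variable) transparent. Both handle the bookkeeping about points lying in $\X\times\Y$ the same way: work with the constrained minimizer, derive the distance bound, observe it sits strictly inside the ball of radius $\bar D$, and conclude a posteriori that the unconstrained and constrained minimizers coincide.
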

\begin{proof}
  Let $H_t(x,y)\defi f(x,y) + \frac{1}{2\eta}\dist(x,x_t)^2-\frac{1}{2\eta}\dist(y,y_t)^2$,  $y^{+}(x)\defi  \argmin_{y\in\Y } H_t(x,y)$, $x^{+}(y)\defi  \argmin_{x\in\X } H_t(x,y)$, $x^+(y^{*})=\min_{x\in\X}f(x,y^{*})+\frac{1}{2\eta}\dist(x,x_t)^2$ and $y^+(x^{*})=\max_{y\in\Y}f(x^{*},y)-\frac{1}{2\eta}\dist(y,y_t)^{2}$. Further, we have $x^{*}=\argmin_{x\in\X}f(x,y^{*})$ and $y^{*}=\argmax_{y\in\Y}f(x^{*},y)$. Note that $(x^{*},y^{*})$, $(x_t,y_t)$, $(x^+(y^{*}),y^+(x^{*}))$ and $(x^+(y_t),y^+(x_t))$ lie in $\X \times\Y $ by definition. Using \citep[Lemma 10]{martinez2023acceleratedmin}, we obtain that
  \begin{equation}
    \label{eq:xy-tilde-prox}
  \dist(x_t,x^+(y^{*}))\le \dist(x_t,x^{*}), \quad \dist(y_t,y^+(x^{*}))\le \dist(y_t,y^{*}).
  \end{equation}
  Note that $H_t$ is $(1/\eta)$-SCSC in $\X \times\Y $ and $\nabla_x H_t(x,\cdot)$ and $\nabla_y H_t(\cdot,y)$ are $\L $-Lipschitz for all $x\in \X $ and $y\in \Y $, respectively.
  Hence, applying \citep[Lemma 40]{martinez2023acceleratedminmax} to $H_t$, we have that $x^{+}(y)$ and $y^{+}(x)$ are $(\L \eta)$-Lipschitz for all $x\in \X $ and $y\in \Y $, respectively and in particular, we have
    \begin{align}
      \dist(x^{+}(y^{*}),x^+(y_t))\le \L \eta \dist(y_t,y^{*}), \quad \dist(y^{+}(x^{*}),y^+(x_t))\le \L \eta \dist(x_t,x^{*})\label{eq:xy-tilde-cross-smoothness}\\
      \dist(x^{+}(y^{*}),x^+(\tilde{y}_t))\le \L \eta \dist(\tilde{y}_t,y^{*}), \quad \dist(y^{+}(x^{*}),y^+(\tilde{x}_t))\le \L \eta \dist(\tilde{x}_t,x^{*}).\label{eq:xy-p1-cross-smoothness}
    \end{align}
It follows that,
  \begin{equation}
\label{eq:xy-tilde-bound}
    \begin{aligned}
    \dist(x^{*},x^+(y_t))+\dist(y^{*},y^+(x_t))&\circled{1}[\le] \dist(x^{*},x^+(y^{*}))+\dist(y^{*},y^+(x^{*}))+\dist(x^+(y^{*}),x^+(y_t))+\dist(y^+(x^{*}),y^+(x_t))\\
    &\circled{2}[\le] \dist(x_t,x^+(y^{*}))+\dist(y_t,y^+(x^{*}))+(1+\L \eta) (\dist(y_t,y^{*}) + \dist(x_t,x^{*}))\\
    &\circled{3}[\le] (2+\L \eta) (\dist(y_t,y^{*}) + \dist(x_t,x^{*}))\circled{4}[\le] \frac{9}{4} (\dist(y_t,y^{*}) + \dist(x_t,x^{*}))
    \end{aligned}
\end{equation}
where $\circled{1}$ follows by the triangle inequality, $\circled{2}$ follows by the triangle inequality and \cref{eq:xy-tilde-cross-smoothness}, $\circled{3}$ follows by \cref{eq:xy-tilde-prox} and $\circled{4}$ by definition of $\eta$.
Note that by \cref{eq:xy-tilde-bound}, we have that $x^+(y_t)$ and $y^+(x_t)$ lie in the interior of $\X $ and $\Y $ respectively and hence the constraints $\ball(x^{*},\bar{D})$ and $\ball(y^{*},\bar{D})$ are inactive.
Recall that $\tilde{x}_t^{*}= \argmin_{x\in\M} f(x,y_t)+\frac{1}{2\eta}\dist(x,x_t)^2$ and $\tilde{y}_t^{*}= \argmax_{y\in\N} f(x_t,y)-\frac{1}{2\eta}\dist(y,y_t)^2$, hence we have that $\tilde{x}_t^{*}=x^{+}(y_t)$, $\tilde{y}_t^{*}=y^{+}(x_t)$.
Further, we have that
  \begin{equation}
\label{eq:xy-p1-bound}
    \begin{aligned}
    \dist(x^{*},x^+(\tilde{y}_t))+\dist(y^{*},y^+(\tilde{x}_t))&\circled{1}[\le] \dist(x^{*},x^+(y^{*}))+\dist(y^{*},y^+(x^{*}))+\dist(x^+(y^{*}),x^+(\tilde{y}_t))+\dist(y^+(x^{*}),y^+(\tilde{x}_t))\\
    &\circled{2}[\le] \dist(x_t,x^+(y^{*}))+\dist(y_t,y^+(x^{*}))+\dist(y_t,y^{*}) + \dist(x_t,x^{*})+ \L \eta(\dist(\tilde{y}_t,y^{*}) + \dist(\tilde{x}_t,x^{*}))\\
    &\circled{3}[\le] (2+\frac{9L\eta}{4}) (\dist(y_t,y^{*}) + \dist(x_t,x^{*}))\circled{4}[\le] \frac{41}{16} (\dist(y_t,y^{*}) + \dist(x_t,x^{*}))
    \end{aligned}
\end{equation}
where $\circled{1}$ follows by the triangle inequality, $\circled{2}$ follows by the triangle inequality and \cref{eq:xy-p1-cross-smoothness}, $\circled{3}$ follows by \cref{eq:xy-tilde-prox} and \cref{eq:xy-tilde-bound} with $\tilde{x}_t^{*}=x^{+}(y_t)$, $\tilde{y}_t^{*}=y^{+}(x_t)$ and $\circled{4}$ by definition of $\eta$.
Note that by \cref{eq:xy-p1-bound}, we have that $x^+(\tilde{y}_t)$ and $y^+(\tilde{x}_t)$ lie in the interior of $\X $ and $\Y $ respectively and hence the constraints $\ball(x^{*},\bar{D})$ and $\ball(y^{*},\bar{D})$ are inactive.
Recall that $x_{t+1}^{*}= \argmin_{x\in\M} f(x,\tilde{y}_t)+\frac{1}{2\eta}\dist(x,x_t)^2$ and $y_{t+1}^{*}= \argmax_{y\in\N} f(\tilde{x}_t,y)-\frac{1}{2\eta}\dist(y,\tilde{y}_t)^2$, hence we have that $x_{t+1}^{*}=x^{+}(\tilde{y}_t)$, $y_{t+1}^{*}=y^{+}(\tilde{x}_t)$.
  
\end{proof}

\begin{proposition}\label{prop:bound-c}
For $c>1$, and $T\in \mathbb{N}_{0}$ we have that
\begin{equation*}
  \prod_{t=0}^T \frac{1}{1-(t+c)^{-2}} = \frac{c(c+T)}{(c-1)(c+T+1)}  \leq \frac{ c}{c-1}.
\end{equation*}
\end{proposition}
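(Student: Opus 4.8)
The plan is to recognize the product as telescoping after a difference-of-squares factorization. First I would write, for each $t$,
\[
1-(t+c)^{-2}=\frac{(t+c)^2-1}{(t+c)^2}=\frac{(t+c-1)(t+c+1)}{(t+c)^2},
\]
so that
\[
\frac{1}{1-(t+c)^{-2}}=\frac{(t+c)^2}{(t+c-1)(t+c+1)}=\frac{t+c}{t+c-1}\cdot\frac{t+c}{t+c+1}.
\]
Then I would split the product over $t=0,\dots,T$ into two separate telescoping products: $\prod_{t=0}^{T}\frac{t+c}{t+c-1}=\frac{T+c}{c-1}$ (numerators and denominators cancel, leaving the last numerator $T+c$ over the first denominator $c-1$), and $\prod_{t=0}^{T}\frac{t+c}{t+c+1}=\frac{c}{T+c+1}$ (leaving the first numerator $c$ over the last denominator $T+c+1$).

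Multiplying these gives $\prod_{t=0}^{T}\frac{1}{1-(t+c)^{-2}}=\frac{c(c+T)}{(c-1)(c+T+1)}$, which is the claimed closed form. For the inequality, since $c>1$ and $T\ge 0$ we have $0<c+T<c+T+1$, hence $\frac{c+T}{c+T+1}<1$, and therefore $\frac{c(c+T)}{(c-1)(c+T+1)}\le\frac{c}{c-1}$.

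There is no real obstacle here; the argument is a two-line telescoping computation. The only point requiring a little care is bookkeeping of the boundary terms in the two telescoping products — making sure the surviving factors are exactly $\frac{T+c}{c-1}$ and $\frac{c}{T+c+1}$ and that all denominators are nonzero, which holds because $c>1$ guarantees $t+c-1\ge c-1>0$ for all $t\ge 0$. One could alternatively verify the closed form by induction on $T$, but the telescoping derivation is cleaner and also explains where the formula comes from.
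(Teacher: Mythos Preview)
Your proof is correct. The paper proves the identity differently: it verifies the closed form by induction on $T$, checking the base case $T=0$ and then multiplying the hypothesized formula for $T-1$ by the next factor $\frac{1}{1-(T+c)^{-2}}$ and simplifying. Your telescoping argument via the difference-of-squares factorization is a genuinely different route. It has the advantage of being constructive --- it derives the closed form rather than merely verifying a formula handed down from the statement --- and it makes clear why the product collapses. The induction, on the other hand, is marginally shorter once the target formula is already given. You even anticipated this alternative in your last paragraph. Both approaches handle the final inequality the same way, by observing $\frac{c+T}{c+T+1}<1$.
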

\begin{proof}
We show $\prod_{t=0}^T \frac{1}{1-(t+c)^{-2}} = \frac{c(c+T)}{(c-1)(c+T+1)}$ by induction.
The statement holds for $T=0$.
Now assume that the statement holds for $T-1$.
Then the statement also holds for $T$, which can be shown by noting that $\circled{1}$ below holds by the induction hypothesis and rearranging
  \begin{equation*}
    \prod_{t=0}^T \frac{1}{1-(t+c)^{-2}} \circled{1}[=]  \frac{c(c+T-1)}{(c-1)(c+T)}\frac{1}{1-(T+c)^{-2}}= \frac{c(c+T)}{(c-1)(c+T+1)}  \leq \frac{ c}{c-1}.
  \end{equation*}
\end{proof}
Note that in the two following proposition, we specify \emph{where} we require the smoothness and g-convexity to hold, which is important for the analysis in the paper.
\begin{proposition}\label{prop:gap-to-grad}
  Let $\mathcal{M}$ be a Riemannian manifold and let $f:\M\to \mathbb{R}$ be g-convex and $\L $-smooth in $\X=\ball(x^{*},2\dist(\bar{x},x^{*}))$, where $x^{*}\in \argmin_{x\in \M}f(x)$ and $\bar{x}\in \M$. Further, let $x^+\defi \exp(-\frac{1}{\L }\nabla f(\bar{x}))$, then it is
  \begin{equation*}
   \frac{1}{2L}\norm{\nabla f(\bar{x})}^{2}\le f(\bar{x})-f(x^+)\le f(\bar{x})-f(x^{*}).
  \end{equation*}
  \begin{proof}
    First, note that
    \begin{equation}\label{eq:xp-in-X}
      \dist(x^+,x^{*})\circled{1}[\le] \dist(x^+,\bar{x})+\dist(\bar{x},x^{*})\circled{2}[\le] \frac{1}{\L }\norm{\nabla f(\bar{x})-\nabla f(x^{*})}+\dist(\bar{x},x^{*})\circled{3}[\le] 2\dist(\bar{x},x^{*})
    \end{equation}
    where $\circled{1}$ holds by the triangle inequality, $\circled{2}$ by the update rule of $x^+$ and by $\nabla f(x^{*})=0$, $\circled{3}$ holds by the $\L $-smoothness of $f$ in $\X$. This implies that $x^+\in \X$.
    Then we have,
    \begin{equation*}
f(x^+)-f(\bar{x})\circled{1}[\le] \innp{\nabla f(\bar{x}), \exponinv{\bar{x}}(x^+)}+ \frac{\L }{2}\dist(\bar{x},x^+)^2\circled{2}[\le]  -\frac{1}{2L}\norm{\nabla f(\bar{x})}^2,
\end{equation*}
    where $\circled{1}$ holds by $\L $-smoothness of $f$ in $\X$ and since $x^+\in \X$ by \cref{eq:xp-in-X}, $\circled{2}$ by the update rule of $x^+$.
    Finally, by definition of $x^{*}$, we have that $f(x^{*})\le f(x^+)$ and it follows that $f(\bar{x})-f(x^+)\le f(\bar{x})-f(x^{*})$.
  \end{proof}

\end{proposition}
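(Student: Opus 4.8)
The plan is to run the standard descent-lemma argument for a single Riemannian gradient step, with the one extra care point that the $\L$-smoothness hypothesis is only assumed on the ball $\X = \ball(x^{*}, 2\dist(\bar{x}, x^{*}))$, so I first have to check that the step point $x^{+}$ stays inside $\X$.

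First I would locate $x^{+}$. By the triangle inequality, $\dist(x^{+}, x^{*}) \le \dist(x^{+}, \bar{x}) + \dist(\bar{x}, x^{*})$. The first summand equals $\L^{-1}\norm{\nabla f(\bar{x})}$ by the definition of the step $x^{+} = \expon{\bar{x}}(-\L^{-1}\nabla f(\bar{x}))$, and since $x^{*}$ is a global minimizer on all of $\M$ we have $\nabla f(x^{*}) = 0$, so this is $\L^{-1}\norm{\nabla f(\bar{x}) - \nabla f(x^{*})}$. Now $\bar{x} \in \X$ (trivially, as $\dist(\bar{x}, x^{*}) \le 2\dist(\bar{x}, x^{*})$) and $x^{*} \in \X$, and on a Hadamard manifold the metric ball $\X$ is g-convex, so the $\L$-smoothness of $f$ on $\X$ yields $\norm{\nabla f(\bar{x}) - \nabla f(x^{*})} \le \L \dist(\bar{x}, x^{*})$; hence $\dist(x^{+}, \bar{x}) \le \dist(\bar{x}, x^{*})$ and therefore $\dist(x^{+}, x^{*}) \le 2\dist(\bar{x}, x^{*})$, i.e. $x^{+} \in \X$.

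Next, since both $\bar{x}$ and $x^{+}$ lie in $\X$, I apply the $\L$-smoothness upper bound from \cref{sec:preliminaries} between them: $f(x^{+}) - f(\bar{x}) \le \innp{\nabla f(\bar{x}), \exponinv{\bar{x}}(x^{+})} + \frac{\L}{2}\dist(\bar{x}, x^{+})^2$. Plugging in $\exponinv{\bar{x}}(x^{+}) = -\L^{-1}\nabla f(\bar{x})$ (and hence $\dist(\bar{x}, x^{+}) = \L^{-1}\norm{\nabla f(\bar{x})}$ by Gauss's lemma) collapses the right-hand side to $-\L^{-1}\norm{\nabla f(\bar{x})}^2 + \frac{1}{2}\L^{-1}\norm{\nabla f(\bar{x})}^2 = -\frac{1}{2\L}\norm{\nabla f(\bar{x})}^2$, which is the left inequality of the claim. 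Finally, global optimality of $x^{*}$ gives $f(x^{*}) \le f(x^{+})$, hence $f(\bar{x}) - f(x^{+}) \le f(\bar{x}) - f(x^{*})$, the right inequality.

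The only step that requires any thought is the first one: keeping $x^{+}$ inside the region where smoothness is assumed. The radius $2\dist(\bar{x}, x^{*})$ in the hypothesis is chosen precisely so that the triangle-inequality estimate $\dist(x^{+}, x^{*}) \le \dist(\bar{x}, x^{*}) + \L^{-1}\norm{\nabla f(\bar{x}) - \nabla f(x^{*})} \le 2\dist(\bar{x}, x^{*})$ closes; everything else is the classical Euclidean gradient-descent argument carried over verbatim to the geodesic setting.
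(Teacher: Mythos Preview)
Your proposal is correct and follows essentially the same approach as the paper's proof: first use the triangle inequality together with $\nabla f(x^{*})=0$ and the gradient-Lipschitz consequence of $\L$-smoothness to show $x^{+}\in\X$, then apply the smoothness upper bound between $\bar{x}$ and $x^{+}$ and substitute the explicit step, and finally invoke optimality of $x^{*}$. The only cosmetic difference is your aside about Hadamard manifolds and g-convexity of the ball, which is not actually needed here (and the proposition is stated for general Riemannian manifolds); the paper simply uses that $\bar{x},x^{*},x^{+}\in\X$ and that smoothness holds between pairs of points in $\X$.
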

\begin{proposition}\label{prop:sc-dist-to-grad}
  Let $\mathcal{M}$ be a Riemannian manifold and $\X\subset \M$ be a closed and g-convex set and let $f:\M\to \mathbb{R}$ be $\mu$-strongly g-convex and differentiable in $\X$. Then, it holds for all $x,y\in \X$ that
  \begin{equation*}
    \mu \dist(x,y)^2\le \innp{\nabla f(x)-\Gamma{y}{x}\nabla f(y),-\exponinv{x}(y)}.
  \end{equation*}
  In particular, it follows that
  \begin{equation*}
   \mu \dist(x,y)\le \norm{\nabla f(x)-\Gamma{y}{x}f(y)}.
  \end{equation*}
\end{proposition}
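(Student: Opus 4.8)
The plan is to derive the first inequality directly from the defining inequality of $\mu$-strong g-convexity stated in \cref{sec:preliminaries}, applied twice: once to the ordered pair $(x,y)$ and once to $(y,x)$. This gives $\frac{\mu}{2}\dist(x,y)^2 \le f(y) - f(x) - \innp{\nabla f(x), \exponinv{x}(y)}$ and $\frac{\mu}{2}\dist(x,y)^2 \le f(x) - f(y) - \innp{\nabla f(y), \exponinv{y}(x)}$. Adding the two makes the function values cancel, leaving $\mu\dist(x,y)^2 \le -\innp{\nabla f(x), \exponinv{x}(y)} - \innp{\nabla f(y), \exponinv{y}(x)}$.

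Next I would rewrite the term $\innp{\nabla f(y), \exponinv{y}(x)}$, which lives in $\Tansp{y}\M$, as a quantity in $\Tansp{x}\M$. The two geometric facts I need are: (i) the geodesic from $y$ to $x$ is the time-reversal of the one from $x$ to $y$, so that $\Gamma{y}{x}\exponinv{y}(x) = -\exponinv{x}(y)$; and (ii) parallel transport is a linear isometry between tangent spaces. Combining them, $\innp{\nabla f(y), \exponinv{y}(x)} = \innp{\Gamma{y}{x}\nabla f(y), \Gamma{y}{x}\exponinv{y}(x)} = -\innp{\Gamma{y}{x}\nabla f(y), \exponinv{x}(y)}$. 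Substituting this back collapses the right-hand side to $\innp{\nabla f(x) - \Gamma{y}{x}\nabla f(y), -\exponinv{x}(y)}$, which is the first claim. The one point needing care is that on a manifold that is only g-convex on $\X$ (not necessarily globally uniquely geodesic), the maps $\exponinv{x}(\cdot)$, $\exponinv{y}(\cdot)$, and the transport $\Gamma{y}{x}$ must all be taken along the same geodesic segment in $\X$ that is used in the two strong-convexity inequalities; with that consistent choice the reversal identity in (i) holds.

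The second inequality then follows immediately: apply Cauchy–Schwarz to the inner product on the right, $\innp{\nabla f(x) - \Gamma{y}{x}\nabla f(y), -\exponinv{x}(y)} \le \norm{\nabla f(x) - \Gamma{y}{x}\nabla f(y)}\,\norm{\exponinv{x}(y)} = \norm{\nabla f(x) - \Gamma{y}{x}\nabla f(y)}\,\dist(x,y)$, chain it with the first inequality, and cancel one factor of $\dist(x,y)$ (the case $x=y$ being trivial).

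I do not expect a genuine obstacle here: the whole argument is a handful of standard manipulations. The only step requiring a moment's attention is the bookkeeping around the reversal identity $\Gamma{y}{x}\exponinv{y}(x) = -\exponinv{x}(y)$ together with ensuring the two invocations of the strong-convexity inequality are along compatible geodesics; beyond that the proof is essentially a three-line computation.
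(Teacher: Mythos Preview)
Your proof is correct and follows essentially the same approach as the paper: apply the strong g-convexity inequality in both directions, add, and then use Cauchy--Schwarz followed by division by $\dist(x,y)$. If anything, you are more explicit than the paper about the parallel-transport bookkeeping (the isometry and the reversal identity $\Gamma{y}{x}\exponinv{y}(x)=-\exponinv{x}(y)$), which the paper leaves implicit when passing from the sum of the two inner products to the single expression $\innp{\nabla f(x)-\Gamma{y}{x}\nabla f(y),-\exponinv{x}(y)}$.
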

\begin{proof}
  By $\mu$-strong g-convexity of $f$, we have
  \begin{align*}
   f(x)-f(y)&\le \innp{\nabla f(x),-\exponinv{x}(y)} - \frac{\mu}{2}\dist(x,y)^2\\
   f(y)-f(x)&\le \innp{\nabla f(y),-\exponinv{y}(x)} - \frac{\mu}{2}\dist(x,y)^2.
  \end{align*}
  Adding both inequalities, we have
  \begin{equation*}
    \mu \dist(x,y)^2\le \innp{\nabla f(x)-\Gamma{y}{x}\nabla f(y),-\exponinv{x}(y)}.
  \end{equation*}
  Further bounding the right hand side using the Cauchy-Schwarz inequalities and dividing by $\dist(x,y)$, we have
  \begin{equation*}
    \mu \dist(x,y)\le \norm{\nabla f(x)-\Gamma{y}{x}\nabla f(y)}.
  \end{equation*}
\end{proof}
\begin{proposition}\label{prop:first-order-opt}
  Let $\M$ be a Riemannian manifold and $\X\subset \M$ be a closed and g-convex set and let $f:\M\to \mathbb{R}$ be g-convex, lower semicontinuous and proper in $\X$. Then it holds that
\begin{equation*}
 0\le \innp{g^{*},\exponinv{x^*}(x)}\le \innp{g,-\exponinv{x}(x^{*})}, \quad \forall x\in \X,
\end{equation*}
where $g^{*}\in \partial f(x^{*})$ and $g\in \partial f(x)$.
\end{proposition}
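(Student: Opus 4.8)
The two inequalities are of rather different natures, so I would handle them separately. The right-hand inequality, $\langle g^{*},\exponinv{x^{*}}(x)\rangle\le\langle g,-\exponinv{x}(x^{*})\rangle$, is nothing more than monotonicity of the Riemannian subdifferential and uses no optimality whatsoever; it holds for \emph{every} $g^{*}\in\partial f(x^{*})$ and $g\in\partial f(x)$. The plan is to write the g-convex subgradient inequality at the two base points: $f(x)\ge f(x^{*})+\langle g^{*},\exponinv{x^{*}}(x)\rangle$ since $g^{*}\in\partial f(x^{*})$, and $f(x^{*})\ge f(x)+\langle g,\exponinv{x}(x^{*})\rangle$ since $g\in\partial f(x)$. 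Adding these and cancelling $f(x)-f(x^{*})$ gives $0\ge\langle g^{*},\exponinv{x^{*}}(x)\rangle+\langle g,\exponinv{x}(x^{*})\rangle$, which is exactly the claim after rearranging. (Implicitly I use that $\X$ is g-convex, so the geodesics along which the two subgradient inequalities are read off stay inside $\X$.)

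For the left-hand inequality, $0\le\langle g^{*},\exponinv{x^{*}}(x)\rangle$ for all $x\in\X$, I would invoke the first-order optimality characterization for constrained g-convex minimization on Hadamard manifolds: because $x^{*}$ minimizes the g-convex, lsc, proper $f$ over the closed g-convex set $\X$, there is a subgradient $g^{*}\in\partial f(x^{*})$ --- the one named in the statement --- with $-g^{*}$ in the normal cone of $\X$ at $x^{*}$, i.e.\ $\langle -g^{*},\exponinv{x^{*}}(x)\rangle\le 0$ for all $x\in\X$. If one prefers a self-contained route: fix $x\in\X$, let $\gamma\colon[0,1]\to\X$ be the constant-speed geodesic from $x^{*}$ to $x$ (it stays in $\X$), note $h(t):=f(\gamma(t))$ is convex on $[0,1]$ with its minimum at $t=0$, so $h'_{+}(0)\ge 0$; the subdifferential sum rule applied to $f+\iota_{\X}$ at $x^{*}$ then packages these feasible-direction inequalities into the single subgradient $g^{*}$ that certifies optimality.

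The one delicate point --- and the main obstacle --- is precisely this left inequality: it is \emph{not} valid for an arbitrary element of $\partial f(x^{*})$ (just as in Euclidean convex analysis a constrained minimizer need not satisfy $0\in\partial f(x^{*})$), so the argument must be explicit that $g^{*}$ is the subgradient produced by the optimality condition rather than a generic one, and in particular that such a uniform $g^{*}$ exists. The right inequality, by contrast, is uniform over all choices of $g^{*}\in\partial f(x^{*})$ and $g\in\partial f(x)$. Once this bookkeeping is fixed, both steps collapse to a couple of lines each.
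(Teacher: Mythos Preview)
Your proposal is correct and follows essentially the same route as the paper: the paper chains the two subgradient inequalities through $f(x^{*})-f(x)$ to obtain the right inequality (equivalent to your add-and-rearrange), and then simply appeals to the first-order optimality condition for the left inequality. Your extra care about which $g^{*}\in\partial f(x^{*})$ actually certifies the left inequality is a point the paper glosses over, but the argument is otherwise identical.
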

\begin{proof}
  By g-convexity of $f$, we have $\innp{g,\exponinv{x}(x^{*})}\le f(x^*)-f(x)\le \innp{g^{*},-\exponinv{x^{*}}(x)}$. By the first-order optimality condition, for an optimizer $x^{*}$ of $f$, we have $0\le \innp{g^{*},\exponinv{x^{*}}(x)}$ for all $x\in \X$, which concludes the proof.
\end{proof}

\begin{proposition}\label{prop:sum-bound}
  We have that $ \sum_{t=1}^{T} \frac{1}{(t+1)^2}\le 1$.
\end{proposition}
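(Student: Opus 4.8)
The plan is to reduce this to a standard telescoping estimate. First I would reindex the sum by setting $k = t+1$, so that $\sum_{t=1}^{T}\frac{1}{(t+1)^2} = \sum_{k=2}^{T+1}\frac{1}{k^2}$; the point of the shift is that every index now satisfies $k \ge 2$, which is exactly what makes the comparison below valid.

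The key step is the elementary pointwise bound $\frac{1}{k^2} \le \frac{1}{k(k-1)} = \frac{1}{k-1} - \frac{1}{k}$, which holds for every integer $k \ge 2$ since then $k-1 \ge 1 > 0$ and $k(k-1) \le k^2$. Summing this over $k = 2, \dots, T+1$ and telescoping yields
\[
\sum_{k=2}^{T+1}\frac{1}{k^2} \;\le\; \sum_{k=2}^{T+1}\left(\frac{1}{k-1} - \frac{1}{k}\right) \;=\; 1 - \frac{1}{T+1} \;\le\; 1,
\]
which is precisely the claimed inequality.

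There is essentially no obstacle here; the only minor point is handling edge cases of $T$, but the telescoping argument covers all $T \ge 1$ uniformly and the $T = 0$ case is a vacuous empty sum. An alternative would be to invoke the Basel identity $\sum_{k=1}^\infty k^{-2} = \pi^2/6$ and subtract the $k=1$ term to get $\pi^2/6 - 1 < 1$, but I would prefer the telescoping proof since it is fully self-contained and does not appeal to the value of the Basel sum.
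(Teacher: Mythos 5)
Your proof is correct. The paper proves this differently: it reindexes $\sum_{t=1}^{T}\frac{1}{(t+1)^2}=\sum_{t=1}^{T+1}\frac{1}{t^2}-1$ and then invokes the Basel bound $\sum_{t=1}^{\infty}t^{-2}\le\frac{\pi^2}{6}$, concluding with $\frac{\pi^2}{6}-1\le 1$ --- precisely the alternative you mention and set aside. Your telescoping argument via $\frac{1}{k^2}\le\frac{1}{k-1}-\frac{1}{k}$ is entirely self-contained and even yields the slightly sharper bound $1-\frac{1}{T+1}$, whereas the paper's route is shorter on the page but leans on the value (or at least an upper bound) of the Basel sum. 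Since the constant $1$ is all that is used downstream (in \cref{eq:err-D} and the telescoped regret sums), the extra sharpness buys nothing here, but both proofs are equally valid; yours is arguably preferable for a reader who wants every step elementary.
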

\begin{proof}
\[
  \sum_{t=1}^{T} \frac{1}{(t+1)^2}=\sum_{t=1}^{T+1} \frac{1}{t^2}-1 \le\sum_{t=1}^{\infty} \frac{1}{t^2}-1\le \frac{\pi^2}{6}-1\le 1.
\]
\end{proof}

\subsection{Geometric results}
\label{sec:geometric-results}
\begin{fact}[Riemannian Cosine-Law Inequalities]\label{fact:cosine_law_riemannian}
    For the vertices $x, y, p \in \M$ of a uniquely geodesic triangle of diameter $D$, we have
    \[
        \innp{\exponinv{x}(y), \exponinv{x}(p)} \geq \frac{\delta_{\! D}}{2} \dist(x,y)^2 + \frac{1}{2}\dist(p, x)^2 - \frac{1}{2}\dist(p, y)^2.
    \]
    and
    \[
        \innp{\exponinv{x}(y), \exponinv{x}(p)} \leq \frac{\zetad }{2} \dist(x,y)^2 + \frac{1}{2}\dist(p, x)^2 - \frac{1}{2}\dist(p, y)^2.
    \]
\end{fact}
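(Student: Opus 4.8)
The plan is to deduce both inequalities from a second-order (Hessian) comparison estimate for the squared-distance function, rather than from model-space trigonometry. Fix the vertex $p$ and set $h \defi \tfrac{1}{2}\dist(\cdot,p)^2$; recall that $\nabla h(z) = -\exponinv{z}(p)$. Let $\gamma\colon[0,1]\to\M$ be the constant-speed geodesic with $\gamma(0)=x$ and $\gamma(1)=y$, so that $\gamma'(0)=\exponinv{x}(y)$ and $\norm{\gamma'(t)}\equiv\dist(x,y)$, and put $g(t)\defi h(\gamma(t))$. Then $g(0)=\tfrac12\dist(p,x)^2$, $g(1)=\tfrac12\dist(p,y)^2$, and $g'(0)=\innp{\nabla h(x),\gamma'(0)}=-\innp{\exponinv{x}(y),\exponinv{x}(p)}$; since $\gamma$ is a geodesic, $g''(t)=\Hess h(\gamma(t))[\gamma'(t),\gamma'(t)]$, so controlling the inner product of interest amounts to controlling $g''$.

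The one external ingredient I would invoke is the Hessian comparison theorem (e.g.\ \citet{petersen2006riemannian}): writing $\rho\defi\dist(\cdot,p)$ and using $\Hess h = \rho\,\Hess\rho + \dif\rho\otimes\dif\rho$, on a geodesic ball around $p$ avoiding the cut locus of $p$, every eigenvalue of $\Hess h$ at a point within distance $r$ of $p$ lies in $[\delta_r,\zeta_r]$ — the radial direction contributing the eigenvalue $1$, and the tangential directions interpolating via the standard $\cot$/$\coth$ comparison governed by the bounds $\kmin,\kmax$ on the sectional curvature. Since every point of the triangle lies within distance $D$ of $p$ and the maps $r\mapsto\delta_r$, $r\mapsto\zeta_r$ are monotone, the eigenvalues of $\Hess h$ along $\gamma$ all lie in $[\delta_{\! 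D},\zetad]$. For a Hadamard manifold — the only case actually used downstream, where $\delta_{\! D}=1$ — convexity of $\dist(\cdot,p)$ along $\gamma$ gives $\dist(\gamma(t),p)\le\max\{\dist(x,p),\dist(y,p)\}\le D$, so the ball condition and cut-locus issue are automatic; in the general (possibly positively curved) case one additionally restricts $D$ below the relevant injectivity threshold.

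It then follows that $\delta_{\! D}\dist(x,y)^2\le g''(t)\le\zetad\,\dist(x,y)^2$ for all $t\in[0,1]$, so Taylor's formula with integral remainder, $g(1)=g(0)+g'(0)+\int_0^1(1-t)g''(t)\,\dif t$, yields
\[
g(0)+g'(0)+\tfrac{\delta_{\! D}}{2}\dist(x,y)^2 \;\le\; g(1) \;\le\; g(0)+g'(0)+\tfrac{\zetad}{2}\dist(x,y)^2 .
\]
Substituting $g'(0)=-\innp{\exponinv{x}(y),\exponinv{x}(p)}$, $g(0)=\tfrac12\dist(p,x)^2$, $g(1)=\tfrac12\dist(p,y)^2$ and rearranging the left-hand inequality gives the lower bound $\innp{\exponinv{x}(y),\exponinv{x}(p)}\ge\tfrac{\delta_{\! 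D}}{2}\dist(x,y)^2+\tfrac12\dist(p,x)^2-\tfrac12\dist(p,y)^2$, while rearranging the right-hand inequality gives the matching upper bound with $\zetad$ in place of $\delta_{\! D}$. I expect the entire difficulty to sit in the second paragraph: establishing the two-sided Hessian comparison bound for $h$ (a Jacobi-field / index-form computation) together with the bookkeeping that the segment $xy$ stays in the region where $h$ is $C^2$ with those bounds; once that is granted, the rest is a one-line Taylor estimate. An alternative route — comparison triangles in the constant-curvature model spaces combined with Toponogov/Alexandrov angle comparison — also works, but it trades the clean Hessian bound for a fiddlier monotonicity analysis of expressions such as $\tfrac{ab}{\sinh(ka)\sinh(kb)}\bigl(\cosh(ka)\cosh(kb)-\cosh(kc)\bigr)$.
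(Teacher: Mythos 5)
Your argument is correct. Note that the paper does not prove this fact itself; it defers to \citet{martinez2023acceleratedmin}, where (following the original arguments of Zhang--Sra for the $\zeta$ half and the positively-curved analogue for the $\delta$ half) the inequalities are obtained by trigonometric comparison with the constant-curvature model spaces: one compares the geodesic triangle $xyp$ to a triangle with the same side lengths in the model of curvature $\kmin$ (resp.\ $\kmax$), applies the hyperbolic (resp.\ spherical) law of cosines, and then runs a monotonicity argument on exactly the sort of expression you mention at the end, $\tfrac{ab}{\sinh(ka)\sinh(kb)}(\cosh(ka)\cosh(kb)-\cosh(kc))$. Your route instead packages all the curvature input into the two-sided Hessian comparison for $h=\tfrac12\dist(\cdot,p)^2$ and then reduces both inequalities to a one-line Taylor expansion of $t\mapsto h(\gamma(t))$ along the side $xy$. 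This is a genuinely different and arguably cleaner derivation, and it is internally consistent with the paper, which already invokes the same Hessian eigenvalue bound $\zeta_{\dist(\cdot,p)}$ via \citet[Lemma 2]{alimisis2020continuous} in its \cref{prop:dist-smoothness}; what the trigonometric route buys in exchange is that it works directly with distances and avoids any discussion of where $h$ is $C^2$. Your bookkeeping is sound where it matters: the identities $\nabla h(z)=-\exponinv{z}(p)$ and $g'(0)=-\innp{\exponinv{x}(y),\exponinv{x}(p)}$, the eigenvalue interval $[\delta_r,\zeta_r]$ (which contains the radial eigenvalue $1$ since $\delta_r\le 1\le\zeta_r$), the monotonicity of $r\mapsto\delta_r,\zeta_r$, and the integral remainder $\int_0^1(1-t)\,\dif t=\tfrac12$ all check out. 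The only point deserving care is the one you already flag: for the side $xy$ to stay within distance $D$ of $p$ and inside the domain where $h$ is smooth, you use convexity of $\dist(\cdot,p)$, which is automatic only in the nonpositively curved (Hadamard) case that the paper actually uses downstream; in the general uniquely geodesic case one needs the standard restriction of $D$ below the injectivity/conjugate radius, which the fact as stated leaves implicit. That caveat applies equally to the comparison-triangle proof, so it is not a gap in your argument relative to the cited one.
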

See \cite{martinez2023acceleratedmin} for a proof.
\begin{corollary}\label{cor:dist-squared-cond}
  Under the assumptions of \cref{fact:cosine_law_riemannian}, the squared distance function $\frac{1}{2}\dist(\cdot,p)^2$ is $\zetad $-smooth and $\delta_{\! D}$-strongly g-convex in the geodesic triangle defined by the vertices $x$, $y$, $p\in \M$.
\end{corollary}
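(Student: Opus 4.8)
The plan is to reduce the statement directly to \cref{fact:cosine_law_riemannian} via the standard formula for the gradient of a squared distance. Write $f \defi \frac{1}{2}\dist(\cdot,p)^2$ and recall that on a uniquely geodesic set one has $\nabla f(q) = -\exponinv{q}(p)$ at every point $q$ of the triangle. The quantity controlled by the definitions of $\L$-smoothness and $\mu$-strong g-convexity is, for two points $q_1,q_2$ of the triangle,
\[
  f(q_2) - f(q_1) - \innp{\nabla f(q_1),\, \exponinv{q_1}(q_2)} = \frac12 \dist(q_2,p)^2 - \frac12 \dist(q_1,p)^2 + \innp{\exponinv{q_1}(p),\, \exponinv{q_1}(q_2)},
\]
so everything hinges on a two-sided estimate of the inner product $\innp{\exponinv{q_1}(q_2),\exponinv{q_1}(p)}$, which is precisely what \cref{fact:cosine_law_riemannian} supplies.

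First I would apply \cref{fact:cosine_law_riemannian} to the uniquely geodesic triangle with vertices $q_1,q_2,p$, whose diameter $D'$ satisfies $D'\le D$ since $q_1,q_2$ lie in the diameter-$D$ triangle one of whose vertices is $p$, so all three pairwise distances among $q_1,q_2,p$ are at most $D$. The lower bound of the fact, $\innp{\exponinv{q_1}(q_2),\exponinv{q_1}(p)} \ge \frac{\delta_{\! D'}}{2}\dist(q_1,q_2)^2 + \frac12\dist(p,q_1)^2 - \frac12\dist(p,q_2)^2$, substituted into the identity above cancels the two distance-to-$p$ terms and yields $f(q_2) - f(q_1) - \innp{\nabla f(q_1),\exponinv{q_1}(q_2)} \ge \frac{\delta_{\! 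D'}}{2}\dist(q_1,q_2)^2$; since $r\mapsto\delta_r$ is non-increasing we get $\delta_{\! D'}\ge\delta_{\! D}$, which is the claimed $\delta_{\! D}$-strong g-convexity. Symmetrically, the upper bound of the fact gives $f(q_2) - f(q_1) - \innp{\nabla f(q_1),\exponinv{q_1}(q_2)} \le \frac{\zeta_{\! D'}}{2}\dist(q_1,q_2)^2 \le \frac{\zetad}{2}\dist(q_1,q_2)^2$, using that $r\mapsto\zeta_r$ is non-decreasing, which is $\zetad$-smoothness.

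There is essentially no hard step: the argument is a one-line substitution once the gradient identity $\nabla f(q)=-\exponinv{q}(p)$ and \cref{fact:cosine_law_riemannian} are in hand. The only points deserving a word of care are (i) justifying $\nabla f(q)=-\exponinv{q}(p)$, which is standard on a uniquely geodesic set, and (ii) the bookkeeping that every sub-triangle with vertex $p$ has diameter at most $D$, so that monotonicity of $r\mapsto\zeta_r$ and $r\mapsto\delta_r$ permits replacing $D'$ by $D$; if one only needs the inequalities for the single pair $(x,y)$ — the use made of this corollary elsewhere in the paper — step (ii) is unnecessary and the fact applies verbatim with $D'=D$.
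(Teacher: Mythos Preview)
Your proof is correct and follows precisely the paper's own approach: the paper's proof consists of the single sentence that the first inequality of \cref{fact:cosine_law_riemannian} is exactly the $\delta_{\! D}$-strong g-convexity inequality and the second is exactly the $\zetad$-smoothness inequality once one uses $\nabla\big(\tfrac12\dist(\cdot,p)^2\big)(q)=-\exponinv{q}(p)$. Your write-up simply makes this substitution explicit and adds the (correct) diameter bookkeeping for general $q_1,q_2$ in the triangle, which the paper leaves implicit.
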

\begin{proof}
  The proof follows directly by noting that the first equation of \cref{fact:cosine_law_riemannian} implies $\delta_{\! D}$-strong g-convexity and the second equation implies $\zetad $-smoothness.
\end{proof}
\begin{remark}\label{remark:tighter_cosine_inequality}
    In spaces with lower bounded sectional curvature, if we substitute the constants $\zetad $ in the previous \cref{fact:cosine_law_riemannian} by the tighter constant and $\zeta_{\dist(p,x)}$, the result also holds. See \cite{zhang2016first}.
\end{remark}

We note that if $\kmin < 0$, it is $\zetad  = \Theta(1+D\sqrt{\abs{\kmin}})$ and therefore if $c$ is a constant, we have  $\zeta_{cD} = \bigo{\zetad }$. If $\kmin \geq 0$ it is $\zeta_r = 1$, for all $r\geq 0$, so it also holds $\zeta_{cD} = \bigo{\zetad }$.

\begin{proposition}\label{prop:dist-smoothness}
  Let $\M$ be a Hadamard manifold with sectional curvature lower bounded by $\kappamin$ , then for $x,y,p\in \M$ it holds,
  \begin{equation}
    \label{eq:17}
    \norm{\Gamma{y}{x}\exponinv{y}(p)-\exponinv{x}(p)}\le \zeta(\kappamin,\bar{D})\dist(x,y),
  \end{equation}
  where $\bar{D}\defi \max\{\dist(x,p),\dist(y,p)\}$.
\end{proposition}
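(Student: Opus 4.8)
The plan is to reduce the estimate to a uniform bound on the Hessian of $g \defi \tfrac12\dist(\cdot,p)^2$ along the geodesic joining $x$ and $y$. First I would use that on a Hadamard manifold $\M$ (no cut locus) $g$ is smooth and $\exponinv{q}(p) = -\nabla g(q)$. Let $\gamma\colon[0,1]\to\M$ be the constant-speed geodesic with $\gamma(0)=x$, $\gamma(1)=y$, so $\norm{\dot\gamma(t)}\equiv\dist(x,y)$, and set $V(t)\defi \Gamma{\gamma(t)}{x}\nabla g(\gamma(t)) \in \Tansp{x}\M$, a curve in the fixed vector space $\Tansp{x}\M$. Since parallel transport commutes with covariant differentiation along $\gamma$, $\dot V(t) = \Gamma{\gamma(t)}{x}\,\Hess g(\gamma(t))[\dot\gamma(t)]$, and the fundamental theorem of calculus gives $\Gamma{y}{x}\exponinv{y}(p)-\exponinv{x}(p) = -(V(1)-V(0)) = -\int_0^1 \Gamma{\gamma(t)}{x}\,\Hess g(\gamma(t))[\dot\gamma(t)]\,\d t$. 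Hence $\norm{\Gamma{y}{x}\exponinv{y}(p)-\exponinv{x}(p)} \le \dist(x,y)\cdot\sup_{t\in[0,1]}\norm{\Hess g(\gamma(t))}_{\mathrm{op}}$, so it remains to bound this operator norm by $\zeta$.

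The core step, which I expect to be the only genuine obstacle, is the Hessian comparison bound: on a manifold with sectional curvature in $[\kappamin,0]$ one has $\Id \preceq \Hess g(q) \preceq \zeta_{\dist(q,p)}\Id$ for every $q$, the lower bound coming from nonpositive curvature and the upper bound from the curvature lower bound $\kappamin$, with equality in the constant-curvature-$\kappamin$ model space. In particular $\norm{\Hess g(q)}_{\mathrm{op}} \le \zeta_{\dist(q,p)}$. I would either cite this from \citep{zhang2016first}, or prove it directly in the Jacobi-field formulation: the map $t\mapsto \exponinv{\gamma(t)}(p)$ has covariant derivative $J'(0)$, where $J$ is the Jacobi field along the geodesic from $\gamma(t)$ to $p$ with $J(0)=\dot\gamma(t)$ and $J(1)=0$; splitting $J$ into tangential and normal parts (the tangential part is affine and contributes at most $\norm{J(0)}$), a Riccati comparison of the focusing operator $\beta$ of the normal part, which satisfies $\beta' \ge \beta^2 - \abs{\kappamin}\ell^2$ with $\ell=\dist(\gamma(t),p)$, against the model solution $\beta_{\mathrm{model}}(s) = \ell\sqrt{\abs{\kappamin}}\coth(\ell\sqrt{\abs{\kappamin}}(1-s))$ gives $\norm{J'(0)} \le \beta_{\mathrm{model}}(0)\,\norm{J(0)} = \zeta_{\ell}\,\norm{J(0)}$.

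Finally I would close the argument with two elementary facts. Distance-to-a-point functions are convex along geodesics in a Hadamard manifold, so $t\mapsto\dist(\gamma(t),p)$ is convex on $[0,1]$ and therefore $\dist(\gamma(t),p)\le\max\{\dist(x,p),\dist(y,p)\}=\bar D$; and $r\mapsto\zeta_r$ is nondecreasing (its derivative is positive since $\sinh(2r)>2r$). Combining, $\sup_{t\in[0,1]}\norm{\Hess g(\gamma(t))}_{\mathrm{op}}\le \zeta_{\bar D}=\zeta(\kappamin,\bar D)$, and substituting this into the integral bound from the first paragraph yields $\norm{\Gamma{y}{x}\exponinv{y}(p)-\exponinv{x}(p)}\le\zeta(\kappamin,\bar D)\,\dist(x,y)$, as claimed. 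As in \cref{remark:tighter_cosine_inequality}, one may keep the sharper constant $\sup_{t}\zeta_{\dist(\gamma(t),p)}$ instead of $\zeta_{\bar D}$.
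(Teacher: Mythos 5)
Your proposal is correct and follows essentially the same route as the paper's proof: both reduce the claim to the Hessian comparison bound $\norm{\Hess \tfrac12\dist(\cdot,p)^2}_{\mathrm{op}}\le\zeta_{\dist(\cdot,p)}$ (the paper cites Lemma~2 of Alimisis et al.\ for this where you cite \citep{zhang2016first} or sketch the Riccati argument) and then use convexity of $t\mapsto\dist(\gamma(t),p)$ on a Hadamard manifold together with monotonicity of $r\mapsto\zeta_r$ to replace the pointwise constant by $\zeta_{\bar D}$. You merely make explicit the parallel-transport/fundamental-theorem-of-calculus step that the paper leaves implicit when passing from the Hessian bound to the gradient-Lipschitz estimate.
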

\begin{proof}
   Let $\Phi_p(x)\defi \frac{1}{2}\dist(x,y)^2$. Then $\nabla_x \Phi_p(x)=-\exponinv{x}(p)$ and $\Phi_p(x)$ is $(\zeta_{\dist(x,p)})$-smooth between $x$ and $p$ as the eigenvalues of the Hessian of $\Phi_p(x)$ are upper bounded by $\zeta_{\dist(x,p)}$ by \citet[Lemma 2]{alimisis2020continuous}.
    Note that the smoothness constant increases with the distance to $p$. Since in Hadamard manifolds, the distance between $p$ and other points in the geodesic triangle defined by $x$, $y$, $p$ is maximized at the vertices, then $\Phi_p$ is $\zeta_{\bar{D}}$ smooth in this geodesic triangle, and thus we have that
   \begin{equation*}
   \norm{\nabla\Phi_p(x)-\Gamma{y}{x}\nabla\Phi_p(y)}=\norm{\Gamma{y}{x}\exponinv{y}(p)-\exponinv{x}(p)}\le\zeta_{\bar{D}}\dist(x,y).
   \end{equation*}
\end{proof}
  \begin{fact}[\cite{zhang2023sion}, Lemma C.2]\label{fact:g-avg}
    Suppose $f$ is geodesically convex-concave.
Then for any iteration $(x_{t},y_t)$, the geodesic averages $(\bar{x}_t,\bar{y}_t)$, i.e.,
  \begin{equation}
    \label{eq:g-avg}\tag{\sc geo-avg}
    (\bar{x}_{1},\bar{y}_1)= (x_1,y_1),\quad  t\in \{1,\ldots, T-1\}:\quad   \begin{cases}
     \bar{x}_{t+1}& = \expon{\bar{x}_{t}}(\frac{1}{t+1}\exponinv{\bar{x}_{t}}(x_{t+1}))\\
     \bar{y}_{t+1}& = \expon{\bar{y}_{t}}(\frac{1}{t+1}\exponinv{\bar{y}_{t}}(x_{t+1}))
    \end{cases}
  \end{equation}
 satisfy for any positive integer $T$,
    \begin{equation*}
     f(\bar{x}_T,y)-f(x,\bar{y}_T)\le \frac{1}{T}\sum_{t=1}^{T} \left[ f(x_t,y)-f(x,y_t) \right].
    \end{equation*}
  \end{fact}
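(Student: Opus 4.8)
The result is the Riemannian analogue of the textbook convex--concave averaging argument, and the plan is to prove the two halves of the inequality independently and then subtract. First I would fix an arbitrary $y$ (in the set where $f(\cdot,y)$ is g-convex) and show by induction on $t$ that $f(\bar{x}_t,y)\le \frac{1}{t}\sum_{i=1}^{t}f(x_i,y)$. The base case $t=1$ is trivial since $\bar{x}_1=x_1$. For the inductive step, the key observation is that, by the update rule in \eqref{eq:g-avg}, $\bar{x}_{t+1}$ is exactly the point at geodesic parameter $\frac{1}{t+1}$ on the minimizing geodesic from $\bar{x}_t$ to $x_{t+1}$; geodesic convexity of $f(\cdot,y)$ along that geodesic therefore gives $f(\bar{x}_{t+1},y)\le \frac{t}{t+1}f(\bar{x}_t,y)+\frac{1}{t+1}f(x_{t+1},y)$, and plugging in the inductive hypothesis makes the right-hand side collapse to $\frac{1}{t+1}\sum_{i=1}^{t+1}f(x_i,y)$.

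Running the identical induction on $-f(x,\cdot)$ --- which is geodesically convex because $f$ is g-concave in its second argument --- yields the reverse-direction bound $f(x,\bar{y}_t)\ge \frac{1}{t}\sum_{i=1}^{t}f(x,y_i)$ for every admissible $x$. Evaluating both bounds at $t=T$ and subtracting gives
\[
 f(\bar{x}_T,y)-f(x,\bar{y}_T)\ \le\ \frac{1}{T}\sum_{t=1}^{T}\big(f(x_t,y)-f(x,y_t)\big),
\]
which is precisely the claim.

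The only subtlety I would be careful about is that every geodesic invoked above must stay inside the set on which $f$ is assumed g-convex--g-concave (and, in the applications of this fact elsewhere in the paper, $\L$-smooth). This is automatic: a g-convex set contains every geodesic joining two of its points, so a secondary induction shows that $\bar{x}_t$ lies in the g-convex hull of $\{x_1,\dots,x_t\}$ and $\bar{y}_t$ in that of $\{y_1,\dots,y_t\}$, hence all the geodesics used in the averaging remain in the domain. Beyond this bookkeeping I do not expect a genuinely hard step; the proof is just a weighted telescoping of Jensen-type inequalities, and the incremental form of the update rule is arranged exactly so that the accumulated weights reduce to the uniform average $\tfrac{1}{T}$.
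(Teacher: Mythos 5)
Your proof is correct and is exactly the standard Jensen-type induction used in the cited source (\citet{zhang2023sion}, Lemma C.2); the paper itself imports this as a fact without reproving it, so there is nothing to compare beyond noting that your induction $f(\bar{x}_{t+1},y)\le \tfrac{t}{t+1}f(\bar{x}_t,y)+\tfrac{1}{t+1}f(x_{t+1},y)$ and its concave counterpart are the intended argument. One minor point: the displayed recursion for $\bar{y}_{t+1}$ contains a typo ($\exponinv{\bar{y}_{t}}(x_{t+1})$ should be $\exponinv{\bar{y}_{t}}(y_{t+1})$), which your argument implicitly and correctly assumes.
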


\subsection{G-convex minimization}
\label{sec:g-conv-minim}

\begin{fact}[Riemannian Gradient Descent (\RGD{})]\label{fact:rgd-convergence}
  Consider a uniquely geodesic Riemannian manifold $\M$ with sectional curvature in $[\kmin,\kmax]$ and a function $f:\M\to \mathbb{R}$ which is $\mu$-strongly g-convex and $\L $-smooth in  $\X\defi\ball(\xast, (1+\sqrt{5})\dist(x_0,x^*)\zetar /2) \subset \M$. Then the iterates of \RGD{}, i.e., $x_{t+1}\gets \expon{x_t}(-\frac{1}{\L }\nabla f(x_t))$, satisfy $x_t \in \X$ and we obtain an $\epsilon$-minimizer in  $\bigo{\frac{\L }{\mu}\log(\frac{L\dist(x_0,x^{*})^2}{\epsilon})}$ iterations.
\end{fact}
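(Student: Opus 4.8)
The plan is to establish two things essentially independently: (i) the linear convergence rate, assuming the iterates remain in $\X$, and (ii) the forward-invariance $x_t\in\X$ of the ball on which $\L$-smoothness and $\mu$-strong g-convexity are assumed. I would first do (i) under the hypothesis of (ii), and then discharge (ii) by induction on $t$.

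For the rate, this is the Riemannian analogue of the Polyak--{\L}ojasiewicz argument. Applying the upper ($\L$-smoothness) inequality with base point $x_t$ and second point $x_{t+1}=\expon{x_t}(-\L^{-1}\grad f(x_t))$ gives the descent lemma $f(x_{t+1})\le f(x_t)-\tfrac{1}{2\L}\norm{\grad f(x_t)}^2$ (valid once $x_t,x_{t+1}$ and the connecting geodesic lie in $\X$). Applying $\mu$-strong g-convexity with base point $x_t$ and second point $\xast$, then Young's inequality on $\innp{\grad f(x_t),-\exponinv{x_t}(\xast)}$, yields the gradient-domination bound $\norm{\grad f(x_t)}^2\ge 2\mu\,(f(x_t)-f(\xast))$. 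Combining the two gives $f(x_{t+1})-f(\xast)\le(1-\mu/\L)(f(x_t)-f(\xast))$, hence $f(x_t)-f(\xast)\le(1-\mu/\L)^t(f(x_0)-f(\xast))$; and the upper smoothness inequality at $\xast$ (where $\grad f(\xast)=0$) gives $f(x_0)-f(\xast)\le\tfrac{\L}{2}\dist(x_0,\xast)^2$. So we reach an $\epsilon$-minimizer after $\bigo{\tfrac{\L}{\mu}\log\tfrac{\L\dist(x_0,\xast)^2}{\epsilon}}$ iterations.

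For the invariance, assume inductively $x_0,\dots,x_t\in\X$. Since $\X$ is a geodesically convex ball centered at $\xast$, the geodesic $[x_t,\xast]$ lies in $\X$, so $\L$-smoothness gives $\norm{\grad f(x_t)}=\norm{\grad f(x_t)-\Gamma{\xast}{x_t}\grad f(\xast)}\le\L\dist(x_t,\xast)$, whence $\dist(x_t,x_{t+1})=\L^{-1}\norm{\grad f(x_t)}\le\dist(x_t,\xast)$. Then apply the cosine-law inequality \cref{fact:cosine_law_riemannian}, with the tighter constant from \cref{remark:tighter_cosine_inequality}, to the geodesic triangle $(x_t,x_{t+1},\xast)$; using $\exponinv{x_t}(x_{t+1})=-\L^{-1}\grad f(x_t)$ together with the strong g-convexity inequality $\innp{\grad f(x_t),\exponinv{x_t}(\xast)}\le f(\xast)-f(x_t)-\tfrac{\mu}{2}\dist(x_t,\xast)^2$, this bounds $\dist(x_{t+1},\xast)^2$ by $(1-\mu/\L)\dist(x_t,\xast)^2$ plus a term of order $\tfrac{\zeta_{\dist(x_t,\xast)}-1}{\L}(f(x_t)-f(\xast))$. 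That extra term is controlled by the geometric decay of the gap and by $f(x_t)-f(\xast)\le\tfrac{\L}{2}\dist(x_0,\xast)^2$, so the cumulative displacement over all steps is governed by a convergent geometric series whose sum depends only on $\zeta$. Choosing the radius $\tfrac{1+\sqrt5}{2}\,\zetar\,\dist(x_0,\xast)$ then closes the induction: the golden-ratio constant $\tfrac{1+\sqrt5}{2}$ is the root of the quadratic forcing the triangle-inequality image of $\X$ under one gradient step (together with the $\zeta$-factor loss of the Riemannian cosine law) back into $\X$.

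The rate is routine; the main obstacle is the invariance, for two intertwined reasons. First, there is a circularity: the descent lemma and the gradient bound can only be invoked once $x_{t+1}$ is known to lie in $\X$, so one cannot get started without committing to a domain and running the induction --- exactly the step-size/domain circular dependence flagged in the introduction. Second, unlike the Euclidean case, where $\dist(x_t,\xast)$ is monotonically non-increasing under a $\L^{-1}$-step, the Riemannian cosine law injects a curvature term that must be absorbed, and doing so while keeping the radius proportional only to $\zetar\,\dist(x_0,\xast)$ (rather than, say, $\sqrt{\L/\mu}\,\dist(x_0,\xast)$, which the crude estimates would give) is the delicate point; this is the content we would import from the analysis of \RGD{} in the cited literature.
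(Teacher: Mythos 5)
The paper does not actually prove this statement: it is labelled a \emph{fact} and the text immediately below it reads ``See \citet[Proposition 2]{martinez2024convergence} for the proof,'' so there is no in-paper argument to compare against. Your first half (the rate, assuming invariance) is correct and standard: descent lemma plus the gradient-domination bound $\norm{\nabla f(x_t)}^2\ge 2\mu(f(x_t)-f(\xast))$ gives the $(1-\mu/\L)$ contraction of the gap, and $f(x_0)-f(\xast)\le\tfrac{\L}{2}\dist(x_0,\xast)^2$ finishes it.

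The gap is in the invariance half, and it is concrete: the mechanism you describe does not produce a radius depending only on $\zetar$. From the cosine law you correctly arrive at a recursion of the form $\dist(x_{t+1},\xast)^2\le(1-\mu/\L)\dist(x_t,\xast)^2+\tfrac{2(\zeta-1)}{\L}\,(f(x_t)-f(\xast))$, and you then propose to control the accumulated drift ``by the geometric decay of the gap.'' But unrolling that recursion with $f(x_s)-f(\xast)\le(1-\mu/\L)^s(f(x_0)-f(\xast))$ yields a drift term $\tfrac{2(\zeta-1)}{\L}\sum_{s=0}^{t}(1-\mu/\L)^{t-s}(1-\mu/\L)^{s}(f(x_0)-f(\xast))=\tfrac{2(\zeta-1)}{\L}(t+1)(1-\mu/\L)^{t}(f(x_0)-f(\xast))$, and $\sup_t (t+1)(1-\mu/\L)^t=\Theta(\L/\mu)$ — i.e.\ exactly the $\sqrt{\L/\mu}$-sized ball you yourself flag as the ``crude'' outcome to be avoided. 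So the argument as written does not close the induction for the stated radius; you acknowledge this and defer the delicate step to the cited reference, which leaves the proof incomplete at precisely the point the statement is nontrivial. The standard repair is not a geometric series but a Lyapunov/telescoping argument: keep the descent lemma in the form $\dist(x_t,x_{t+1})^2=\L^{-2}\norm{\nabla f(x_t)}^2\le\tfrac{2}{\L}\bigl(f(x_t)-f(x_{t+1})\bigr)$ inside the cosine-law bound, which shows that
\begin{equation*}
\Phi_t \defi \dist(x_t,\xast)^2+\frac{2\zeta}{\L}\bigl(f(x_t)-f(\xast)\bigr)
\end{equation*}
is non-increasing, whence $\dist(x_t,\xast)^2\le\Phi_0\le(1+\zeta)\dist(x_0,\xast)^2$ with no $\L/\mu$ dependence; one then checks that $\sqrt{1+\zeta}\le\tfrac{1+\sqrt5}{2}\zeta$ for $\zeta\ge1$ and that the one-step overshoot needed before the descent lemma may legitimately be applied (via $\dist(x_t,x_{t+1})\le\dist(x_t,\xast)$, which uses only smoothness along $[x_t,\xast]\subset\X$) still lands inside $\X$. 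Without some version of this potential argument, the invariance claim — the only part of the fact that is not routine — is unproved.
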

See \citet[Proposition 2]{martinez2024convergence} for the proof.
\begin{fact}[Projected Riemannian Gradient Descent (\PRGD{})]\label{fact:prgd-convergence}
Let $f: \M\rightarrow \mathbb{R}$ be a $\mu$-strongly g-convex, $\L $-smooth and $L_p$-Lipschitz function in a g-convex compact subset $\X\subset \M$ of a Hadamard manifold $\M$. For an initial point $x_0\in\X$ and $\tilde{R} \defi L_p/\L $, after
$$
T \geq \min\left\{ \frac{2\zetar \L }{\mu} \log \left( \frac{f(x_0)-f(x^{*})}{\epsilon} \right), 1+ \frac{2\zeta_{\!\tilde{R}} \L }{\mu}\log \left(\frac{\L  \zeta_{\!\tilde{R}}  \dist^2(x_0,x^{*})}{2\epsilon} \right)\right\}
$$
steps of \PRGD{} with update rule $x_{t+1}\gets \proj{\X}\left( \expon{x_t}\left(-\frac{1}{\L  }\nabla f(x_t)\right) \right)$, we have $f(x_T)-f(x^\ast) \leq \epsilon$.
\end{fact}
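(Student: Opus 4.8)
The plan is to prove a single per-iteration contraction and then extract the two bounds inside the $\min$ by tracking progress in two different potentials — the function gap $f(x_t)-f(x^\ast)$ and the squared distance $\dist(x_t,x^\ast)^2$ — and by controlling the curvature constant in two complementary ways.

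First I would set $y_{t+1}\defi\expon{x_t}(-\L^{-1}\nabla f(x_t))$ for the unprojected gradient step, so that $x_{t+1}=\proj{\X}(y_{t+1})$. Since $\M$ is Hadamard and $\X$ is g-convex, $\proj{\X}$ is non-expansive and fixes $x^\ast\in\X$, hence $\dist(x_{t+1},x^\ast)\le\dist(y_{t+1},x^\ast)$; this lets us analyze the plain \RGD{} step while staying inside $\X$ for free. Applying the Riemannian cosine-law inequality (\cref{fact:cosine_law_riemannian} with the sharper constant of \cref{remark:tighter_cosine_inequality}) to the triangle $x_t,y_{t+1},x^\ast$, substituting $\exponinv{x_t}(y_{t+1})=-\L^{-1}\nabla f(x_t)$ and $\dist(x_t,y_{t+1})=\L^{-1}\norm{\nabla f(x_t)}$, bounding $\innp{\nabla f(x_t),\exponinv{x_t}(x^\ast)}$ by $\mu$-strong g-convexity and $\L^{-1}\norm{\nabla f(x_t)}^2$ by the descent lemma $\L^{-1}\norm{\nabla f(x_t)}^2\le 2(f(x_t)-f(x^\ast))$ (\cref{prop:gap-to-grad}), I obtain a one-step estimate of the form
$$\dist(x_{t+1},x^\ast)^2\le\Bigl(1-\frac{\mu}{\L}\Bigr)\dist(x_t,x^\ast)^2+\frac{2(\zeta-1)}{\L}\bigl(f(x_t)-f(x^\ast)\bigr),$$
where $\zeta$ is the curvature constant of the region containing the triangle. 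Feeding in $f(x_t)-f(x^\ast)\ge\tfrac{\mu}{2}\dist(x_t,x^\ast)^2$ collapses this to a genuine contraction $\dist(x_{t+1},x^\ast)^2\le(1-c/\zeta)\dist(x_t,x^\ast)^2$ with $c=\Theta(\mu/\L)$. An entirely parallel computation — starting instead from $\L$-smoothness at $x_{t+1}$ and the first-order optimality of $x_{t+1}=\proj{\X}(y_{t+1})$, i.e. a Riemannian three-point inequality for the metric projection — yields the contraction $f(x_{t+1})-f(x^\ast)\le(1-c/\zeta)(f(x_t)-f(x^\ast))$ in the function gap.

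The two terms of the $\min$ then arise from two bounds on $\zeta$. For the first, the distance contraction shows every iterate lies in $\ball(x^\ast,\dist(x_0,x^\ast))$ — a short induction, since the $\zeta$ used at step $t$ is already controlled by the contraction proved for steps $<t$ — so one may take $\zeta=\zetar$ with $R=\dist(x_0,x^\ast)$; iterating the function-gap contraction and solving $(1-c/\zetar)^T(f(x_0)-f(x^\ast))\le\epsilon$ gives the first bound. For the second, the Lipschitz hypothesis forces $\dist(x_t,y_{t+1})=\L^{-1}\norm{\nabla f(x_t)}\le L_p/\L=\tilde R$, so the cosine-law estimate needs only $\zeta=\zeta_{\tilde R}$ on the step-length term; iterating the distance contraction and converting at the very end via $\L$-smoothness, $f(x_T)-f(x^\ast)\le\tfrac{\L}{2}\dist(x_T,x^\ast)^2$, produces $1+\frac{2\zeta_{\tilde R}\L}{\mu}\log\bigl(\frac{\L\zeta_{\tilde R}\dist^2(x_0,x^\ast)}{2\epsilon}\bigr)$, the additive $1$ absorbing the single smoothness conversion step. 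The main obstacle is the geometric bookkeeping: arranging that $\zeta$ in the final bound depends only on $\dist(x_0,x^\ast)$ (resp. $L_p/\L$) rather than on $\diam(\X)$, which forces the distance-contraction and the cosine-law estimate to be interleaved inductively, and making the Riemannian three-point property of the projection precise in the function-gap version (the Hadamard analog of the Euclidean identity $\innp{x_t-\L^{-1}\nabla f(x_t)-x_{t+1},\,u-x_{t+1}}\le 0$). The remaining strongly-convex–smooth linear-rate algebra and the non-expansiveness of the projection are routine.
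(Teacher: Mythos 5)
First, note that the paper does not contain its own proof of this fact: it is imported verbatim from \citet[Proposition 6]{martinez2023acceleratedminmax}, so the only in-paper ``proof'' is that citation. Judging your proposal on its own terms, the architecture is the right shape (non-expansive projection in Hadamard manifolds, cosine law, two regimes for the curvature constant, the additive $1$ coming from a final smoothness conversion), but the central one-step estimate fails at exactly the point that makes the \emph{constrained} case hard.

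The gap is your use of $\L^{-1}\norm{\nabla f(x_t)}^2\le 2(f(x_t)-f(x^\ast))$ via \cref{prop:gap-to-grad}. That proposition requires $x^\ast\in\argmin_{x\in\M}f(x)$, i.e.\ the \emph{unconstrained} minimizer with $\nabla f(x^\ast)=0$. Here $x^\ast$ minimizes $f$ only over $\X$ and may lie on the boundary with $\norm{\nabla f(x^\ast)}$ as large as $L_p$; taking $x_t=x^\ast$ already falsifies the inequality, and the unprojected step $y_{t+1}=\expon{x_t}(-\L^{-1}\nabla f(x_t))$ can exit $\X$ and attain a value below $f(x^\ast)$, so the intermediate bound $f(y_{t+1})\ge f(x^\ast)$ also fails. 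The residual term in your cosine-law estimate is therefore really $\frac{\zeta}{\L^2}\norm{\nabla f(x_t)}^2$, which does not vanish as $x_t\to x^\ast$, and your displayed recursion is not valid. Moreover, even granting that recursion, ``feeding in $f(x_t)-f(x^\ast)\ge\frac{\mu}{2}\dist(x_t,x^\ast)^2$'' substitutes a \emph{lower} bound into a term with nonnegative coefficient $\frac{2(\zeta-1)}{\L}$, which is the wrong direction; the available upper bound from smoothness would give a factor $(1-\mu/\L+(\zeta-1))$, which is not a contraction. The load-bearing step you are missing is the obtuse-angle/three-point property of the metric projection at $x_{t+1}$, namely $\innp{\exponinv{x_{t+1}}(y_{t+1}),\exponinv{x_{t+1}}(u)}\le 0$ for all $u\in\X$, combined with smoothness applied between $x_t$ and $x_{t+1}$ (both of which lie in $\X$): this is what absorbs the step-length term $\dist(x_t,x_{t+1})^2$ without ever comparing $\norm{\nabla f(x_t)}^2$ to the gap at the constrained optimum. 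You mention this three-point inequality only for your ``parallel'' function-gap contraction and defer it as bookkeeping, but without it neither of your two contractions is established. Your identification of where $\zeta_{\tilde R}$ enters (the triangle side $\dist(x_t,y_{t+1})\le L_p/\L$) and of the source of the additive $1$ is correct.
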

See \citet[Proposition 6]{martinez2023acceleratedminmax} for the proof.\\
We note that the original convergence result of Composite Riemannian Gradient Descent \citep[Proposition 5]{martinez2024convergence} contains a minor error. The following proposition is a corrected version.

\begin{proposition}[Composite Riemannian Gradient Descent (\CRGD{})]\label{fact:crgd-convergence}
    Let $\M$ be a uniquely geodesic Riemannian manifold and let $\X \subset\M$ be compact and g-convex. Let $f:\M\to \mathbb{R}$ be g-convex and $\L $-smooth in $\X$ and $g:\M\to \mathbb{R}$ be g-convex, proper and lower semicontinuous in $\X$ such that $F \defi f +g$ is $\mu$-strongly g-convex in $\X$, and $x^\ast\defi\argmin_{x\in\X}F(x)$. Define the update rule of \CRGD{} as follows
\begin{equation*}
    x_{t+1}{\gets}\argmin_{y\in\X}\left\{\innp{\nabla f(x_t), \exponinv{x_t}(y)}+\frac{\L }{2}\dist(x_t, y)^2 + g(y)\right\}.
\end{equation*}
Then
    \[
        F(x_{t+1})-F(x^\ast) \leq \left(1-\min\left\{\frac{\mu}{4L},\frac{1}{2}\right\}\right)(F(x_t) - F(x^\ast)).
      \]
      and in particular
      \begin{equation*}
       F(x_T)-f(x^{*})\le (F(x_0)-F(x^{*})) \exp\left(-T\min\left\{\frac{\mu}{4L},\frac{1}{2}\right\}\right).
      \end{equation*}
\end{proposition}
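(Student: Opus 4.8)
The plan is to prove the per-step contraction
\[
F(x_{t+1})-F(x^\ast)\le\Big(1-\min\big\{\tfrac{\mu}{4L},\tfrac12\big\}\Big)\big(F(x_t)-F(x^\ast)\big),
\]
and then obtain the claimed geometric rate by iterating it and using $1-a\le e^{-a}$. The heart of the matter is a standard "sufficient decrease + strong convexity" argument adapted to the composite Riemannian setting, so I would structure it as follows.

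First I would exploit the definition of the update. Writing $\Phi_t(y)\defi\innp{\nabla f(x_t),\exponinv{x_t}(y)}+\frac{\L}{2}\dist(x_t,y)^2+g(y)$, the iterate $x_{t+1}$ is the minimizer of $\Phi_t$ over $\X$; since $x_t\in\X$ is feasible, $\Phi_t(x_{t+1})\le\Phi_t(x_t)=g(x_t)$. Combining this with $\L$-smoothness of $f$ in $\X$ (the upper bound $f(x_{t+1})\le f(x_t)+\innp{\nabla f(x_t),\exponinv{x_t}(x_{t+1})}+\frac{\L}{2}\dist(x_t,x_{t+1})^2$) yields the key descent inequality
\[
F(x_{t+1})\le g(x_t)+f(x_t) + \big(\Phi_t(x_{t+1})-g(x_t)\big)\le F(x_t),
\]
and, more usefully, for any feasible $z\in\X$,
\[
F(x_{t+1})\le f(x_t)+\innp{\nabla f(x_t),\exponinv{x_t}(z)}+\tfrac{\L}{2}\dist(x_t,z)^2+g(z).
\]
Now I would plug in $z=\gamma(s)$, the point at parameter $s\in[0,1]$ on the geodesic from $x_t$ to $x^\ast$ (which stays in $\X$ by g-convexity), use g-convexity of $f$ (so $\innp{\nabla f(x_t),\exponinv{x_t}(\gamma(s))}\le f(\gamma(s))-f(x_t)\le (1-s)f(x_t)+sf(x^\ast)-f(x_t)$ after also using g-convexity of $f$ along the geodesic), g-convexity of $g$, and $\dist(x_t,\gamma(s))=s\,\dist(x_t,x^\ast)$. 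This gives
\[
F(x_{t+1})\le (1-s)F(x_t)+sF(x^\ast)+\tfrac{\L s^2}{2}\dist(x_t,x^\ast)^2.
\]
Finally I would bring in $\mu$-strong g-convexity of $F$ in $\X$, which gives $\frac{\mu}{2}\dist(x_t,x^\ast)^2\le F(x_t)-F(x^\ast)$ (using that $x^\ast$ is the minimizer so the gradient/subgradient term is non-negative, cf.\ the first-order optimality argument of \cref{prop:first-order-opt}), hence $\dist(x_t,x^\ast)^2\le\frac{2}{\mu}(F(x_t)-F(x^\ast))$. Substituting and subtracting $F(x^\ast)$:
\[
F(x_{t+1})-F(x^\ast)\le\Big(1-s+\tfrac{\L s^2}{\mu}\Big)\big(F(x_t)-F(x^\ast)\big).
\]
Optimizing over $s\in[0,1]$: the unconstrained optimum is $s=\mu/(2L)$, giving factor $1-\mu/(4L)$; if $\mu/(2L)>1$ we take $s=1$, giving factor $\L/\mu\le 1/2$. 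In both cases the factor is at most $1-\min\{\mu/(4L),1/2\}$, which is the claimed contraction. Iterating from $t=0$ and using $(1-a)^T\le e^{-aT}$ gives the stated $F(x_T)-F(x^\ast)\le(F(x_0)-F(x^\ast))\exp(-T\min\{\mu/(4L),1/2\})$.

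The main obstacle I anticipate is the bookkeeping of \emph{where} each inequality is valid: smoothness and g-convexity of $f$, g-convexity of $g$, and strong g-convexity of $F$ are only assumed on the compact g-convex set $\X$, and the update minimizes over $\X$, so I must check that every auxiliary point used — $x_{t+1}$ itself and the interpolating points $\gamma(s)$ — lies in $\X$. Feasibility of $x_{t+1}$ is immediate since it is the argmin over $\X$; feasibility of $\gamma(s)$ follows from g-convexity of $\X$ since both $x_t$ and $x^\ast$ lie in $\X$. The only genuinely subtle point (and presumably the "minor error" in the original \citep[Proposition 5]{martinez2024convergence} that this corrected proposition fixes) is handling the regime $\mu>2L$: there the optimal $s$ is clamped to $1$ and one must be careful that the resulting factor $\L/\mu$ is correctly bounded by $1/2$ rather than by $1-\mu/(4L)$, which would be negative. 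Everything else is routine one-line estimates.
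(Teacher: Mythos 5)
Your proof is correct and follows essentially the same route as the paper's: bound $F(x_{t+1})$ by the linearized-plus-quadratic model at an arbitrary feasible point via $\L$-smoothness, restrict to the geodesic from $x_t$ to $x^\ast$, apply g-convexity and then $\mu$-strong g-convexity of $F$, and optimize the interpolation parameter over $[0,1]$ (with the clamping to $1$ when $\mu>2L$, which is indeed the point the corrected proposition fixes). The only step you omit that the paper spells out is the existence of the $\argmin$ in the update rule, which the paper verifies explicitly (via compact sublevel sets) because $g$ is only assumed proper and lower semicontinuous.
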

\begin{proof}
    We first note that the $\argmin$ in the update rule exists.
    Since $g$ is proper, lower semicontinuous and g-convex in $\X$, we have that $\Y \defi \X \cap \operatorname{dom}(g)$ is non-empty, closed and if $x \in \Y$ and $v\in\partial g(x)$, we have that $\{ y \in \Y\ |\ \frac{\L }{4}\dist(x_t, y)^2 + \innp{v, \exponinv{x}(y)} \leq \frac{\L }{4} \dist(x_t, x)^2 \}$ is compact by strong convexity of $x \mapsto \dist(x_t, x)^2$. We also have that $\{y\in Y \ |\ \frac{\L }{4}\dist(x_t, y)^2 + \innp{\nabla f(x), \exponinv{x_t}(y)} \leq \frac{\L }{4}\dist(x_t, x)^2 + \innp{\nabla f(x), \exponinv{x_t}(x)} \}$ is compact. The union of these two compact sets is compact and if we consider $z$ not in this union, we have  $\circled{2}$ below
\begin{equation*}
    \begin{aligned}
        \innp{\nabla f(x_t), \exponinv{x_t}(z)} + \frac{\L }{2}\dist(x_t, z)^2 + g(z) &\circled{1}[\geq]  \innp{\nabla f(x_t), \exponinv{x_t}(z)} + \frac{\L }{2}\dist(x_t, z)^2 + g(x) + \innp{v, \exponinv{x}(z)} \\
        &\circled{2}[>] \innp{\nabla f(x_t), \exponinv{x_t}(x)} + \frac{\L }{2}\dist(x_t, x)^2 + g(x),
  \end{aligned}
  \end{equation*}
    where $\circled{1}$ uses $v\in \partial g(x)$. This means that the minimization problem can be constrained to this union only and since it is compact the $\argmin$ exists.

    Now we prove the convergence result. We have
\begin{equation}\label{eq:crgd-convergence}
 \begin{aligned}
     F(x_{t+1}) &\circled{1}[\leq] \min_{x\in\X}\left\{f(x_t) + \innp{\nabla f(x_t), x - x_t}_{x_t} + \frac{\L }{2}\dist(x, x_t)^2 + g(x) \right\} \\
     &\circled{2}[\leq] \min_{x\in\X} \left\{F(x) + \frac{\L }{2}\dist(x, x_t)^2 \right\} \\
     & \circled{3}[\leq] \min_{\alpha \in [0,1]} \left\{\alpha F(x^\ast) + (1-\alpha) F(x_t) + \frac{\L \alpha^2}{2}\dist(x^\ast, x_t)^2 \right\} \\
     & \circled{4}[\leq] \min_{\alpha \in [0,1]} \left\{F(x_t) - \alpha\left(1-\alpha\frac{\L }{\mu}\right)\left(F(x_t)-F(x^\ast)\right) \right\} \\
     &\circled{5}[=] F(x_t) - \min\left\{\frac{\mu}{4L},\frac{1}{2}\right\}(F(x_t)-F(x^\ast)).
 \end{aligned}
\end{equation}
    Above, $\circled{1}$ holds by smoothness and the update rule of the composite Riemannian gradient descent algorithm. The g-convexity of $f$ implies $\circled{2}$. Inequality $\circled{3}$ results from restricting the min to the geodesic segment between $x^\ast$ and $x_t$ so that ${x = \expon{x_t}(\alpha \exponinv{x_t}(x^\ast) + (1-\alpha)\exponinv{x_t}(x_t))}$. We also use the g-convexity of $F$. In $\circled{4}$, we used strong convexity of $F$ to bound $\frac{\mu}{2} \dist(x^\ast, x_t)^2 \leq F(x_t)-F(x^\ast)$. Finally, in $\circled{5}$ we substituted $\alpha$ by the value that minimizes the expression, which is $\max\{1,\mu/2L\}$. The result follows by subtracting $F(x^\ast)$ to the inequality above and recursively applying the resulting inequality from $t=1$ to $T \geq \max\{\frac{\L }{4\mu},2\}\log(\frac{F(x_0)-F(x^\ast)}{\epsilon})$.
\end{proof}
\begin{corollary}[Composite warm start]\label{cor:composite-warm-start}
  Consider the setting of \cref{fact:crgd-convergence}.
  Then, we have for all $z\in \X$ that $F(x_{t+1})-F(z)\le \frac{\L }{2}\dist(x_t,z)^2$.
\end{corollary}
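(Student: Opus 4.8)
The plan is to extract the desired bound directly from the first two inequalities in the convergence proof of \cref{fact:crgd-convergence}, namely the steps labelled $\circled{1}$ and $\circled{2}$ in \cref{eq:crgd-convergence}. These only use the $\L$-smoothness of $f$, the defining update rule of \CRGD{}, and the g-convexity of $f$ — and crucially \emph{not} the strong g-convexity of $F$ nor the restriction of the minimization to the geodesic segment between $x_t$ and $x^\ast$, so they already produce a bound against an arbitrary feasible point.

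Concretely, first I would note that the \CRGD{} iterate $x_{t+1}$ lies in $\X$ by construction, since it is defined as an $\argmin$ over $y\in\X$. Then, writing $F = f + g$ and using $\L$-smoothness of $f$ on $\X$ (applied along the geodesic from $x_t$ to $x_{t+1}$, both of which lie in $\X$) together with the update rule, one obtains
\[
F(x_{t+1}) \le \min_{x\in\X}\Big\{ f(x_t) + \innp{\nabla f(x_t), \exponinv{x_t}(x)} + \tfrac{\L}{2}\dist(x, x_t)^2 + g(x) \Big\}.
\]
Applying the first-order g-convexity inequality $f(x_t) + \innp{\nabla f(x_t), \exponinv{x_t}(x)} \le f(x)$, valid for all $x\in\X$, the right-hand side is at most $\min_{x\in\X}\{F(x) + \tfrac{\L}{2}\dist(x,x_t)^2\}$.

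Finally, I would evaluate this minimum at the particular point $x = z$, which is admissible because $z\in\X$: this gives $F(x_{t+1}) \le F(z) + \tfrac{\L}{2}\dist(z, x_t)^2$, i.e., the claim (and if $g(z) = +\infty$ the inequality is vacuous, so nothing needs to be checked in that degenerate case). There is essentially no obstacle here; the only point requiring a word of care is that the smoothness estimate for $f$ must be applied between $x_t$ and $x_{t+1}$, which is legitimate precisely because both endpoints belong to the set $\X$ on which smoothness is assumed, and that the minimization set $\X$ in the \CRGD{} subproblem is the same set to which $z$ is assumed to belong.
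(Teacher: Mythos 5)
Your proposal is correct and follows essentially the same route as the paper: the paper's proof also takes the chain of inequalities in \cref{eq:crgd-convergence} up to step $\circled{2}$, obtaining $F(x_{t+1})\le \min_{x\in\X}\{F(x)+\frac{\L}{2}\dist(x,x_t)^2\}$, and then evaluates the minimum at the arbitrary point $z\in\X$. Your added remarks on why smoothness applies between $x_t$ and $x_{t+1}$ and on the degenerate case $g(z)=+\infty$ are fine but not needed beyond what the paper already establishes.
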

\begin{proof}
    The proof follows from $\circled{2}$ in \cref{eq:crgd-convergence} by noting that $\min_{x\in \X}F(x)+\frac{\L }{2}\dist(x,x_t)^2\le F(z)+\frac{\L }{2}\dist(z,x_t)^2$ for all $z\in \X$.
\end{proof}
\begin{corollary}\label{cor:crgd-nonexp}
  Let $\M$ be a finite-dimensional uniquely geodesic Riemannian manifold. Further let $f$ be $\L $-smooth and g-convex in $\X$ and let $g$ be g-convex, lower semicontinuous and proper in $\X$ such that $F\defi f+g$ is $\mu$-strongly g-convex in $\X$, where $\X\defi \ball(x^{*},(1+c)\dist(x,x^{*}))$ for some $c>0$ and $x^{*}\defi \argmin_{x\in\M}F(x)$. Let
  \begin{equation*}
    x^+\defi \argmin_{y\in \M} \left\{  \innp{\nabla f(x),\exponinv{x}(y)}+\frac{\L }{2}\dist(x,y)^2+g(y)\right\}.
  \end{equation*}
  Then, if $\L /\mu\le 1$
  \begin{equation*}
    \dist(x^+,x^{*})^2\le \frac{\L }{\mu}\dist(x,x^{*})^2\le \dist(x,x^{*})^2.
  \end{equation*}
\end{corollary}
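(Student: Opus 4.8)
The plan is to combine the one-step ``warm start'' guarantee for composite Riemannian gradient descent (\cref{cor:composite-warm-start}) with the $\mu$-strong g-convexity of $F$ at its global minimizer $x^\ast$. Two structural observations set this up: first, $x^\ast$ is the center of the ball $\X = \ball(x^\ast,(1+c)\dist(x,x^\ast))$, so $x^\ast\in\X$; second, $\X$ is a closed metric ball of a finite-dimensional uniquely geodesic manifold, hence compact and g-convex, so the pair $(f,g)$ on $\X$ fits the hypotheses of \cref{fact:crgd-convergence}/\cref{cor:composite-warm-start}.

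First I would apply \cref{cor:composite-warm-start} with $z\gets x^\ast\in\X$ to the step $x^+$, obtaining
\begin{equation*}
F(x^+)-F(x^\ast)\le \frac{\L}{2}\dist(x,x^\ast)^2 .
\end{equation*}
Second I would lower bound the left-hand side: since $x^\ast=\argmin_{y\in\M}F(y)$ we have $0\in\partial F(x^\ast)$, so the $\mu$-strong g-convexity of $F$ in $\X$ (applied between $x^\ast$ and $x^+$, both of which lie in $\X$) gives
\begin{equation*}
F(x^+)-F(x^\ast)\ge \frac{\mu}{2}\dist(x^+,x^\ast)^2 .
\end{equation*}
Chaining the two displays yields $\dist(x^+,x^\ast)^2\le \tfrac{\L}{\mu}\dist(x,x^\ast)^2$, and the hypothesis $\L/\mu\le 1$ then gives $\dist(x^+,x^\ast)^2\le \dist(x,x^\ast)^2$, which is the claim.

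The main obstacle, and the only place real care is needed, is that the hypotheses (smoothness, g-convexity, strong g-convexity) hold only on $\X$, whereas $x^+$ is defined by a minimization over all of $\M$; both steps above require $x^+\in\X$. I would handle this by a short bootstrap: run the very same argument with the minimization defining $x^+$ restricted to the compact g-convex set $\X$ (so that $x^+\in\X$ by construction and \cref{cor:composite-warm-start} applies verbatim), conclude $\dist(x^+,x^\ast)\le\dist(x,x^\ast)<(1+c)\dist(x,x^\ast)$, and observe that this places $x^+$ strictly in the interior of $\X$, so the constraint is inactive and the restricted minimizer coincides with the unrestricted one. Everything else is a direct invocation of results already proved in the excerpt, so no lengthy computation is involved.
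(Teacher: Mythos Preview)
Your proposal is correct and follows essentially the same approach as the paper: define the constrained minimizer over $\X$, apply \cref{cor:composite-warm-start} with $z=x^\ast$ together with the $\mu$-strong g-convexity of $F$ to obtain $\dist(\bar x^+,x^\ast)^2\le(\L/\mu)\dist(x,x^\ast)^2$, and then observe that this places the constrained minimizer strictly inside $\X$ so the constraint is inactive and it coincides with the unconstrained $x^+$. The only cosmetic difference is that the paper introduces $\bar x^+$ up front rather than describing the bootstrap after the main chain.
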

\begin{proof}
  Let the following be the optimizer, constrained to $\X$,
  \begin{equation*}
    \bar{x}^+\defi \argmin_{y\in \X} \left\{  \innp{\nabla f(x),\exponinv{x}(y)}+\frac{\L }{2}\dist(x,y)^2+g(y)\right\}.
  \end{equation*}
  Then we have
  \begin{equation*}
    \dist(\bar{x}^+,x^{*})^2\circled{1}[\le] \frac{2}{\mu}(F(\bar{x}^+)-F(x^{*}))\circled{2}[\le] \frac{\L }{\mu}\dist(x,x^{*})^2,
  \end{equation*}
  where $\circled{1}$ hold by $\mu$-strong g-convexity of $F$ in $\X$ and $\circled{2}$ follows by \cref{cor:composite-warm-start}.
  Hence $\bar{x}^+$ lies in the interior of $\X$, which implies that the constraints are inactive and $x^+=\bar{x}^+$.
\end{proof}

\section{Experiments} \label{sec:experiments}
We consider a robust version of the Karcher mean \citep{karcher1977riemannian} for points $y_1,\ldots,y_n\in \M$. Previous works controlled the degree of robustness through regularization \citep{zhang2023sion,jordan2022first}, i.e.,
\begin{equation}
  \label{eq:KM-regu}
  \min_{x\in \M}\max_{\tilde{y}_i\in \M} \left\{  F(x,(y_1,\ldots,y_n))\defi\frac{1}{n}\sum_{i=1}^n\dist(x,\tilde{y}_i)^2- \frac{\gamma}{n} \sum_{i=1}^n\dist(y_i,\tilde{y}_i)^2\right\},
\end{equation}
where $\gamma$ controls the amount of robustness. We formulate a robust Karcher variant based on constraints, i.e.,
\begin{equation}
  \label{eq:KM-constraints}
  \min_{x\in \M}\max_{\tilde{y}_i\in \Y_i} F(x,(y_1,\ldots,y_n)),
\end{equation}
where $\Y_i \defi \ball(y_i,\bar{R})$ for all $i\in [n]$. This formulation allows for a more fine-grained influence of the robustness via the radius $\bar{R}$ of the constraints balls.

We implement the experiments using the Pymanopt Library \citep{townsend2016pymanopt} in the symmetric positive definite (SPD) manifold $\mathcal{S}_+^d\defi\{M\in \mathbb{R}^{d\times d}: M = M^T, M\succ 0\}$ equipped with the affine-invariant metric and the $d$-dimensional hyperbolic space $\mathbb{H}^d$.
We measure the performance of point $(\hat{x},(\hat{y}_1,\ldots,\hat{y}_n))$ in terms of the duality gap
\begin{equation*}
 \max_{\tilde{y}_i\in \Y_i} F(\hat{x},(y_1,\ldots,y_n)) - \min_{x\in\M}F(x,(\hat{y}_1,\ldots,\hat{y}_n)).
\end{equation*}

Note that for both \cref{eq:KM-regu,eq:KM-constraints}, we require $\gamma>\zeta_{\!\bar{D}}$, where $\bar{D}$ is the diameter a the set containing $x$ and $\tilde{y}_i$, in order to ensure that the problem is g-convex, g-concave. In the following, we show that it is sufficient to choose $\bar{D}\defi 1+ \bar{R}$ based on how we generate instances of \cref{eq:KM-constraints}. First, we generate a random point $\bar{y}$ on the manifold using the \verb|manifold.random_point()| function. Then, we generate the centers $y_i=\exponinv{\bar{y}}(v_i/\norm{v_i}_{\bar{y}})$ based on sampling random tangent vectors $v_i\in T_{\bar{y}}\M$ using the \verb|manifold.random_tangent_vector()| function. That way, we know that all $y_i\in \ball(\bar{y},1)$. This ensures that the Karcher mean of the points $y_1,\ldots, y_{n}$, i.e., $\text{KM}(y_1,\ldots,y_n)=\argmin_{x\in\M}\frac{1}{2n}\sum_{i=1}^n\dist(x,y_i)^2$ also lies in $\ball(\bar{y},1)$ by \citet[Proposition 30]{martinez2024convergence}.
Since we constrain the variables $\tilde{y}_{i}$ to lie in $\ball(y_i,\bar{R})$, we have that $\tilde{y}_i\in \ball(\bar{y},1+\bar{R})$ and it follows that $\text{KM}(\tilde{y}_1,\ldots,\tilde{y}_n)$ also lies in $\ball(\bar{y},1+\bar{R})$ by \citet[Proposition 30]{martinez2024convergence}.

We have that $F(\cdot,(y_1,\ldots,y_n))$ is $1$-strongly g-convex and $(\gamma-\zeta_{\!\bar{D}})$-strongly g-concave, for $\gamma> \zeta_{\!\bar{D}}$. That means that we can ensure in particular that the problem is strongly g-convex and strongly g-concave.

We run \RIODA{}\textsubscript{\PRGD{}} on \cref{eq:KM-constraints} with a fixed number of $3$ \PRGD{} steps per subroutine, which means that each iteration of \RIODA{}\textsubscript{\PRGD{}} require $12$ \PRGD{} steps.
Setting $\bar{R}=0.01$ and $\gamma=\zeta_{\!\bar{D}}$ ensures that the problem is strongly g-concave in $(y_1,\ldots,y_n)$ as our bound on $\bar{D}$ is loose. For the experiments, \RIODA{}\textsubscript{\PRGD{}} is run for $1k$ iterations, corresponding to $12$k gradient oracle calls. The step size $\lambda=\{10^{-1},10^{-2},10^{-3}\}$ of \PRGD{} and the proximal parameter $\eta\in \{10^{-1},10^{-2}\}$ are optimized to find the best hyperparameters via a grid search.

The following two figures show the convergence behavior of \RIODA{}\textsubscript{\PRGD{}} in terms of the duality gap for experiments run in both the hyperbolic space $\mathbb{H}^{5000}$ and the SPD manifold $\mathcal{S}_+^{100}$, each with $n=50$ points. We observe linear convergence in both cases, which aligns with our theoretical analysis.

\begin{figure}[t]
  \centering
  \includegraphics[width=0.8\textwidth]{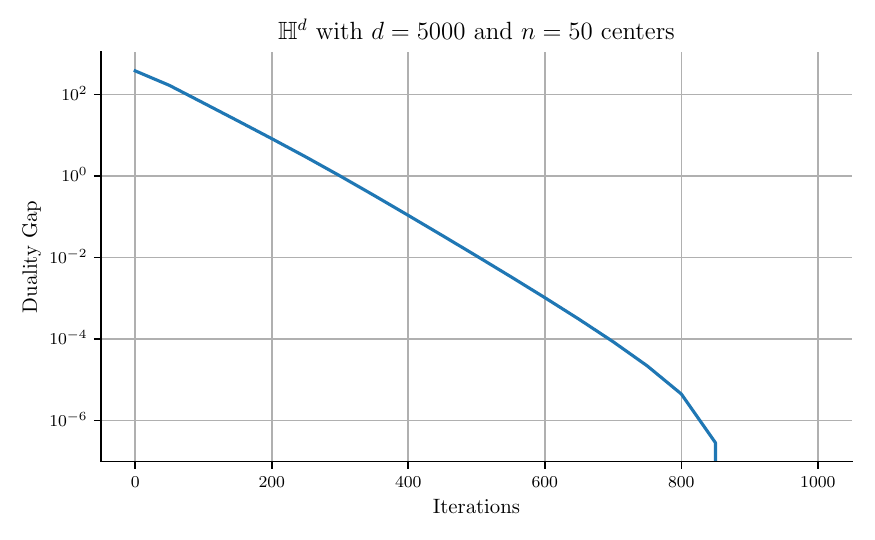}
  \caption{Convergence of \RIODA{}\textsubscript{\PRGD{}} on the robust Karcher mean problem \cref{eq:KM-constraints} in terms of the duality gap with \(\lambda=0.01\), \(\eta=0.01\)}
\end{figure}
\begin{figure}[t]
  \centering
  \includegraphics[width=0.8\textwidth]{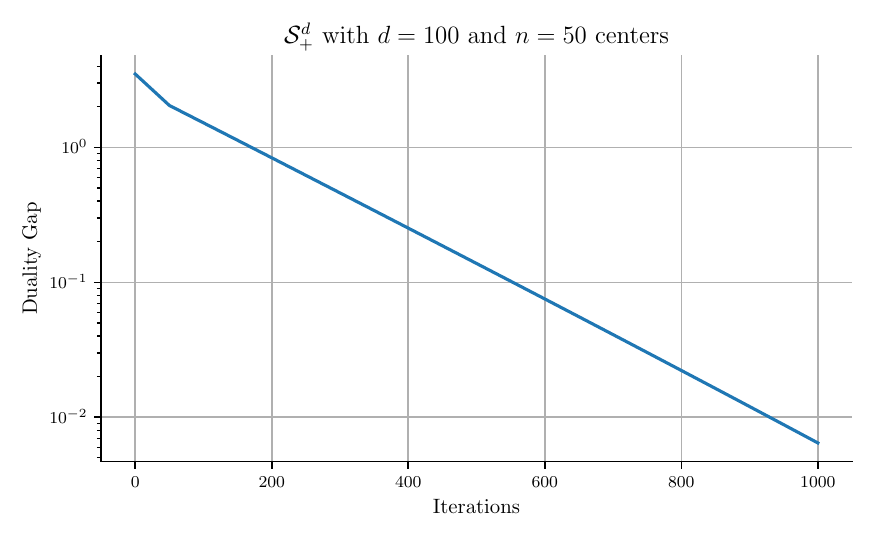}
  \caption{Convergence of \RIODA{}\textsubscript{\PRGD{}} on the robust Karcher mean problem \cref{eq:KM-constraints} in terms of the duality gap with \(\lambda=0.1\), \(\eta=0.0001\)}
\end{figure}

\end{document}